    \newtheorem{thm}{Theorem}[section]
    \newtheorem{cor}[thm]{Corollary}
    \newtheorem{prop}[thm]{Proposition}
    \newtheorem{lem}[thm]{Lemma}
        \newtheorem{lemdef}[thm]{Lemma/Definition}
    \theoremstyle{definition}
    \newtheorem{defn}[thm]{Definition}
    \theoremstyle{remark}
    \newtheorem{rem}[thm]{Remark}
    \newcommand{\Z}{\mathbb{Z}}
    \newcommand{\R}{\mathbb{R}}
    \newcommand{\C}{\mathbb{C}}
    \newcommand{\Ecal}{\mathcal{E}}
    \newcommand{\Fcal}{\mathcal{F}}
    \newcommand{\Ical}{\mathcal{I}}
    \newcommand{\Lcal}{\mathcal{L}}
    \newcommand{\Ocal}{\mathcal{O}}
    \newcommand{\Pcal}{\mathcal{P}}
    \newcommand{\Qcal}{\mathcal{Q}}
    \newcommand{\Ucal}{\mathcal{U}}
    \newcommand{\Wcal}{\mathcal{W}}
    \newcommand{\Xcal}{\mathcal{X}}
    \newcommand{\Zcal}{\mathcal{Z}}
    \newcommand{\Pbb}{\mathbb{P}}
    \newcommand{\Qbb}{\mathbb{Q}}
    \newcommand{\Esc}{\mathscr{E}}
    \newcommand{\Fsc}{\mathscr{F}}
    \newcommand{\Isc}{\mathscr{I}}
    \newcommand{\Usc}{\mathscr{U}}
    \newcommand{\abf}{\mathbf{a}}
    \newcommand{\bbf}{\mathbf{b}}
    \newcommand{\vbf}{\mathbf{v}}
    \newcommand{\Bl}{\mathrm{Bl}}
    \newcommand{\Coh}{\mathrm{Coh}}
    \newcommand{\NS}{\mathrm{NS}}
    \newcommand{\Supp}{\mathrm{Supp}}
     \newcommand{\supp}{\mathrm{supp}}
    \newcommand{\Hom}{\mathrm{Hom}}
    \newcommand{\RHom}{R\mathcal{H}om}
    \newcommand{\Ext}{\mathrm{Ext}}
    \newcommand{\RExt}{\mathcal{E}xt}
    \newcommand{\ch}{\mathrm{ch}}
    \newcommand{\Pic}{\mathrm{Pic}}
    \newcommand{\hilb}{S^{[10]}}
    \newcommand{\BN}{\mathrm{BN}}
    \newcommand{\Mov}{\mathrm{Mov}}
    \newcommand{\Halg}{H^*_{\mathrm{alg}}}
    \newcommand{\Stab}{\mathrm{Stab}}
    \newcommand*{\rom}[1]{\expandafter\@slowromancap\romannumeral #1@}
    \let\c@equation\c@thm
    \numberwithin{equation}{section}
\title[Birational geometry of Beauville-Mukai systems I]{Birational geometry of Beauville-Mukai systems I: the rank three and genus two case}
\author{Xuqiang Qin and Justin Sawon}
\address{Department of Mathematics, University of North Carolina, Chapel Hill, NC 27514, USA}
\email{qinx@unc.edu, sawon@email.unc.edu}
\date{July 2022}
\subjclass[2020]{14D20, 14J28, 14J42, 14F08, 14E30, 14H51}
\keywords{Beauville-Mukai systems, Hilbert schemes, birational models, wall-crossing, Brill-Noether loci}
\pgfplotsset{compat=1.16}
\begin{document}

\begin{abstract}
We study wall-crossing for the Beauville-Mukai system of rank three on a general genus two K3 surface. We show that such a system is related to the Hilbert scheme of ten points on the surface by a sequence of flops, whose exceptional loci can be described as Brill-Noether loci. We also obtain Brill-Noether type results for sheaves in the Beauville-Mukai system.
\end{abstract}

\maketitle

\tableofcontents

\section{Introduction}

Irreducible holomorphic symplectic manifolds are generalizations of K3 surfaces. The first higher-dimensional example was discovered by Fujiki~\cite{Fuj83} (the Hilbert scheme $S^{[2]}$ of two points on a K3 surface $S$), and soon generalized by Beauville~\cite{Bea83} (the Hilbert scheme $S^{[n]}$ of $n\geq 2$ points on $S$). An interesting question is: when is an irreducible holomorphic symplectic manifold birational to a Lagrangian fibration, i.e., a fibration by complex tori that are Lagrangian with respect to the holomorphic symplectic form? The conjectural solution has come to be known as the Hyperk{\"a}hler SYZ Conjecture (see~\cite{Huy03,Saw03,Ver10}). For Hilbert schemes $S^{[n]}$ this question was considered by Markushevich~\cite{Marku06} and the second author~\cite{Saw07}, and a complete answer was given by Bayer and Macr{\`i}~\cite{BM14a,BM14b}, verifying the Hyperk{\"a}hler SYZ Conjecture in this case (see also \cite{Bea99}, \cite{Fu03}, \cite{HT00}, and \cite{IR07} for some special cases). The Lagrangian fibration that arises in this case is the compactified relative Jacobian of a complete linear system of curves on a K3 surface, and is known as the Beauville-Mukai integrable system~\cite{Muk84,Bea91}. In fact, Markushevich~\cite{Marku02} conjectured that any Lagrangian fibration whose fibres are Jacobians of curves must be of this type, i.e., the family of curves must be a complete linear system on a K3 surface. He proved this for genus two curves~\cite{Marku96}, and the second author proved this for genus three, four, and five~\cite{Saw15b}, and for all genera under an additional assumption~\cite{Saw15a}.

In the simplest cases, the Hilbert scheme $S^{[n]}$ is actually isomorphic to a Beauville-Mukai system. However, in most cases they are only birational. The birational map can take the form of a Mukai flop~\cite{Muk84}, or a more complicated stratified elementary transform, as studied by Markman~\cite{Mar01}. The goal of this paper is to understand the birational map in a particular case. 

We start with a general polarized K3 surface $(S,H)$ of genus two, i.e., $\pi: S\to \Pbb^2$ is a double cover ramified over a general smooth sextic curve, $H=\pi^*(\Ocal_{\Pbb^2}(1))$, and $\Pic(S)=\Z [H]$. By the results mentioned above, \cite{Marku06,Saw07,BM14a,BM14b}, for such a K3 surface the Hilbert scheme $S^{[n]}$ is birational to a Beauville-Mukai system if and only if $n=m^2+1$ is a perfect square plus one. Here and throughout Beauville-Mukai systems will be identified with Mukai moduli spaces $M(0,m,k)$ of stable sheaves on $S$ (semistable if $m$ and $k$ are not coprime). Recall that the general element of $M(0,m,k)$ looks like $\iota_*L$ where $\iota:C\hookrightarrow S$ is the inclusion of a curve $C\in |mH|$ into the K3 surface, and $L$ is a line bundle on $C$ of degree $k+m^2$. The Fitting support gives a morphism $M(0,m,k)\rightarrow |mH|\cong\Pbb^{m^2+1}$, and $M(0,m,k)$ is a compactification of the relative Jacobian $\mathrm{Pic}^{k+m^2}(\mathcal{C}/|mH|)$ of the family of curves in the linear system $|mH|$. The birational map
$$S^{[m^2+1]}\dashrightarrow M(0,m,-1)$$
is given by taking a general length $m^2+1$ subscheme $\xi$ of $S$ (consisting of $m^2+1$ distinct points) to the line bundle
$$\mathcal{O}_C(-\xi)\otimes\mathcal{O}_S(mH)|_C$$
on the (unique) curve $C\in |mH|$ passing through the $m^2+1$ points. At the level of the derived category of $S$, this birational map is induced by the autoequivalence $\Phi:\mathrm{D}^b(S)\rightarrow\mathrm{D}^b(S)$ given by the composition of the spherical twist $T_{\mathcal{O}_S(-mH)}$ and tensoring with the line bundle $\mathcal{O}_S(mH)$.

When $m=1$, and thus $n=2$, Mukai~\cite[Example 0.6]{Muk84} observed that the birational map
$$S^{[2]}\dashrightarrow M(0,1,-1)$$
is simply the elementary transform which flops the plane in $S^{[2]}$ parametrizing subschemes $\pi^{-1}(p)$, $p\in\Pbb^2$, to the zero section in $M(0,1,-1)\rightarrow |H|$ parametrizing trivial line bundles $\mathcal{O}_C$. When $m=2$, and thus $n=5$, the birational map
$$S^{[5]}\dashrightarrow M(0,2,-1)$$
was studied by Hellmann~\cite{Hel20}. She decomposed it into flops.
\begin{thm}\cite[Theorem 5.2]{Hel20}
There are five smooth $K$-trivial birational models of $S^{[5]}$, or $M:=M(0,2,-1)$. They are connected by a chain of flopping contractions
\[ \xymatrix@R-1pc@C-2pc{
& \Bl_{W_2}S^{[5]} \ar[dr]\ar[dl] && \Bl_{\tilde{W}_3}X_1\ar[dr]\ar[dl] && \Bl_{\tilde{Z}_3}X_3\ar[dr]\ar[dl] && \Bl_{Z_1}M\ar[dr]\ar[dl]\\
S^{[5]} \ar@{-->}[rr]^{g_1} && X_1 \ar@{-->}[rr]^{g_2} && X_2 \ar@{<--}[rr]^{g_3} &&  X_3 \ar@{<--}[rr]^{g_4} &&M.} \]
\end{thm}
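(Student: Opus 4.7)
\medskip

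\noindent\textbf{Proof proposal.} The plan is to apply the Bayer--Macr\`i machinery \cite{BM14a,BM14b} to the Mukai vector $\vbf=(1,0,-4)\in\Halg(S)$, whose Gieseker moduli space is $S^{[5]}$. The target moduli space $M(0,2,-1)$ corresponds to the Mukai vector $\vbf'=(0,2,-1)=\Phi_*\vbf$, where $\Phi=(-\otimes\Ocal_S(2H))\circ T_{\Ocal_S(-2H)}$ is the autoequivalence described in the introduction. First I would fix a two--dimensional slice of $\Stab(S)$ (the standard $(\alpha,\beta)$--slice suffices because $\Pic(S)=\Z[H]$ is of rank one) containing a Bridgeland stability condition $\sigma_0$ in the Gieseker chamber for $\vbf$ and a Bridgeland stability condition $\sigma_\infty$ in the chamber whose moduli space is $M(\vbf')$ (after applying $\Phi$). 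Connecting $\sigma_0$ to $\sigma_\infty$ by a path, I would then enumerate the walls crossed.

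The enumeration of walls is the arithmetic heart of the argument. A potential wall is determined by a rank--two primitive sublattice $\Hcal\subset\Halg(S)$ containing $\vbf$, and it is an actual wall only when there exists a Mukai vector $\abf\in\Hcal$ with $\abf^2\geq -2$, $(\vbf-\abf)^2\geq -2$, and $0<\ch_1^\beta(\abf)<\ch_1^\beta(\vbf)$ along the wall. Using the intersection form on the genus--two K3 lattice $\Halg(S)=\Z\langle (1,0,0),(0,H,0),(0,0,1)\rangle$ with $H^2=2$ and the constraint $\vbf^2=8$, I would run through the finitely many candidate destabilizing classes $\abf=(r,cH,s)$ satisfying the numerical constraints. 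Each candidate produces a semicircular wall in the $(\alpha,\beta)$--plane, and collating them I expect to recover exactly four numerical walls, giving four flops and hence five birational models.

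At each of the four walls I would then identify the destabilizing short exact sequence and the resulting Brill--Noether type locus of objects that become strictly semistable. Concretely, for a wall with destabilizer $\abf$, the exceptional locus on the two adjacent models consists of extensions of the stable objects of classes $\abf$ and $\vbf-\abf$; both sides are projective bundles (or more generally, Brill--Noether strata parametrizing sheaves with prescribed $\Hom$--jumps), and the wall--crossing replaces one by the other. The loci $W_2,\tilde W_3,\tilde Z_3,Z_1$ arise this way. To confirm that each wall is a flopping wall (as opposed to a divisorial contraction or a fake wall), I would check that neither $\abf$ nor $\vbf-\abf$ is isotropic with $\vbf\cdot\abf=0$ and that the wall is not a Hilbert--Chow type wall, which amounts to the numerical inequalities $\abf^2>0$ and $(\vbf-\abf)^2>0$ or the corresponding spherical/isotropic test in \cite{BM14b}. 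Finally, the common blow--ups $\Bl_{W_2}S^{[5]}$, $\Bl_{\tilde W_3}X_1$, $\Bl_{\tilde Z_3}X_3$, $\Bl_{Z_1}M$ are identified as the moduli spaces sitting on the corresponding walls.

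The main obstacle I anticipate is the bookkeeping at the two middle walls, where the destabilizing objects involve twists of the spherical object $\Ocal_S(-2H)$ or $\Ocal_S$ and the Brill--Noether description of the loci $\tilde W_3, \tilde Z_3$ is the most delicate; ruling out spurious walls and giving a clean geometric description of these extension loci (in particular, verifying that the exceptional loci are smooth and that the contractions are genuinely small) is where the proof will require the most care. Everything else is a controlled computation in $\Halg(S)$ combined with the general wall--crossing dictionary of \cite{BM14b}.
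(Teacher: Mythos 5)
Your proposal is sound and follows essentially the same route as the source: this statement is quoted in the paper from Hellmann \cite[Theorem 5.2]{Hel20} without proof, and both Hellmann's argument and the paper's own proof of the analogous rank-three result (Theorem~\ref{main}) proceed exactly as you describe — enumerate the spherical/isotropic destabilizing classes $\abf$ for $\vbf=(1,0,-4)$ via the numerical criteria of \cite[Theorems 12.1, 12.3]{BM14b}, cross the resulting four flopping walls partly from the $S^{[5]}$ side and partly from the $M(0,2,-1)$ side, identify the exceptional loci as Brill--Noether loci via the destabilizing extensions, and glue the two analyses in the middle using the autoequivalence $\Phi$. The only caveat is that your stated test for flopping versus divisorial walls ("$\abf^2>0$ and $(\vbf-\abf)^2>0$") is not the right criterion — the destabilizers here are typically spherical, e.g.\ $\abf=(1,-1,2)$ with $\abf^2=-2$ — but you correctly defer to the precise classification in \cite{BM14b}, which is what is actually used.
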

The exceptional loci $W_2\subset W_3\subset S^{[5]}$ and $Z_1\subset Z_3\subset M$ are described explicitly in~\cite{Hel20}; they are either Brill-Noether loci or irreducible components of Brill-Noether loci.

Our paper is greatly inspired by Hellmann's results, and may be seen as an extension to the `next' case. Namely, we consider the case $m=3$, and thus $n=10$, and we decompose the birational map
$$S^{[10]}\dashrightarrow M(0,3,-1)$$
into flops.
\begin{thm}
Let $(S,H)$ be a general polarized K3 surface with $\Pic(S)=\Z [H]$ and $H^2=2$. There are eleven smooth $K$-trivial birational models of $S^{[10]}$, or $M\coloneqq M(0,3,-1)$. They are connected by a chain of flopping contractions
\[ \xymatrix@R-1pc@C-2pc{
& \Bl_{W_0}S^{[10]} \ar[dr]\ar[dl] && \Bl_{\tilde{W}_1}X_1\ar[dr]\ar[dl] && \Bl_{\tilde{W}_2}X_2\ar[dr]\ar[dl] && \Bl_{\tilde{W}_3}X_3\ar[dr]\ar[dl] && \Bl_{\widetilde{W_4\cup \Wcal_0}}X_4\ar[dr]\ar[dl] && \Bl_{\tilde{\Wcal_1}}X_5\ar[dr]\ar[dl]\\
S^{[10]} \ar@{-->}[rr]^{g_0} && X_1 \ar@{-->}[rr]^{g_1} && X_2 \ar@{-->}[rr]^{g_2} && X_3 \ar@{-->}[rr]^{g_3} &&  X_4 \ar@{-->}[rr]^{g_4} &&X_5\ar@{-->}[rr]^{g_5} &&X_6\xrightarrow{\Phi}} \]
\[ \xymatrix@R-1pc@C-2pc{
&  \Bl_{\tilde{Z}_8}X_7\ar[dr]\ar[dl] && \Bl_{\tilde{Z}_{1,3}}X_8\ar[dr]\ar[dl] && \Bl_{\tilde{Z}_4}X_9\ar[dr]\ar[dl] && \Bl_{Z_2}M \ar[dr]\ar[dl]\\
 \xrightarrow{\Phi}X_6' && X_7 \ar@{-->}[ll]^{g_6} && X_8 \ar@{-->}[ll]^{g_7} &&  X_9 \ar@{-->}[ll]^{g_8} &&M. \ar@{-->}[ll]^{g_{9}}} \]
\end{thm}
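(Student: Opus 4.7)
The plan is to apply the Bridgeland stability framework and the wall-crossing machinery of Bayer-Macr\`i \cite{BM14a,BM14b}, which governs the birational geometry of moduli spaces of sheaves on a K3 surface. Both $\hilb$ and $M$ arise as Bridgeland moduli spaces $M_\sigma(v)$ and $M_\tau(v')$ for appropriate generic stability conditions, where $v=(1,0,-9)$ and $v'=(0,3H,-1)$. Since the autoequivalence $\Phi$ from the introduction acts on $\Stab(S)$ and sends $v$ to $v'$, it suffices to trace a path in $\Stab(S)$ from the Gieseker chamber for $v$ to a chamber whose image under $\Phi_*$ is the Gieseker chamber for $v'$. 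The ten walls crossed along this path will yield the ten flops of the statement, while the middle isomorphism $X_6\cong X_6'$ is supplied by $\Phi$ itself.

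The first concrete step is to fix a convenient two-dimensional slice of $\Stab(S)$, for instance the $(\alpha,\beta)$-plane of tilt stability conditions, and to enumerate the walls for $v$ lying on the path. Since $\Pic(S)=\Z[H]$ with $H^2=2$, every algebraic Mukai vector has the form $w=(r,dH,s)$ and the Mukai pairing takes the form $\langle(r_1,d_1H,s_1),(r_2,d_2H,s_2)\rangle=2d_1d_2-r_1s_2-r_2s_1$. By \cite{BM14b}, a wall for $v$ is determined by a primitive rank-two sublattice of the algebraic Mukai lattice containing $v$, spanned by a destabilizing class $w$ with $\langle w,w\rangle\geq -2$ satisfying the usual numerical constraints. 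A finite case analysis then isolates the six walls to be crossed on the $\hilb$ side and the four on the $M$ side, and their ordering is forced by the requirement that the path start, resp.\ end, in the Gieseker chamber.

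At each wall one identifies the Jordan-H\"older factors of the strictly semistable objects and describes the resulting exceptional locus. The expectation, to be verified case by case, is that these loci are Brill-Noether loci of sheaves admitting extra maps from, or to, a spherical or isotropic object supplied by the destabilizing class $w$. Working out the destabilizing short exact sequences then identifies the subvarieties $W_0,\ldots,W_4,\Wcal_0,\Wcal_1\subset\hilb$ and $Z_2,Z_4,Z_{1,3},Z_8\subset M$ explicitly. Applying the classification of walls in \cite{BM14b} shows that each wall is of flopping rather than divisorial type, and that the blow-up centres $\tilde W_i$, $\tilde\Wcal_j$, $\tilde Z_k$ in the intermediate models coincide with the strict transforms of the corresponding Brill-Noether strata.

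The main obstacle will be the careful enumeration of walls together with the explicit geometric identification of the destabilizing extensions with the stated Brill-Noether loci. In particular, the wall whose exceptional locus is the reducible union $W_4\cup\Wcal_0$ will require analysing two distinct types of Jordan-H\"older filtrations sharing a common wall, and one must check that the union is resolved by a single common blow-up compatible with the flops meeting at $X_4$ and $X_5$. A secondary technical point is verifying the smoothness and projectivity of each intermediate model $X_i$, but this follows from the general Bayer-Macr\`i framework once the wall analysis is complete.
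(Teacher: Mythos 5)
Your strategy coincides with the paper's: enumerate the numerical walls for $\vbf=(1,0,-9)$ and $\vbf'=(0,3,-1)$ via the Bayer--Macr\`i criteria, cross six walls from the $S^{[10]}$ side and four from the $M$ side, and glue the two chains at $X_6\cong X_6'$ via $\Phi$. As a proof, however, the proposal leaves unaddressed several points that do not simply ``follow from the general framework.'' First, to conclude that there are \emph{exactly eleven} birational models you need more than the list of numerical wall classes: you must show that every wall of $\Mov(S^{[10]})$ is realized by a wall for $\vbf$ that your path actually crosses, i.e.\ a bijection between walls in $\mathrm{Stab}^\dagger(S)$ and walls in the movable cone. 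The paper proves this separately (Proposition~\ref{WallBijection}), by running the path $\sigma_{-3+\epsilon,y}$ with $\epsilon$ a small irrational over the full range $0<y<\infty$ and arguing as in \cite[Theorem 10.8]{BM14a}; it notes explicitly that this statement is not available off the shelf. Second, the gluing in the middle is not automatic: one must verify that $\Phi_*$ of the stability condition defining $X_6$ is again geometric (equivalently, that all skyscraper sheaves are $\Phi_*\sigma$-stable) and that it lands in the chamber between $\Gamma=\tfrac{4}{13}$ and $\Gamma=\tfrac{6}{19}$ on the $M$ side; this is what identifies $M_{\Phi_*\sigma}(0,3,-1)$ with $X_6'$ rather than another model. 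Third, the $6+4$ split is forced, not chosen for convenience: the walls with $\Gamma\ge\tfrac{8}{25}$ have radius $<1$ in the $(x,y)$-plane on the $S^{[10]}$ side and so cannot be reached by the stability conditions $\sigma_{x,y}$ with $y>1$, which is why they must be analysed from the $M$ side.

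Beyond these structural points, the substantive content of the theorem --- the identification of each exceptional locus with the specific loci $W_i$, $\Wcal_j$, $Z_k$ and their projective-bundle descriptions --- is exactly the part you defer, and it occupies Sections~\ref{BNloci} and~5 of the paper: pinning down the Jordan--H\"older factors at each wall requires the auxiliary wall-crossing results for $S^{[2]}$, $S^{[4]}$, $S^{[8]}$ and the rank-one and rank-two torsion moduli spaces, together with a check that no totally semistable walls occur along the chosen paths. You correctly anticipate the two-component exceptional locus $W_4\cup\Wcal_0$ at the fourth wall, but note that the fifth and sixth walls also carry nontrivial stratifications (coming from the refined decompositions $(1,-2,4)+(1,-2,4)+(-1,4,-17)$ and $(1,0,1)+(1,0,1)+(-2,3,-3)$), so those flops are stratified elementary modifications as well and require the same care.
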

This is restated in greater detail as Theorem~\ref{main}. In particular, the exceptional loci $W_0 \subset W_1\subset\cdots\subset W_4 \subset S^{[10]}$, $\Wcal_0\subset\Wcal_1\subset \hilb$, $Z_2 \subset Z_4 \subset Z_8\subset M$, and $Z_2\subset Z_{1,3}\subset Z_8\subset M$ are described explicitly there; they are once again Brill-Noether loci or irreducible components of Brill-Noether loci.

To prove this theorem, we employ the powerful techniques developed by Bayer and Macr{\`i}~\cite{BM14a,BM14b} for determining the birational models of a moduli space. The space of Bridgeland stability conditions on $S$ has a wall-and-chamber structure. Computing the walls typically requires some work, as it involves solving Diophantine equations, though in our case these reduce to certain Pell's equations that have already been carefully analyzed by Cattaneo~\cite{Cat19}. For the Hilbert scheme $S^{[10]}=M(1,0,-9)$, we consider moduli spaces $M_{\sigma}({\bf v})$ of $\sigma$-stable objects with Mukai vector ${\bf v}=(1,0,-9)$. If we vary $\sigma$ in a single chamber the $\sigma$-stable objects do not change and the moduli spaces are isomorphic; one of the chambers corresponds to the moduli space of Gieseker stable sheaves, i.e., $S^{[10]}$. When $\sigma$ crosses a wall some $\sigma$-stable objects become unstable, while new $\sigma$-stable objects appear. One can calculate these loci of destabilized objects in the moduli spaces, and thus arrive at a precise description of the resulting birational modification of the moduli space. In our case, there are six walls to cross to reach $X_6$ from $S^{[10]}$. Similarly, for the Mukai vector $(0,3,-1)$ there is a chamber corresponding the Beauville-Mukai system $M:=M(0,3,-1)$, and crossing four walls takes us to $X^{\prime}_6$. Finally, the isomorphism $\Phi:X_6\rightarrow X^{\prime}_6$ is induced the autoequivalence $\Phi$ of the same name mentioned earlier, which takes $\sigma$-stable objects of Mukai vector $(1,0,-9)$ to $\Phi_*(\sigma)$-stable objects of Mukai vector $\Phi_*(1,0,-9)=(0,3,-1)$. For a suitable choice of $\sigma$, we have $X_6=M_{\sigma}(1,0,-9)$ and $X_6'=M_{\Phi_*(\sigma)}(0,3,-1)$.

In some respects, our result is quite similar to Hellmann's. We use the same strategy of analyzing the wall-crossings and birational modifications from both the Hilbert scheme and the Beauville-Mukai system sides, eventually reaching birational models ($X_6$ and $X_6'$, respectively) that can be identified by an isomorphism coming from the autoequivalence $\Phi$. Moreover, as in Hellmann's analysis, all of the walls that we cross are `rank one' (see Remark~\ref{rank_one_walls}), which makes it easier to describe the exceptional loci that are flopped. On the other hand, there are some differences and added complications in our work. By~\cite[Section 14]{BM14b}, the exceptional locus of a flop corresponding to a wall-crossing has a natural stratification given by the decomposition of the Mukai vector. In Hellmann's work these stratifications are always trivial and the flops are simple, or elementary modifications in the terminology of Mukai~\cite{Muk84}. By contrast, we encounter nontrivial stratifications for some wall-crossings. Specifically, the exceptional loci for our fourth, fifth, and sixth wall-crossings have nontrivial stratifications, and the resulting flops are stratified elementary modifications in the terminology of Markman~\cite{Mar01}. A priori, such loci could have several irreducible components, and indeed this occurs for the fourth wall, whose exceptional locus has two irreducible components. These differences make the analysis much more complicated; they also inspired a second paper~\cite{QS22b} where we analyze the general theory in low rank cases.

Regarding the Beauville-Mukai system, note that tensoring with $\Ocal_S(H)$ induces an isomorphism $M(0,3,k)\to M(0,3,k+6)$. Moreover, Hellmann~\cite[Proposition 2.1]{Hel20} proved that
$$\mathcal{E}\mapsto \mathcal{E}^\vee:=\mathcal{E}xt^1(\mathcal{E},\Ocal_S)$$
induces an isomorphism $M(0,3,-1)\to M(0,3,1)$. As a result, the moduli spaces $M(0,3,k)$ are all isomorphic for $k\equiv 1,5 \pmod 6$, and they are all isomorphic for $k\equiv 2,4 \pmod 6$. However, $M(0,3,-1)$ is not isomorphic, nor even birational, to $M(0,3,-2)$, as can easily be seen by applying the argument of~\cite[Proposition 15]{Saw08}. For $k\equiv 0,3 \pmod 6$ the Mukai vector $(0,3,k)$ is divisible by $3$ and the moduli space $M(0,3,k)$ is singular; moreover, Kaledin, Lehn, and Sorger~\cite{KLS06} proved that it does not admit a symplectic desingularization. Thus there are essentially two distinct smooth rank three Beauville-Mukai systems on general genus two K3 surfaces, where {\em rank three\/} is used in analogy with Hitchin systems, see Donagi, Ein, and Lazarsfeld~\cite{DEL97} (also, $M(0,3,k)$ contains sheaves $\iota_*V$ where $V$ is a rank three bundle on a curve in $|H|$). We have chosen to study the birational geometry of $M(0,3,-1)$ because it is birational to the Hilbert scheme $S^{[10]}$.

The paper is organized as follows. In Section~2 we review the preliminaries needed for the rest of the paper. In Section~3 we study the Brill-Noether loci in $\hilb$ and $M(0,3,-1)$. These loci will later be identified with the exceptional loci of wall-crossings. Section~4 contains the main results of this paper: we use the machinery in \cite{BM14b} to identify the walls and give a careful analysis of the wall-crossings. In Section~5 we collect auxiliary wall-crossing results needed for Section~4.

\subsection*{Acknowledgements}
The authors would like to thank Emanuele Macr\`i for helpful discussions. The second author gratefully acknowledges support from the NSF, grant number DMS-1555206.

\subsection*{Notation and convention}
Throughout this paper, $(S,H)$ will be a general polarized K3 surface of genus two, i.e. $\pi: S\to \Pbb^2$ is a double cover ramified over a general smooth sextic curve, with $H=\pi^*(\Ocal_{\Pbb^2}(1))$ and $\Pic(S)=\Z [H]$. We will refer to curves in the linear systems $|H|$, $|2H|$, and $|3H|$ as lines, conics, and cubics, respectively.

We use $\Halg(S,\Z)$ to denote the Mukai lattice $H^0(S,\Z)\oplus \NS(S)\oplus H^4(S,\Z)$ of $S$. We use $\mathrm{D}^b(S)$ to denote the bounded derived category of coherent sheaves on $S$.

Given $\vbf\in\Halg(S,\Z)$, we use $M_H(\vbf)$ and $M^{st}_H(\vbf)$ to denote the moduli spaces of $H$-semistable and $H$-stable sheaves with Mukai vector $\vbf$, respectively. We will often omit the subscript when no confusion will be caused. Given a Bridgeland stability condition $\sigma$, we use $M_\sigma(\vbf)$ and $M^{st}_\sigma(\vbf)$ to denote the moduli spaces of $\sigma$-semistable and $\sigma$-stable objects in $\mathrm{D}^b(S)$ with Mukai vector $\vbf$, respectively.

The Grothendieck group of a triangulated category $\mathcal{D}$ is denoted by $K(\mathcal{D})$.

Let $\Fcal$ be a coherent sheaf on a scheme $X$. We use $\Supp(\Fcal)$ to denote the Fitting support of $\Fcal$. For a closed subscheme $Z\subset X$, we use $\supp(Z)$ to denote the set-theoretic support of $Z$.

\section{Preliminaries}

\subsection{The linear systems $|H|,|2H|$ and $|3H|$.}\label{3H}
Our main object of interest is the Beauville-Mukai integral system $M(0,3,-1)\rightarrow |3H|$, which we will denote by $f:M\rightarrow B$ throughout. We refer the reader to \cite[Section 2.1]{Hel20} for a description of the linear systems $|H|$ and $|2H|$. Regarding $|3H|$, we have 
\begin{align*}
    h^0(S,\Ocal_S(3))=11=h^0(\Pbb^2,\Ocal_{\Pbb^2}(3))+1.
\end{align*}
We can think of the extra dimension as coming from the ramification locus of $S$. Let $B$ denote $|3H|$. Let $\Sigma$ denote the locus of non-integral curves in $B$. Let $\Sigma_{1,1,1}$ denote the locus of curves with three distinct irreducible components in $B$. Let $\Sigma_{2,1}$ denote the locus of curves whose support is of the form $2L_1+L_2$ where $L_i\in|H|$ and $L_1\neq L_2$. Let $\Sigma_{3}$ denote the locus of curves whose support is of the form $3L$ with $L\in|H|$. We  have a chain of closed subschemes 
$$\Sigma_3\subset(\Sigma_{2,1}\cup\Sigma_3)\subset(\Sigma_{1,1,1}\cup \Sigma_{2,1}\cup\Sigma_3)\subset \Sigma\subset B.$$
Let $x\in B$ and use $C$ to denote the corresponding curve.
\begin{itemize}
    \item For $x\in B\backslash\Sigma$, $C$ is an integral cubic curve, and then $f^{-1}(x)$ parametrizes torsion free rank one sheaves on $C$ of degree $8$.
    \item For $x\in \Sigma\backslash(\Sigma_{1,1,1}\cup\Sigma_{2,1}\cup\Sigma_3)$, $C$ is the union of an integral conic $Q$ and a line $L$. Note that $l(Q\cap L)=4$. We have
    \begin{align*}
        0\to \Ical_{Q\cap L/L}\to \Ocal_C\to \Ocal_Q\to 0
    \end{align*}
    and 
    \begin{align*}
        0\to \Ical_{Q\cap L/Q}\to \Ocal_C\to \Ocal_L\to 0.
    \end{align*}
    \item For $x\in \Sigma_{1,1,1}$, $C$ is the union of three distinct lines $L_i$, $i=1,2,3$. We have the following filtration for $\Ocal_C$.
    \[
  \xymatrix{
  0 \ar[r] & \Ical_{((L_2\cup L_3)\cap L_1)/L_1} \ar[r] \ar[d] & \Ical_{(L_3\cap(L_1\cup L_2))/(L_1\cup L_2)} \ar[r] \ar[d] & \Ocal_C \ar[d]\\
  & \Ical_{((L_2\cup L_3)\cap L_1)/L_1} \ar@{.>}[lu] & \Ical_{(L_2\cap L_3)/L_2} \ar@{.>}[lu] &  \Ocal_{L_3}.
  \ar@{.>}[lu] }
  \]
    \item For $x\in \Sigma_{2,1}$, $C$ is the union of a first infinitesimal neighbourhood of a line $L_1$ with a line $L_2\neq L_1$. We have 
    \begin{align*}
        0\to \Ical_{(2L_1\cap L_2)/L_2}\to \Ocal_C\to \Ocal_{2L_1}\to 0
    \end{align*}
    with $0\to \Ical_{L_1}/\Ical^2_{L_1}\to \Ocal_{2L_1}\to \Ocal_{L_1}\to 0$, or equivalently
    \begin{align*}
        0\to \Ical_{(2L_1\cap L_2)/2L_1}\to \Ocal_C\to \Ocal_{L_2}\to 0.
    \end{align*}
    \item For $x\in \Sigma_{3}$, $C$ is the second infinitesimal neighbourhood of a line $L$. We have
    \begin{align*}
        0\to (\Ical_{L}/\Ical^2_{L})^2\to \Ocal_C\to \Ocal_{2L}\to 0
    \end{align*}
    with $0\to \Ical_{L}/\Ical^2_{L}\to \Ocal_{2L}\to \Ocal_{L}\to 0$.
\end{itemize}

\subsection{Zero-dimensional closed subschemes}
Recall that for any closed subscheme $Y\subset S$, we use $\supp(Y)$ to denote the support of $Y$.
\begin{defn}
    Let $\xi\subset S$ be a zero-dimensional closed subscheme. We say a closed subscheme $\xi'\subset \xi$ is \it{saturated} if $\supp(\Ocal_\xi/\Ocal_{\xi'})\cap \supp(\xi')=\emptyset$.
\end{defn}
We collect some notations and easy results about saturated subschemes.
\begin{lemdef}
Let $\xi\subset S$ be a zero-dimensional closed subscheme and $\xi'\subset \xi$ be a saturated subscheme. Then
\begin{enumerate}
    \item $\Ocal_\xi=\Ocal_{\xi'}\oplus(\Ocal_\xi/\Ocal_{\xi'})$, from now on we denote the subscheme of $\xi$ corresponding to the second summand by $\xi\backslash\xi'$.
    \item $\xi\backslash\xi'\subset \xi$ is also a saturated subscheme.
    \item Suppose $\xi''\subset \xi$ is also saturated. Define $\xi'\cup\xi''$ as the saturated subscheme of $\xi$ whose support is $\supp(\xi')\cup\supp(\xi'')$.
    \item For any subscheme $\zeta\subset \xi$, $\zeta\backslash\xi'$ is  defined as the maximal subscheme of $\zeta$ whose support does not intersect $\supp(\xi')$.
\end{enumerate}
\end{lemdef}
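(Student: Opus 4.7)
The plan is to reduce each assertion to the fact that, because $\xi$ is zero-dimensional, its structure sheaf splits canonically as a finite direct sum of Artinian local rings indexed by its support:
$$\Ocal_\xi=\bigoplus_{p\in\supp(\xi)}\Ocal_{\xi,p}.$$
Any subscheme $\zeta\subset\xi$ corresponds to an ideal in $\Ocal_\xi$, and $\Ocal_\zeta$ inherits an analogous product decomposition whose support is exactly $\{p:\Ocal_{\zeta,p}\neq 0\}$. Once this bookkeeping is in place, the saturation hypothesis should unpack into a concrete condition on stalks.

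The first real step is to interpret the expression $\Ocal_\xi/\Ocal_{\xi'}$ as the ideal sheaf $\Ical=\ker(\Ocal_\xi\twoheadrightarrow\Ocal_{\xi'})$. The saturation hypothesis then states precisely that $\Ical_p=0$ for every $p\in\supp(\xi')$, which forces $\Ocal_{\xi,p}=\Ocal_{\xi',p}$ at those points. Splitting the decomposition of $\Ocal_\xi$ according to whether $p$ lies in $\supp(\xi')$ yields
$$\Ocal_\xi=\bigoplus_{p\in\supp(\xi')}\Ocal_{\xi',p}\ \oplus\ \bigoplus_{p\in\supp(\xi)\setminus\supp(\xi')}\Ocal_{\xi,p}=\Ocal_{\xi'}\oplus\Ical,$$
which proves (1) and simultaneously identifies $\xi\setminus\xi'$ with the subscheme whose structure sheaf is the second summand. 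For (2), the quotient $\Ocal_\xi/\Ocal_{\xi\setminus\xi'}$ equals $\Ocal_{\xi'}$ by construction, so it is supported on $\supp(\xi')$, which is disjoint from $\supp(\xi\setminus\xi')=\supp(\xi)\setminus\supp(\xi')$; hence $\xi\setminus\xi'$ is saturated.

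Items (3) and (4) are then immediate from the same decomposition. For (3) I would define $\xi'\cup\xi''$ to be the unique subscheme of $\xi$ whose structure sheaf is $\bigoplus_{p\in\supp(\xi')\cup\supp(\xi'')}\Ocal_{\xi,p}$; saturation follows because its quotient is supported on the complementary index set, and uniqueness follows because any saturated subscheme of $\xi$ with a prescribed support $T\subseteq\supp(\xi)$ must, by (1), have structure sheaf $\bigoplus_{p\in T}\Ocal_{\xi,p}$. For (4) one performs the analogous procedure inside $\Ocal_\zeta$: the maximal subscheme of $\zeta$ whose support avoids $\supp(\xi')$ corresponds to the ideal $\bigoplus_{p\in\supp(\zeta)\cap\supp(\xi')}\Ocal_{\zeta,p}$, giving quotient $\bigoplus_{p\in\supp(\zeta)\setminus\supp(\xi')}\Ocal_{\zeta,p}$. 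The only point where care is needed, and hence the closest thing to an obstacle, is the initial translation of the saturation hypothesis into the stalk condition $\Ical_p=0$ for $p\in\supp(\xi')$, together with the observation that this stalkwise vanishing promotes to an honest ring-level splitting of $\Ocal_\xi$; once that is in hand, everything else is formal bookkeeping with the product decomposition of the coordinate ring of a zero-dimensional scheme.
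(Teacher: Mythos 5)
Your proof is correct. The paper offers no argument at all for this Lemma/Definition (the results are simply labelled ``easy''), and your decomposition $\Ocal_\xi=\bigoplus_{p\in\supp(\xi)}\Ocal_{\xi,p}$ into Artinian local summands, together with the translation of saturation into the stalk condition $\Ical_p=0$ for $p\in\supp(\xi')$, is exactly the standard bookkeeping the authors leave implicit; your treatment of the uniqueness needed to make (3) and (4) well-defined is also right.
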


\subsection{Stability conditions on  K3 surfaces}\label{pre-stab}
In this section we review some basics facts about (Bridgeland) stability conditions and their moduli spaces on  K3 surfaces. Let $X$ be a K3 surface. Its \emph{algebraic cohomology group} is 
\begin{align*}
    \Halg(X,\Z):=H^0(X,\Z)\oplus \NS(X)\oplus H^4(X,\Z).
\end{align*}
The \emph{Mukai vector} $\vbf:K(\mathrm{D}^b(X))\to \Halg(X,\Z)$ is given by
\begin{align*}
    \vbf(E):=\ch(E)\sqrt{\mathrm{td(X)}}.
\end{align*}
for $E\in K(\mathrm{D}^b(X))$, and the \emph{Mukai pairing} $(-,-)$ on $\Halg(X,\Z)$ is defined by
\begin{align*}
    ((r,c_1,s),(r',c_1',s'))=c_1 c_1'-rs'-r's\in \Z.
\end{align*}
By Riemann-Roch, $\chi(F,F')=-(\vbf(F),\vbf(F'))$ for any $F,F'\in \mathrm{D}^b(X)$. The group $\Halg(X,\Z)$ endowed with the Mukai pairing is called the \emph{algebraic Mukai lattice} of $X$.

Mukai~\cite{Muk84} showed that if $M_H(\vbf)$ is nonempty and smooth, then it has the structure of a hyperk\"ahler manifold. The movable cone of a hyperk\"ahler manifold $\mathcal{M}$ admits a locally polyhedral chamber decomposition~\cite{HT09}, where the chambers corresponds to K-trivial smooth birational models of $\mathcal{M}$ (or hyperk\"ahler manifolds birational to $\mathcal{M}$). For the rest of this paper, we will simply refer to these hyperk\"ahler manifolds as \emph{birational models} of $\mathcal{M}$.

We refer the readers to \cite{Bri07} for the definitions of slicings, hearts, general Bridgeland stability conditions, and the complex manifold structure on the space of stability conditions. We note that, for the rest of this paper, all stability conditions on any K3 surface $X$ will be with respect to the lattice $\Halg(X,\Z)$. We now review the notion of geometric stability conditions on $X$. Fix $\beta$ and $\omega\in \mathrm{NS}(X)_\Qbb$ with $\omega$ ample. 
\begin{defn}
    For any coherent sheaf $F$, its \emph{slope with respect to $(\beta,\omega)$} is 
    \begin{align*}
        \mu_{\beta,\omega}(F):=\begin{cases}
        \frac{\omega\cdot(c_1(F)-\beta)}{\omega^2} & \mbox{ if } r(F)>0,\\
        +\infty& \mbox{ if } r(F)=0.
        \end{cases}
    \end{align*}
\end{defn}
    We can define an abelian subcategory of $\mathrm{D}^b(X)$ by tilting with respect to
    \begin{align*}
        &\mathcal{T}_{\beta,\omega}:=\{F\in\Coh(X)\;|\;\mbox{all HN factors $F'$ of $F$ have slope }\mu_{\beta,\omega}(F')>0\},\\
        &\mathcal{F}_{\beta,\omega}:=\{F\in\Coh(X)\;|\;\mbox{all HN factors $F'$ of $F$ have slope }\mu_{\beta,\omega}(F')\leqslant 0\}.
    \end{align*}
\begin{prop}\cite{HRS96}\label{tilt}
The category
\begin{align*}
    \mathcal{A}_{\beta,\omega}:=\langle\mathcal{T}_{\beta,\omega}, \mathcal{F}_{\beta,\omega}[1]\rangle
\end{align*}
is an abelian subcategory of $\mathrm{D}^b(X)$.
\end{prop}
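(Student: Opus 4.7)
The plan is to deduce this from the general Happel-Reiten-Smalø tilting construction cited as \cite{HRS96}. Recall that that theorem asserts: given any torsion pair $(\mathcal{T},\mathcal{F})$ in an abelian category $\mathcal{A}$, the extension-closure $\langle\mathcal{T},\mathcal{F}[1]\rangle$ inside $\mathrm{D}^b(\mathcal{A})$ is the heart of a bounded $t$-structure, hence an abelian subcategory. Since $\mathrm{Coh}(X)$ is noetherian, the task therefore reduces to verifying that $(\mathcal{T}_{\beta,\omega},\mathcal{F}_{\beta,\omega})$ is a torsion pair in $\mathrm{Coh}(X)$, i.e.\ that $\Hom(T,F)=0$ for $T\in\mathcal{T}_{\beta,\omega}$ and $F\in\mathcal{F}_{\beta,\omega}$, and that every coherent sheaf sits in a short exact sequence $0\to T\to E\to F\to 0$ with $T\in\mathcal{T}_{\beta,\omega}$ and $F\in\mathcal{F}_{\beta,\omega}$.

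The first step is to set up a Harder-Narasimhan theory for the twisted slope $\mu_{\beta,\omega}$. Because $\mu_{\beta,\omega}(F)=\mu_\omega(F)-\omega\cdot\beta/\omega^2$ for $r(F)>0$ and $\mu_{\beta,\omega}(F)=+\infty$ when $r(F)=0$, the twisted slope gives the same ordering of subsheaves as ordinary $\omega$-slope on torsion-free sheaves, shifted by a constant. I would therefore invoke the standard HN filtration theorem (e.g.\ \cite[Thm.~1.3.4]{HL10} type arguments) to produce, for every $E\in\mathrm{Coh}(X)$, a filtration $0=E_0\subset E_1\subset\cdots\subset E_n=E$ whose successive quotients are $\mu_{\beta,\omega}$-semistable with strictly decreasing slopes (torsion parts naturally appear at the top with slope $+\infty$).

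Given such an HN filtration, let $i$ be the largest index with $\mu_{\beta,\omega}(E_i/E_{i-1})>0$; then $T:=E_i$ has all HN factors of positive slope so lies in $\mathcal{T}_{\beta,\omega}$, while $F:=E/E_i$ has all HN factors of non-positive slope so lies in $\mathcal{F}_{\beta,\omega}$. This gives the required torsion decomposition. For the $\Hom$-vanishing, I would argue that any nonzero $\phi\colon T\to F$ would, after restricting to a minimal-slope HN factor of $T$ mapping nontrivially to a maximal-slope HN factor of $F$, produce a nonzero morphism between $\mu_{\beta,\omega}$-semistable sheaves in the wrong slope direction, contradicting semistability. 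Combining these two verifications with the HRS theorem yields the proposition.

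The work here is essentially bookkeeping; the only mild subtlety is the presence of torsion sheaves, which under the convention $\mu_{\beta,\omega}=+\infty$ get placed in $\mathcal{T}_{\beta,\omega}$ as expected, and the observation that the $\beta$-twist does not disrupt existence of HN filtrations since it only shifts slopes by a constant. I do not expect a significant obstacle: the content of the statement is really the existence of HN filtrations for $\mu_{\beta,\omega}$, after which HRS does all the heavy lifting.
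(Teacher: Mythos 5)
Your proposal is correct and matches the route the paper takes: the paper offers no proof beyond the citation to \cite{HRS96}, and the intended argument is exactly yours, namely that $(\mathcal{T}_{\beta,\omega},\mathcal{F}_{\beta,\omega})$ is a torsion pair in $\Coh(X)$ (via Harder--Narasimhan filtrations for $\mu_{\beta,\omega}$, with torsion sheaves assigned slope $+\infty$) so that the Happel--Reiten--Smal{\o} tilting theorem applies. The only point worth making explicit is the $\Hom$-vanishing via comparing $\mu_{\min}$ of the image as a quotient of $T$ with $\mu_{\max}$ of the image as a subsheaf of $F$, which you have correctly sketched.
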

Consider the $\C$-linear map 
\begin{eqnarray*}
Z_{\beta,\omega}:K_{\mathrm{num}}(X) & \to & \C, \\
F & \mapsto & (e^{\beta+\sqrt{-1}\omega},\vbf(F)).
\end{eqnarray*}
By \cite[Lemma 6.2]{Bri08}, the pair $\sigma_{\beta,\omega}=(\mathcal{A}_{\beta,\omega},Z_{\beta,\omega})$ defines a \emph{Bridgeland stability condition} if all spherical sheaves $G$ on $X$ satisfy $Z_{\beta,\omega}(G)\notin \R_{\leq0}$ (this condition is satisfied if $\omega^2>2$). From now on, assume $(\beta,\omega)$ is chosen so that $\sigma_{\beta,\omega}$ is a stability condition. We recall
\begin{lem}\cite[Lemma 8.2]{Bri07}\label{grpact}
The group $\mathrm{Aut}(\mathrm{D}^b(X))$ acts on the left on the space of stability conditions $\mathrm{Stab}(X)$ by $\Psi_*(\mathcal{P},Z)=(\Psi(\mathcal{P}), Z\circ\Psi^{-1}_*)$, where $\Psi\in \mathrm{Aut}(\mathrm{D}^b(X))$, $\Psi_*$ also denotes the push forward on the $K$-group, $\mathcal{P}$ is the slicing, and $Z$ is the central charge  of the stability condition. The universal cover $\widetilde{\mathrm{GL}}_2^+(\R)$ of $2\times2$ matrices with real coefficients and positive determinants acts on the right on $\mathrm{Stab}(X)$.
\end{lem}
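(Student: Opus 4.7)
The statement has two independent parts, so the plan is to verify each group action separately, in each case checking that the defined operation really does send a stability condition to a stability condition and that the group-action axioms hold.

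For the left action of $\mathrm{Aut}(\mathrm{D}^b(X))$, I would first spell out the proposed new stability condition: given $(\mathcal{P},Z)$ and an autoequivalence $\Psi$, set $\Psi(\mathcal{P})(\phi) := \Psi(\mathcal{P}(\phi))$ for every $\phi\in\R$, and take the central charge to be $Z\circ\Psi_*^{-1}$, where $\Psi_*$ denotes the induced action on $K(\mathrm{D}^b(X))$. Then I would check the slicing axioms: that $\Psi(\mathcal{P})(\phi+1)=\Psi(\mathcal{P})(\phi)[1]$, that $\mathrm{Hom}(\Psi(\mathcal{P})(\phi_1),\Psi(\mathcal{P})(\phi_2))=0$ for $\phi_1>\phi_2$, and that every object admits a Harder--Narasimhan filtration by factors lying in the $\Psi(\mathcal{P})(\phi)$. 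Each axiom transfers tautologically from the corresponding axiom for $\mathcal{P}$ because $\Psi$ is a triangulated equivalence. The compatibility of $Z\circ\Psi_*^{-1}$ with $\Psi(\mathcal{P})$ reduces, for $E=\Psi(F)$ with $F\in\mathcal{P}(\phi)$, to the identity $(Z\circ\Psi_*^{-1})([E])=Z([F])\in\R_{>0}e^{i\pi\phi}$, which is precisely the compatibility for $(\mathcal{P},Z)$.

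For the right action of $\widetilde{\mathrm{GL}}_2^+(\R)$, I would describe an element as a pair $(g,f)$, where $g\in\mathrm{GL}_2^+(\R)$ and $f\colon\R\to\R$ is an increasing function satisfying $f(\phi+1)=f(\phi)+1$, such that $g$ and $f$ induce the same map on $S^1=\R/2\Z$. Given $(\mathcal{P},Z)$ and $(g,f)$, define $\mathcal{P}'(\phi):=\mathcal{P}(f(\phi))$ and $Z':=g^{-1}\circ Z$. The slicing axioms for $\mathcal{P}'$ hold because $f$ is an increasing homeomorphism commuting with the shift $\phi\mapsto\phi+1$. The compatibility of $Z'$ with $\mathcal{P}'$ reduces to the observation that $g^{-1}$ carries rays of phase $f(\phi)$ to rays of phase $\phi$, which is the defining relation between $g$ and $f$.

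The main conceptual obstacle is the support property (equivalently, the existence of Harder--Narasimhan filtrations) under these transported structures. For the left action this follows from the fact that any autoequivalence of $\mathrm{D}^b(X)$ induces a Hodge isometry on $\Halg(X,\Z)$ (Mukai, Orlov), so the mass function is transported isometrically; for the right action it follows because $g$ is a linear automorphism of $\C\cong\R^2$ and $f$ only reparametrizes phases, neither of which can destroy local-finiteness of HN factors. Once these verifications are in place, the axioms $\Id_*=\Id$ and $(\Phi\Psi)_*=\Phi_*\Psi_*$ on the left, together with the analogous identities for $\widetilde{\mathrm{GL}}_2^+(\R)$ on the right, follow immediately from the definitions, and one checks by direct inspection that the two actions commute, yielding a combined $\mathrm{Aut}(\mathrm{D}^b(X))\times\widetilde{\mathrm{GL}}_2^+(\R)$-action on $\Stab(X)$.
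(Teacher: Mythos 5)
The paper does not prove this statement: it is quoted verbatim from Bridgeland \cite[Lemma 8.2]{Bri07}, so there is no internal proof to compare against. Your verification reproduces Bridgeland's original argument --- transporting the slicing by $\Psi$ (resp.\ reparametrizing phases by $f$ and twisting the central charge by $g^{-1}$) and checking the slicing axioms and compatibility tautologically --- and it is correct. The only quibble is terminological: in the framework of \cite{Bri07} the conditions to preserve are the existence of Harder--Narasimhan filtrations and local finiteness, not the support property (which is a separate, later axiom and is not equivalent to the existence of HN filtrations); your stated reasons (autoequivalences act by isometries on $\Halg(X,\Z)$, and $g$ is a linear isomorphism while $f$ merely relabels phases) do establish what is actually needed.
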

Any stability condition which is in the $\widetilde{\mathrm{GL}}_2^+(\R)$ orbit of $\sigma_{\beta,\omega}$ is called a \emph{geometric stability condition}. Such stability conditions are characterized by the property that skyscraper sheaves on $X$ are stable. More generally, Bridgeland(\cite[Section 8]{Bri08}) constructed a connected component $\mathrm{Stab}^\dagger(X)$ of the stability manifold $\mathrm{Stab}(X)$ that contains all geometric stability conditions.

Fixing a Mukai vector $\mathbf{v}\in H^*_{\mathrm{alg}}(X,\Z)$, there exists a locally finite set of walls (real codimension one submanifolds with boundary) in $\mathrm{Stab}^\dagger(X)$, determined solely by $\mathbf{v}$, such that the set of $\sigma$-(semi)stable objects does not change within chambers. A stability condition $\sigma$ is called \emph{generic} with respect to $\mathbf{v}$ if it does not lie on a wall for $\mathbf{v}$.

\begin{thm}\cite[Theorem 1.3(a)]{BM14a}
Let $\sigma\in \mathrm{Stab}^\dagger(X)$ be generic with respect to $\mathbf{v}$. There exists a coarse moduli space $M_\sigma(\mathbf{v})$ parametrizing $\sigma$-semistable objects with Mukai vector $\vbf$. Moreover, $M_\sigma(\vbf)$ is a normal irreducible projective variety.\ 

If $\vbf$ is primitive then $M_\sigma(\vbf)=M^{st}_\sigma(\vbf)$ is a projective hyperk\"ahler manifold and the Mukai homomorphism induces an isomorphism
\begin{align*}
    \theta_\vbf:
    \begin{cases}
        \mbox{   }\vbf^\perp\xrightarrow{\sim}\mathrm{NS}(M_\sigma(\vbf))\qquad &\mbox{if }\vbf^2>0,\\
        \vbf^\perp/\vbf\xrightarrow{\sim}\mathrm{NS}(M_\sigma(\vbf))\qquad &\mbox{if }\vbf^2=0.
    \end{cases}
\end{align*}
\end{thm}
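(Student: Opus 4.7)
The plan is to follow the strategy of Bayer and Macr\`i, which combines the construction of moduli spaces via a determinantal line bundle with a careful wall-crossing argument that reduces to the classical Gieseker case. The proof divides naturally into four steps: existence as a projective variety, smoothness in the primitive case, identification with a hyperk\"ahler manifold, and construction of the Mukai isomorphism.

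First I would construct the moduli space as an Artin stack $\mathcal{M}_\sigma(\vbf)$ following the Bridgeland-Toda framework. The key inputs are boundedness of the set of $\sigma$-semistable objects with fixed Mukai vector (proved by Toda using that the heart $\mathcal{A}_{\beta,\omega}$ is Noetherian) together with openness of $\sigma$-semistability in flat families. To pass from the stack to a projective coarse moduli space, I would introduce the Bayer-Macr\`i numerical line bundle $\ell_\sigma$ on $\mathcal{M}_\sigma(\vbf)$, whose numerical class is essentially $w\mapsto -\mathrm{Im}\bigl(Z_\sigma(w)/Z_\sigma(\vbf)\bigr)$ composed with the Mukai homomorphism. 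The central technical point, and the main obstacle of the entire proof, is the \emph{Positivity Lemma}: that $\ell_\sigma$ pairs strictly positively with every projective curve $C\subset M_\sigma(\vbf)$ whose generic member is $\sigma$-stable. Combined with a Keel-Mori-type argument for existence of the coarse space, this yields projectivity of $M_\sigma(\vbf)$.

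Next, in the primitive case, I would observe that $M_\sigma(\vbf)=M^{st}_\sigma(\vbf)$: genericity of $\sigma$ together with primitivity of $\vbf$ rules out proper Jordan-H\"older filtrations, since the existence of a proper subobject $\vbf'$ of $\vbf$ with the same phase would force the real codimension-one condition $\mathrm{Im}\bigl(Z_\sigma(\vbf')/Z_\sigma(\vbf)\bigr)=0$, placing $\sigma$ on a wall. Since $\sigma$-stable objects are simple, standard deformation theory identifies the tangent space at $[E]$ with $\Ext^1(E,E)$ of the expected dimension $\vbf^2+2$, and Serre duality supplies a holomorphic symplectic form. For normality, irreducibility, and the full hyperk\"ahler structure, I would connect $\sigma$ by a path to a stability condition $\sigma_0$ in a Gieseker chamber, where $M_{\sigma_0}(\vbf)=M^{st}_H(\vbf)$ is the classical Gieseker moduli space, known by Mukai, O'Grady, and Yoshioka to be an irreducible normal hyperk\"ahler manifold, and argue that each wall-crossing yields a birational map between normal irreducible projective varieties of the same dimension, transporting these properties.

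Finally, to construct the Mukai isomorphism $\theta_\vbf$, I would take a (quasi-)universal family $\mathcal{E}$ on $M_\sigma(\vbf)\times X$ and set $\theta_\vbf(w)=c_1\bigl(Rp_{M*}(\mathcal{E}\otimes p_X^* w^\vee)\bigr)$ for $w\in\vbf^\perp$; the orthogonality condition makes this independent of the choice of quasi-universal family, and when $\vbf^2=0$ the class $\vbf$ itself maps to zero, inducing a well-defined map from $\vbf^\perp/\vbf$. To prove that $\theta_\vbf$ is an isomorphism onto $\NS(M_\sigma(\vbf))$, I would again reduce via wall-crossing to Gieseker moduli, where Yoshioka's theorem supplies the result; compatibility at each wall holds because the two neighbouring moduli spaces are identified by a Fourier-Mukai-type derived equivalence intertwining the Mukai pairings. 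The main obstacle throughout remains the Positivity Lemma: once it is in hand, every other step is engineered to deform through chambers back to the classical Gieseker case.
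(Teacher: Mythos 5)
This statement is quoted verbatim from Bayer--Macr\`i \cite{BM14a} and the paper supplies no proof of it whatsoever -- it is an imported black box -- so there is no internal argument to compare yours against. Judged against the actual proof in \cite{BM14a}, your outline is a fair high-level summary: the stack-theoretic construction via Toda's boundedness and openness, the reduction of strictly semistable objects to walls when $\vbf$ is primitive, the symplectic form from Serre duality, the path to a Gieseker chamber, and the Mukai homomorphism defined by a quasi-universal family with surjectivity/injectivity imported from Yoshioka are all genuinely the ingredients Bayer and Macr\`i use.

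One point of emphasis is off, though. In \cite{BM14a} projectivity of the coarse moduli space is \emph{not} deduced from the Positivity Lemma plus a Keel--Mori argument; rather, part (b) of the same theorem identifies $M_\sigma(\vbf)$ with a moduli space of twisted Gieseker-semistable sheaves on a Fourier--Mukai partner of $X$, and projectivity is inherited from the classical GIT construction there. The Positivity Lemma then produces the nef class $\ell_\sigma$ (ample for generic $\sigma$) and is the engine for the birational geometry in \cite{BM14b}, but it does not by itself manufacture the coarse space. If you wanted to make your route rigorous you would need to address how a nef numerical class on an Artin stack yields a projective coarse space -- which is exactly the gap the Fourier--Mukai detour is designed to avoid. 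With that correction, your sketch matches the literature's argument; it simply is not an argument the present paper makes or needs to make.
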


We now take a closer look at wall-crossing. Let $\vbf$ be primitive with $\vbf^2\geq -2$. Suppose that $W$ is a wall for $\vbf$, $\sigma_0\in W$ does not lie on any other walls, and $\sigma_+$ and $\sigma_-$ are generic stability conditions near $\sigma_0$ but on opposite sides of $W$. 
\begin{thm}\cite[Theorem 1.4(a)]{BM14a}
The wall $W$ induces birational contractions
\begin{align*}
    \pi^\pm:M_{\sigma_\pm}(\vbf)\to \overline{M}_\pm,
\end{align*}
where $\overline{M}_\pm$ are normal projective varieties. The morphisms $\pi^\pm$ contract curves parametrizing objects that are $S$-equivalent with respect to $\sigma_0$. 
\end{thm}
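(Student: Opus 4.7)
The plan is to construct $\overline{M}_\pm$ as a coarse moduli space of $\sigma_0$-semistable objects modulo $S$-equivalence, and to define $\pi^\pm$ as the tautological maps sending a $\sigma_\pm$-stable object to its $\sigma_0$-$S$-equivalence class (noting that every $\sigma_\pm$-stable object is $\sigma_0$-semistable by openness of stability, since $\sigma_0$ lies on only one wall). The technical core is to promote this set-theoretic map to a morphism of projective varieties, and for this I would invoke the Positivity Lemma of Bayer--Macr\`i: to each $\sigma\in\Stab^\dagger(X)$ one associates a class $\ell_\sigma\in\mathrm{NS}(M_{\sigma_\pm}(\vbf))$ via the Mukai homomorphism $\theta_\vbf$ applied to the Mukai vector dual to the central charge $Z_\sigma$, with the key property that $\ell_{\sigma_\pm}$ is ample while $\ell_{\sigma_0}$ is nef.

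Second, I would show that $\ell_{\sigma_0}$ is in fact semi-ample, so that the linear system $|\ell_{\sigma_0}^{\otimes N}|$ for $N\gg 0$ defines a morphism $\pi^\pm:M_{\sigma_\pm}(\vbf)\to\overline{M}_\pm$ onto a normal projective variety; normality follows by Stein factorizing and using the fact that $M_{\sigma_\pm}(\vbf)$ is a smooth hyperk\"ahler manifold (since $\vbf$ is primitive and $\sigma_\pm$ is generic). Birationality of $\pi^\pm$ is then immediate: the locus of objects which are $\sigma_\pm$-stable and simultaneously $\sigma_0$-stable is open, dense, and cut out by Jordan--H\"older conditions on the finitely many classes of $\sigma_0$-stable factors compatible with $\vbf$, and on this locus $\pi^\pm$ is an isomorphism onto its image.

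To identify the contracted curves, I would use the fundamental numerical criterion attached to the Positivity Lemma: for an integral curve $C\subset M_{\sigma_\pm}(\vbf)$ parametrizing a family of $\sigma_\pm$-stable objects $\{E_t\}_{t\in C}$, one computes
\[
\ell_{\sigma_0}\cdot C = \mathrm{Im}\!\left(-\frac{Z_{\sigma_0}(\Phi_{\mathcal{E}}(\Ocal_C))}{Z_{\sigma_0}(\vbf)}\right),
\]
and this vanishes precisely when all $E_t$ have the same $\sigma_0$-Jordan--H\"older factors (with the same multiplicities), i.e., are $S$-equivalent with respect to $\sigma_0$. Thus the $\pi^\pm$-contracted curves are exactly the loci of $\sigma_0$-$S$-equivalent objects, and every such nontrivial family of extensions with fixed associated graded produces a contracted curve, confirming the description of $\overline{M}_\pm$ as the moduli space of $S$-equivalence classes.

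The hard part will be proving the semi-ampleness (equivalently the projectivity of $\overline{M}_\pm$); the mere nefness from the Positivity Lemma is not enough, and one cannot directly invoke base-point freeness for hyperk\"ahler manifolds without further input. The route I would take, following Bayer--Macr\`i, is to transport the question along autoequivalences of $\mathrm{D}^b(X)$ to a Gieseker chamber where $M_{\sigma_\pm}(\vbf)$ is identified with a classical moduli space $M_H(\vbf')$ and $\ell_{\sigma_0}$ corresponds to a known globally generated or big-and-nef class; combined with the wall-and-chamber structure of $\mathrm{Stab}^\dagger(X)$ and local finiteness of walls for $\vbf$, this yields the desired projective contraction.
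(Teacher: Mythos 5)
This statement is not proved in the paper at all: it is quoted verbatim from Bayer--Macr\`i \cite[Theorem 1.4(a)]{BM14a} and used as a black box, so there is no internal proof to compare against. Judged on its own terms, your sketch correctly reproduces the strategy of the cited source: the class $\ell_{\sigma_0}$ coming from the Positivity Lemma is nef on $M_{\sigma_\pm}(\vbf)$, the intersection number $\ell_{\sigma_0}\cdot C$ is computed by the formula you write down, and it vanishes exactly on curves whose (generic) members are $S$-equivalent with respect to $\sigma_0$; this is the right mechanism both for defining $\pi^\pm$ and for identifying the contracted curves, and birationality on the common $\sigma_0$-stable locus is as you say.

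The one step where your route diverges from the actual argument, and where I think it is weaker, is semi-ampleness. You dismiss base-point freeness as unavailable ``without further input'' and propose instead to transport the question along autoequivalences to a Gieseker chamber. But for a general flopping wall there is no classical contraction of $M_H(\vbf')$ to compare with, so it is not clear this transport ever lands you somewhere the class is visibly semi-ample. The further input that actually closes the argument is bigness: for a non-totally-degenerate wall the vector $w_{\sigma_0}\in\vbf^\perp$ corresponding to $\ell_{\sigma_0}$ under $\theta_\vbf$ has positive square, and on a hyperk\"ahler manifold of dimension $2n$ one has $D^{2n}=c\cdot q(D)^n$, so $\ell_{\sigma_0}$ is nef and big; since $K_{M_{\sigma_\pm}(\vbf)}=0$, the Kawamata base-point-free theorem applies directly and yields the projective contraction $\pi^\pm$ onto a normal projective $\overline{M}_\pm$. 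So the ``hard part'' you flag is genuinely handled by the tool you set aside, once bigness is established; without that, your proposal has a gap at precisely the step you identify as hard.
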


Walls are classified as follows.
\begin{defn}\cite[Definition 2.20]{BM14b}
    The wall $W$ is called
    \begin{itemize}
    \item a \emph{fake wall} if there are no curves of objects in $M_{\sigma_\pm}(\vbf)$ that are $S$-equivalent objects with respect to $\sigma_0$;
    \item a \emph{flopping wall} if $\overline{M}_+\cong \overline{M}_-$ and the induced birational map $M_{\sigma_+}(\vbf)\dashrightarrow M_{\sigma_-}(\vbf)$ is a flop;
    \item a \emph{divisorial wall} if the morphisms $\pi^\pm$ are both divisorial contractions; in this case $M_{\sigma_+}(\vbf)\cong M_{\sigma_-}(\vbf)$.
\end{itemize}
\end{defn}
General numerical criteria for locating walls were given in \cite[Section 5]{BM14b}. We will describe these for the Hilbert scheme of points in Section~\ref{wallcross}.

We will need the following result.
\begin{prop}\cite[Proposition 2.11]{BM14b}
    The stability conditions $\sigma_{\beta,\omega}$ and $\sigma_{-\beta,\omega}$ are dual to each other: an object $E\in \mathrm{D}^b(X)$ is $\sigma_{\beta,\omega}$-(semi)stable if and only if $\RHom(E,\Ocal_X)[1]$ is $\sigma_{-\beta,\omega}$-(semi)stable.
\end{prop}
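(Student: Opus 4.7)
The plan is to exploit that $\mathbb{D}:=\RHom(-,\Ocal_X)[1]$ is a contravariant auto-anti-equivalence of $\mathrm{D}^b(X)$ which squares to the identity (since $\Ocal_X$ is a dualizing sheaf up to the shift $[2]$, and we're on a surface, so $\mathbb{D}\circ\mathbb{D}=\id$). I would then verify that $\mathbb{D}$ carries the stability datum $\sigma_{\beta,\omega}=(\mathcal{A}_{\beta,\omega},Z_{\beta,\omega})$ to $\sigma_{-\beta,\omega}=(\mathcal{A}_{-\beta,\omega},Z_{-\beta,\omega})$, and deduce (semi)stability on the other side from the contravariance.

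First I would compute the effect of $\mathbb{D}$ on Mukai vectors. If $\vbf(E)=(r,c_1,s)$ then $\ch(E^\vee)=(r,-c_1,\ch_2(E))$, so $\vbf(\RHom(E,\Ocal_X))=(r,-c_1,s)$ and hence $\vbf(\mathbb{D}(E))=(-r,c_1,-s)$. Plugging into $Z_{\beta,\omega}(F)=(e^{\beta+\sqrt{-1}\omega},\vbf(F))$ and expanding, one checks the identity
\begin{align*}
Z_{-\beta,\omega}(\mathbb{D}(E)) \;=\; \overline{Z_{\beta,\omega}(E)},
\end{align*}
so phases satisfy $\phi_{-\beta,\omega}(\mathbb{D}(E))=-\phi_{\beta,\omega}(E)\bmod 1$, which is exactly the behaviour one wants of a duality on a stability condition.

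The harder step is to show $\mathbb{D}(\mathcal{A}_{\beta,\omega})=\mathcal{A}_{-\beta,\omega}$. I would check this on generators of the tilting pair. For a torsion-free sheaf $F$, $\mu_{-\beta,\omega}(F^\vee)=-\mu_{\beta,\omega}(F)$, so (after resolving by locally free sheaves and examining the local-to-global $\Ext$ spectral sequence, which only contributes a $0$-dimensional sheaf to $\mathcal{E}xt^1(F,\Ocal_X)$ and thus lies in $\mathcal{T}_{-\beta,\omega}$) the functor $\mathbb{D}$ sends $\mathcal{F}_{\beta,\omega}[1]$ into $\mathcal{T}_{-\beta,\omega}\subset\mathcal{A}_{-\beta,\omega}$ and sends torsion-free parts of $\mathcal{T}_{\beta,\omega}$ into $\mathcal{F}_{-\beta,\omega}[1]\subset\mathcal{A}_{-\beta,\omega}$. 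For a pure torsion sheaf $T$, local duality on a K3 surface shows $\RHom(T,\Ocal_X)$ is concentrated in degree $1$ if $\supp T$ is a curve and in degree $2$ if $\supp T$ is $0$-dimensional; in either case $\mathbb{D}(T)$ is a torsion sheaf placed so that it lies in $\mathcal{A}_{-\beta,\omega}$ (and has $\mu=+\infty$, so ends up in the analogue of $\mathcal{T}_{-\beta,\omega}$). Assembling via the triangle $H^{-1}(E)[1]\to E\to H^0(E)\to$ for $E\in\mathcal{A}_{\beta,\omega}$, one concludes $\mathbb{D}(E)\in\mathcal{A}_{-\beta,\omega}$; applying $\mathbb{D}$ again and using $\mathbb{D}^2=\id$ gives the reverse inclusion, so $\mathbb{D}$ is an anti-equivalence of hearts.

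With these two pieces in hand, the conclusion is formal. Any sub/quotient sequence $0\to A\to E\to B\to 0$ in $\mathcal{A}_{\beta,\omega}$ is mapped by $\mathbb{D}$ to $0\to\mathbb{D}(B)\to\mathbb{D}(E)\to\mathbb{D}(A)\to 0$ in $\mathcal{A}_{-\beta,\omega}$, and the phase identity $\phi_{-\beta,\omega}(\mathbb{D}(-))=-\phi_{\beta,\omega}(-)\bmod 1$ converts the inequality $\phi_{\beta,\omega}(A)\le(\text{resp.\ }<)\,\phi_{\beta,\omega}(B)$ into $\phi_{-\beta,\omega}(\mathbb{D}(A))\ge(\text{resp.\ }>)\,\phi_{-\beta,\omega}(\mathbb{D}(B))$. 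Since every proper quotient of $\mathbb{D}(E)$ in $\mathcal{A}_{-\beta,\omega}$ arises this way (via $\mathbb{D}^2=\id$), $E$ is $\sigma_{\beta,\omega}$-(semi)stable iff $\mathbb{D}(E)$ is $\sigma_{-\beta,\omega}$-(semi)stable. The main obstacle, as flagged, is the careful bookkeeping in the tilt verification for torsion and non-locally-free torsion-free sheaves, where one has to control both cohomology sheaves of $\RHom(-,\Ocal_X)$ simultaneously.
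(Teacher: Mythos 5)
Note first that the paper gives no proof of this proposition: it is imported verbatim from \cite[Proposition 2.11]{BM14b}, with the shift changed from $[2]$ to $[1]$ (harmless, since shifting never affects (semi)stability). So there is no in-paper argument to compare against; the question is whether your direct proof is sound, and it has a genuine gap at its central step.

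The claim $\mathbb{D}(\mathcal{A}_{\beta,\omega})=\mathcal{A}_{-\beta,\omega}$ is false. Take a skyscraper sheaf $\Ocal_p$: it lies in $\mathcal{T}_{\beta,\omega}\subset\mathcal{A}_{\beta,\omega}$, but $\RHom(\Ocal_p,\Ocal_X)\cong\Ocal_p[-2]$, so $\mathbb{D}(\Ocal_p)=\Ocal_p[-1]$ has cohomology in degree $+1$ and cannot lie in any tilted heart $\langle\mathcal{T},\mathcal{F}[1]\rangle$, which sits in degrees $-1$ and $0$. The same degree problem occurs for the $0$-dimensional sheaf $\RExt^1(F,\Ocal_X)$ when $F$ is torsion-free but not locally free, and there is also a boundary failure at $\mu=0$: if $F\in\mathcal{F}_{\beta,\omega}$ is locally free with $\mu_{\beta,\omega}(F)=0$, then $\mathbb{D}(F[1])=F^\vee$ has $\mu_{-\beta,\omega}(F^\vee)=0$, hence lies in $\mathcal{F}_{-\beta,\omega}$ rather than $\mathcal{T}_{-\beta,\omega}$, so again $\mathbb{D}(F[1])\notin\mathcal{A}_{-\beta,\omega}$. (There is also a sign slip: with the shift $[1]$ one gets $Z_{-\beta,\omega}(\mathbb{D}(E))=-\overline{Z_{\beta,\omega}(E)}$ rather than $\overline{Z_{\beta,\omega}(E)}$; this is harmless modulo $1$ but symptomatic.) Consequently your final ``formal'' paragraph, which needs $\mathbb{D}$ to be an exact anti-equivalence of the two hearts so that subobject/quotient sequences correspond, does not go through as written. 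The statement is nevertheless true, and the correct route --- the one taken in \cite{BM14b} --- is to show that $\mathbb{D}$ carries the slicing of $\sigma_{\beta,\omega}$ to that of $\sigma_{-\beta,\omega}$ only up to the $\widetilde{\mathrm{GL}}_2^+(\R)$-action (equivalently, $\mathbb{D}(\mathcal{A}_{\beta,\omega})$ is a further tilt of $\mathcal{A}_{-\beta,\omega}$ at the objects of phase $1$, such as the $0$-dimensional torsion sheaves above); since the $\widetilde{\mathrm{GL}}_2^+(\R)$-action does not change which objects are (semi)stable, this suffices. Your Mukai-vector computation and the order-reversal of phases are correct ingredients for that argument, but the heart-to-heart claim must be replaced by the slicing/orbit statement.
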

\begin{rem}
We note that \cite[Proposition 2.11]{BM14b} uses a shift by $2$ on the dual of the object, whereas we shift by $1$. Our choice does not affect (semi)stability but is more suitable for its application in Section~\ref{wallcross}.
\end{rem}

We recall the comparison between Gieseker and Bridgeland stability when $\omega$ is large.
\begin{thm}\cite[Proposition 14.2]{Bri08}
    Fix $\vbf\in \Halg(X,\Z)$ and $s_0\in\R$. Let $H\in \mathrm{NS}(X)$ be ample with $\mu_{s_0H,H}(\vbf)>0$. Then $M_{\sigma_{s_0H,tH}}(\vbf)=M_H(\vbf)$ for $t\gg0$.
\end{thm}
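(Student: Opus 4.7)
The plan is to match $\sigma_{s_0H,tH}$-stability with Gieseker $H$-stability order-by-order in $t$, exploiting the explicit polynomial dependence of $Z_{s_0H,tH}(F)=(e^{s_0H+\sqrt{-1}\,tH},\vbf(F))$ on $t$. The hypothesis $\mu_{s_0H,H}(\vbf)>0$ will guarantee that $\operatorname{Im} Z_{s_0H,tH}(\vbf)>0$ for all $t>0$, and that any Gieseker semistable sheaf $F$ with $\vbf(F)=\vbf$ has every $\mu_H$-HN factor of positive $\mu_{s_0H,tH}$-slope, placing $F$ in $\Tcal_{s_0H,tH}\subset\Acal_{s_0H,tH}$ at the outset.

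The first main step is an asymptotic expansion of the Bridgeland slope $-\operatorname{Re}Z_{s_0H,tH}/\operatorname{Im}Z_{s_0H,tH}$ as $t\to\infty$ on sheaves lying in $\Tcal_{s_0H,tH}$. The leading $O(t)$ term orders sheaves by $\mu_H$, and the subleading $O(1/t)$ correction orders sheaves of equal $\mu_H$ by $\ch_2/r$; these are exactly the top two layers of the lexicographic comparison of reduced Hilbert polynomials that defines Gieseker stability on a K3 surface. Because the Mukai vectors of potential destabilising subsheaves of any $F$ with $\vbf(F)=\vbf$ take only finitely many relevant values, there is a threshold $t_0=t_0(\vbf)$ beyond which the two orderings agree on every pair that could matter.

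The technical core is to show that for $t\gg 0$, every $\sigma_{s_0H,tH}$-semistable object $E\in\Acal_{s_0H,tH}$ with $\vbf(E)=\vbf$ is concentrated in cohomological degree zero. I would work with the canonical triangle $H^{-1}(E)[1]\to E\to H^0(E)$, in which $H^{-1}(E)\in \Fcal_{s_0H,tH}$ is torsion-free of $\mu_H$-slope at most $s_0$. For $t$ large, $Z_{s_0H,tH}(H^{-1}(E)[1])$ moves into the second quadrant with phase tending to $1$, while $Z_{s_0H,tH}(E)$ has phase tending to $0$ when $\vbf$ has positive rank and to $1/2$ when $\vbf$ is torsion. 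The monomorphism $H^{-1}(E)[1]\hookrightarrow E$ in $\Acal_{s_0H,tH}$ would then contradict $\sigma$-semistability of $E$ unless $H^{-1}(E)=0$, and uniformity of the threshold $t_0$ across all candidate $E$ follows from boundedness of $\sigma$-semistable objects of fixed Mukai vector, coming from \cite[Theorem 1.3]{BM14a}.

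Combining the two directions gives the equality $M_{\sigma_{s_0H,tH}}(\vbf)=M_H(\vbf)$ for $t\gg 0$: any $\sigma$-destabilising subobject of a Gieseker semistable sheaf is necessarily a genuine sheaf by the previous paragraph, and it must then destabilise Gieseker by the slope expansion, and conversely. I expect the main obstacle to be the step that upgrades $\sigma$-semistable complexes to honest sheaves, because the phase argument needs $t_0$ to be chosen uniformly over the bounded family of possible cohomology sheaves $H^{-1}(E)$ and $H^0(E)$, rather than object-by-object; without a boundedness input of the type provided by \cite{BM14a}, successively more exotic cohomology sheaves could in principle require successively larger thresholds and obstruct a uniform statement.
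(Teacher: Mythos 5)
The paper does not prove this statement: it is quoted directly from Bridgeland \cite[Proposition 14.2]{Bri08}, so the only meaningful comparison is with Bridgeland's original argument, and your sketch follows essentially that route. The three ingredients you isolate --- the $t\to\infty$ expansion of $-\operatorname{Re}Z_{s_0H,tH}/\operatorname{Im}Z_{s_0H,tH}$ matching the lexicographic comparison of reduced Hilbert polynomials ($\mu_H$ at order $t$, then $\chi/r$, equivalently $s/r$, at the next order), the vanishing of $H^{-1}(E)$ for $\sigma$-semistable $E$ via the phase dichotomy, and a uniform threshold $t_0(\vbf)$ obtained from boundedness --- are exactly the ones assembled in \cite[Section 14]{Bri08}. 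As an outline it is sound.

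Two steps are glossed over and deserve a warning. First, in the direction ``Gieseker semistable $\Rightarrow$ $\sigma_{s_0H,tH}$-semistable for $t\gg 0$,'' a subobject $A\hookrightarrow F$ in $\mathcal{A}_{s_0H,tH}$ of a sheaf $F\in\mathcal{T}_{s_0H,tH}$ is automatically a sheaf (this follows from the cohomology sequence, not from your ``previous paragraph,'' which concerns semistable objects of class $\vbf$), but the induced sheaf map $A\to F$ need not be injective: its kernel is $K=H^{-1}(F/A)\in\mathcal{F}_{s_0H,tH}$. One must write $Z(A)=Z(A/K)+Z(K)$ and use $\operatorname{Re}Z_{s_0H,tH}(K)\to+\infty$, $\operatorname{Im}Z_{s_0H,tH}(K)\leq 0$ to reduce the phase inequality to the honest saturable subsheaf $A/K\subseteq F$, where Grothendieck's lemma and the Bogomolov inequality give the finiteness of relevant classes. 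Second, invoking \cite[Theorem 1.3]{BM14a} for the uniformity of $t_0$ is logically delicate: the construction and identification of those moduli spaces rests on the large-volume-limit comparison being proved here, so the boundedness input should come from Bridgeland's own finiteness statement for semistable factors of bounded mass in \cite[Section 14]{Bri08} (or an independent boundedness argument) rather than from a downstream result.
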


\section{Brill-Noether loci in $S^{[10]}$ and $M$}\label{BNloci}
We define the Brill-Noether loci in $\hilb$ and $M$ by 
\begin{align*}
   \mathrm{BN}^i(S^{[10]}):=\{\xi\in S^{[10]}\;|\;h^0(\mathcal{I}_\xi(3))\geq i+1\}\subset S^{[10]}
\end{align*}
and
\begin{align*}
    \mathrm{BN}^i(M):=\{\Ecal\in  M\;|\;h^0(\Ecal)\geq i+1\}\subset M.
\end{align*}

\subsection{Brill-Noether loci in $S^{[10]}$}
We will focus on some special subsets in $\BN^i(\hilb)$. We define
\begin{align*}
    W_i:=\{\xi\in \hilb\;|\;\exists\text{ saturated }\xi'\subset\xi \text{ of length $10-i$ and } L\in |H| \text{ such that } \xi'\subset L\}
\end{align*}
for $i=0,1,2,3,4$. Then $W_i$ is a locally closed subset of $\hilb$ with closure
\begin{align*}
    \overline{W_i}=\{\xi\in \hilb\;|\;\exists\text{ }\xi'\subset\xi \text{ of length $10-i$ and } L\in |H| \text{ such that } \xi'\subset L\}.
\end{align*}
Note that $W_0=\overline{W_0}$, $\overline{W_i}\subset \overline{W_{i+1}}$ for $i=0,1,2,3$, and $\overline{W_i}\subset \BN^{5-i}(\hilb)$ for $i=0,1,2,3,4$.

We define
\begin{align*}
    \mathcal{W}_0=\{\xi\in\hilb\;|\;\exists\text{ } Q'\in |2H| \text{ such that } \xi\subset Q'\}
\end{align*}
and
\begin{align*}
    \mathcal{W}_1=\{\xi\in\hilb\;|\;\exists\text{ saturated }\xi'\subset\xi \text{ of length $9$ and } Q''\in |2H| \text{ such that } \xi'\subset Q''\}.
    \end{align*}
Note that $\Wcal_0$ is closed and $\Wcal_1$ is locally closed with closure
\begin{align*}
    \mathcal{W}_1=\{\xi\in\hilb\;|\;\exists\text{ }\xi'\subset\xi \text{ of length $9$ and } Q''\in |2H| \text{ such that } \xi'\subset Q''\}.
    \end{align*}
Moreover, $\mathcal{W}_0\subset\overline{\mathcal{W}_1}$ and $\overline{\Wcal_i}\subset\BN^{2-i}(\hilb)$ for $i=0,1$.

Our next lemma provides bounds for the dimensions of spaces of lines/conics/cubics containing a zero-dimensional closed subscheme.
\begin{lem}\label{linear systems}
Let $\xi\in S^{[10]}$ and let $\xi_i\in S^{[10-i]}$ for $i=0,1,2,3,4,5$.
\begin{enumerate}
    \item We have
    \begin{align*}
        0\leq h^0(S,\Ical_\xi(1))\leq 1\qquad\mbox{and}\qquad 1\leq h^0(S,\Ical_\xi(3))\leq 6.
    \end{align*}
    \item If $h^0(S,\Ical_\xi(1))=1$ then $h^0(S,\Ical_\xi(3))=6$.
    \item 
    If $h^0(S,\Ical_{\xi_i}(1))=1$ then $h^0(S,\Ical_{\xi_i}(2))=3$.
    \item 
    If $h^0(S,\Ical_{\xi}(2))\geq 2$ then $\xi\in\overline{W_2}$.
\end{enumerate}
\end{lem}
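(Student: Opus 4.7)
The plan is to prove the four assertions in order, based on Riemann--Roch on the K3 surface $S$ (which gives $h^0(\mathcal{O}_S(kH)) = 2+k^2$ for $k \geq 1$, with higher cohomology vanishing since $H$ is ample), so in particular $h^0(\mathcal{O}_S(H)) = 3$, $h^0(\mathcal{O}_S(2H)) = 6$, $h^0(\mathcal{O}_S(3H)) = 11$, together with the intersection numbers $H^2 = 2$ and $H \cdot kH = 2k$. For (1), the lower bound $h^0(\mathcal{I}_\xi(3H)) \geq 1$ follows immediately from the ideal sheaf sequence and $11 - 10 = 1$, and the bound $h^0(\mathcal{I}_\xi(H)) \leq 1$ is a B{\'e}zout argument: two independent sections would give distinct lines $L_1, L_2 \in |H|$ with $\xi \subset L_1 \cap L_2$, impossible since $L_1 \cdot L_2 = 2 < 10$. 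For $h^0(\mathcal{I}_\xi(3H)) \leq 6$, I would analyse the fixed components of $|\mathcal{I}_\xi(3H)|$: if a fixed line, conic, or cubic appears, the residual linear system is contained in $|2H|$, $|H|$, or $|\mathcal{O}_S|$, giving $h^0 \leq 6, 3, 1$ respectively. If there is no fixed component, Bertini produces an irreducible $D \in |\mathcal{I}_\xi(3H)|$ (smooth away from $\xi$) of arithmetic genus $p_a = 10$; from $0 \to \mathcal{O}_S \to \mathcal{I}_\xi(3H) \to \mathcal{I}_{\xi/D}(3H) \to 0$ the bound reduces to Clifford applied to the line bundle $\mathcal{I}_{\xi/D}(3H)$ of degree $3H \cdot 3H - 10 = 8$, and since $8 < 2p_a - 2 = 18$ we get $h^0 \leq 5$, hence $h^0(\mathcal{I}_\xi(3H)) \leq 6$.

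For (2) and (3) I would use the same device in its simplest form. When $\xi$ (resp.\ $\xi_i$) lies on a line $L \in |H|$, the exact sequence
\[
0 \to \mathcal{O}_S(-H) \to \mathcal{I}_\xi \to \mathcal{I}_{\xi/L} \to 0,
\]
twisted by $\mathcal{O}_S(3H)$ for (2) and $\mathcal{O}_S(2H)$ for (3), exhibits on the right a line bundle on $L$ of degree $6 - 10 = -4$ and $4 - (10-i) = i-6$ respectively. Both are negative (the latter throughout $i \leq 5$), so these line bundles have no global sections; combined with the vanishing $h^1(\mathcal{O}_S(2H)) = h^1(\mathcal{O}_S(H)) = 0$, the middle $h^0$ equals $h^0(\mathcal{O}_S(2H)) = 6$ in (2) and $h^0(\mathcal{O}_S(H)) = 3$ in (3).

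For (4), two independent sections of $\mathcal{I}_\xi(2H)$ give distinct conics $Q_1, Q_2 \in |2H|$ with $\xi \subset Q_1 \cap Q_2$. If $Q_1, Q_2$ share no common component, then $\ell(Q_1 \cap Q_2) = Q_1 \cdot Q_2 = 8 < 10$, a contradiction; hence they share a line $L$, and we may write $Q_j = L + L_j'$ with $L_1' \neq L_2'$. Then $\mathcal{I}_{Q_1} + \mathcal{I}_{Q_2} = \mathcal{I}_L \cdot (s_{L_1'}, s_{L_2'})$, so $Q_1 \cap Q_2$ is supported set-theoretically on $L \cup (L_1' \cap L_2')$, and the part of $\xi$ supported away from $L$ sits inside $L_1' \cap L_2'$, a subscheme of length at most $L_1' \cdot L_2' = 2$. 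Therefore the scheme-theoretic intersection $\xi \cap L$ is a subscheme of $\xi$ of length at least $10 - 2 = 8$ contained in $L$, placing $\xi \in \overline{W_2}$. The hard part of the lemma will be the upper bound in (1): producing a sufficiently well-behaved member of $|\mathcal{I}_\xi(3H)|$ to which Clifford's theorem applies is the only step requiring genuine curve-geometric input, whereas the rest is Riemann--Roch plus B{\'e}zout bookkeeping.
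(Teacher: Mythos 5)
Parts (2) and (3) are correct and essentially identical to the paper's argument (the only imprecision is calling $\Ical_{\xi/L}(3H)$ a line bundle; it is a priori only a rank-one torsion-free sheaf on the integral curve $L$, but negative degree still kills its sections). Part (4) is correct and is a genuinely different, more elementary route than the paper's: you run a B\'ezout argument on the two conics and bound $\ell(\xi)-\ell(\xi\cap L)$ by $L_1'\cdot L_2'=2$, whereas the paper analyzes the map $\Ocal_S(-2)^{\oplus 2}\to\Ical_\xi$ and uses stability of the rank-one kernel and image. Your version avoids moduli-theoretic input entirely; the paper's version generalizes more readily to the higher-codimension strata.

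The genuine gap is in the upper bound $h^0(\Ical_\xi(3))\leq 6$ of part (1), precisely at the step you yourself flag as the crux. First, Bertini does not produce an irreducible member of a linear system merely from the absence of fixed components: a system with no fixed part can still be composite with a pencil, with every member reducible. This is repairable here --- since $\Pic(S)=\Z[H]$, a system composite with a pencil inside $|3H|$ has projective dimension at most $3$, while you may assume $\dim|\Ical_\xi(3H)|\geq 6$ for contradiction --- but you must say it. Second, and more seriously, the $D$ you obtain is only smooth away from the base locus, which contains $\xi$; at a point of $\xi$ where $D$ is singular, $\Ical_{\xi/D}(3H)$ is a rank-one torsion-free sheaf that is \emph{not} locally free, and the classical Clifford theorem (stated for line bundles, or divisors, on smooth curves) simply does not apply to it. This is not a removable technicality: one cannot in general choose $D$ smooth along $\xi$, since $\xi$ lies in the base locus. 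The paper handles exactly this point by reducing to an \emph{integral} $D\in|3H|$ (using that the non-integral locus $\Sigma$ contains no $6$-dimensional linear subspace of $B=\Pbb^{10}$) and then invoking the extension of Clifford's theorem to torsion-free rank-one sheaves on $2$-connected curves with planar singularities due to Franciosi and Tenni. Your argument needs that same input (or an equivalent singular/torsion-free Clifford bound) to close; as written, the final inequality $h^0(\Ical_{\xi/D}(3H))\leq 5$ is unjustified whenever $D$ is singular at a point of $\xi$.
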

\begin{proof}
(1) For $h^0(S,\Ical_\xi)$, we note that any line in $|H|$ is integral. Hence for $L_1,L_2\in |H|$ and $L_1\neq L_2$, $L_1\cap L_2$ is a length $2$ subscheme of $S$. As a result $h^0(S,\Ical_\xi(1))\leq 1$.

The fact that $h^0(S,\Ical_\xi(3))\geq 1$ follows from the short exact sequence
\begin{align*}
    0\to \Ical_\xi(3)\to \Ocal_S(3)\to\Ocal_\xi\to 0.
\end{align*}
Denote by $B(\xi)$ the subspace of $B$ consisting of cubics containing $\xi$. Suppose that $B(\xi)\subset \Sigma$; then $\dim B(\xi)\leq 5$ since there is no 6-dimensional linear subspace of $B=\Pbb^{10}$ in $\Sigma$. Hence we can assume that there is an integral curve $D\in |3H|$ containing $\xi$. When $D$ is smooth, we have the following commutative diagram.
\[
\xymatrix{
0 \ar[r] & 0=H^0(\Ical_\xi) \ar@{^(->}[d] \ar[r] & H^0(\Ical_\xi(3)) \ar@{^(->}[d] \ar@{^(->}[r] \ar[dr]^g & H^0({\Ical_\xi}(3)|_D) \ar[d]^f \ar[r] & \ldots \\
0 \ar[r] & H^0(\Ocal_S)  \ar[r] & H^0(\Ocal_S(3)) \ar[r]  & H^0(\omega_D) \ar[r] & 0
}
\]
We note that $\Ical_\xi(3)|_D=\omega_D(-\xi)\oplus\Ocal_\xi$ and thus $H^0(\Ical(3)|_D)=H^0(\omega_D(-\xi))\oplus H^0(\Ocal_\xi)$, where the second summand is the kernel of $f$. As a result, we have
\begin{align*}
    h^0(\Ical_\xi(3))\leq \dim \mathrm{im}(g)+h^0(\Ocal_S)\leq h^0(D,\omega_D(-\xi))+1=h^0(D,\Ocal_D(\xi))\leq 6,
\end{align*}
where the final inequality follows from Clifford's theorem. The case when $D$ is singular follows from the same argument using an extension of Clifford's theorem to certain singular curves due to Franciosi and Tenni~\cite{FT14} (note that $D$ is automatically $2$-connected because it is integral, and it has locally planar singularities because it is contained in the smooth K3 surface $S$).

\vspace*{2mm}
\noindent(2) Let $0\neq s\in H^0(\Ical_\xi(1))$ and let $L$ be the corresponding line. We have a short exact sequence 
\[  0 \rightarrow \Ocal_S(2) \xrightarrow{s} \Ical_\xi(3) \rightarrow \ker (\Ocal_S(3)|_L \rightarrow \Ocal_\xi) \rightarrow 0.\]
Note that the right term is a torsion free sheaf of rank one on $L$ with negative degree, and thus we have an isomorphism $H^0(\Ical_\xi(3))=H^0(\Ocal_S(2))$.

\vspace*{2mm}
\noindent(3) Let $0\neq s'\in H^0(\Ical_{\xi_i}(1))$ and let $L'$ be the corresponding line. We have a short exact sequence
\[  0 \rightarrow \Ocal_S(1) \xrightarrow{s} \Ical_{\xi_i}(2) \rightarrow \ker (\Ocal_S(2)|_{L'} \to \Ocal_{\xi_i}) \rightarrow 0.\]
Again the right term is a torsion free sheaf of rank one on $L'$ with negative degree, and thus we have an isomorphism $H^0(\Ical_{\xi_i}(2))=H^0(\Ocal_S(1))$.

\vspace*{2mm}
\noindent(4) Suppose $h^0(S,\Ical_{\xi}(2))\geq 2$. Then one can find a morphism $f:\Ocal_S(-2)^{\oplus 2}\to \Ical_\xi$ whose image has $\Ocal_S(-2)$ as a proper subsheaf. Note that both $\ker(f)$ and $\mathrm{im}(f)$ are torsion free sheaves of rank one, hence stable. Combining this with the fact that $\ker(f)\subseteq \Ocal^{\oplus2}_S(-2)$, $\Ocal_S(-2)\subsetneq \mathrm{im}(f)\subseteq \Ical_\xi$, and the stability of all sheaves involved, it is easy to see that $\vbf(\mathrm{im}(f))=(1,-1,2-p)$, where $p$ can be $0,1$, or $2$. If $\mathrm{coker}(f)$ is pure of dimension one then $\xi\in \overline{W_p}$. Otherwise $\xi\in\overline{ W_{p'}}$ for some $p'<p$. In any case, $\xi\in \overline{W_2}$.
\end{proof}

As a result of part (1) of the lemma, we see that $\BN^0(\hilb)=\hilb$ and $\BN^6(\hilb)=\emptyset$. 

In the next three propositions, we investigate the structure of the $W_i$'s and $\Wcal_j$'s. Note that the universal sheaves appearing below exist by \cite[Corollary 4.6.7]{HL10}. We first show that all $W_i$ are generically $\Pbb^{8-i}$-bundles.
\begin{prop}\label{part1}
    \begin{enumerate}[(i)]
    \item The variety $W_0$ is a $\Pbb^8$-bundle over $M(0,1,-11)$. More precisely, let $\mathscr{U}_{-11}$ be the universal sheaf on $M(0,1,-11)\times S$ and define $$\mathscr{E}_0:=p_{1*}R\mathcal{H}om(\mathscr{U}_{-11},p_2^*\Ocal_S(-1))[1].$$
    Then $\mathscr{E}_0$ is a vector bundle of rank $9$ on $M(0,1,-11)$ and $W_0\cong\Pbb(\mathscr{E}_0)$.
        \item For $i=1,2,3$ the variety $W_{i}\backslash \overline{W_{i-1}}$ is a $\Pbb^{8-i}$-bundle over an open subset of $S^{[i]}\times M(0,1,i-11)$.
    \end{enumerate}
\end{prop}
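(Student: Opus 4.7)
The plan is, in each case, to construct a classifying morphism from the Brill-Noether stratum down to the proposed base, identify its fibers with complete linear systems on curves, and match these fibrations with the projectivizations of the stated $\Ext$-sheaves via a Grothendieck-Serre duality computation.

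\textbf{Part (i).} By Lemma~\ref{linear systems}(1) every $\xi\in W_0$ lies in a unique line $L\in|H|$, so $p\colon\xi\mapsto\Ocal_L(-\xi)$ defines a morphism $W_0\to M(0,1,-11)$; a Mukai-vector check using $L^2=2$, $g(L)=2$, and $\chi(\Ocal_L)=-1$ confirms the target. Conversely, every $\Fcal\in M(0,1,-11)$ has the form $\Ocal_L(-\xi_0)$ with Fitting support $L$ and $\xi_0\subset L$ an effective degree-$10$ divisor, so $p$ is surjective with fiber over $\Fcal$ equal to the linear system $|\xi_0|\cong\Pbb^8$ (Riemann-Roch on the genus-two curve $L$ gives $h^0(\Ocal_L(\xi_0))=9$, since $\deg=10>2g-2$ forces non-speciality). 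To sheafify this, I would apply $R\mathcal{H}om(-,\Ocal_S(-1))$ to $0\to\Ocal_L(-\xi)\to\Ocal_L\to\Ocal_\xi\to 0$: Grothendieck-Serre duality gives $R\mathcal{H}om(\Ocal_L,\Ocal_S)\cong\Ocal_L(L)[-1]$, which twists (via $L\sim H$) to $\Ocal_L[-1]$, while $R\mathcal{H}om(\Ocal_\xi,\Ocal_S(-1))\cong\Ocal_\xi[-2]$; assembling the triangle yields
\begin{equation*}
R\mathcal{H}om(\Ocal_L(-\xi),\Ocal_S(-1))[1]\cong\Ocal_L(\xi).
\end{equation*}
Cohomology and base change then shows $\mathscr{E}_0$ is locally free of rank $9$ with fiber $H^0(\Ocal_L(\xi_0))$ over $\Fcal$; the tautological section on $\Pbb(\mathscr{E}_0)$ cuts out a relative length-$10$ subscheme in the universal Fitting support, producing an inverse morphism $\Pbb(\mathscr{E}_0)\to W_0$ to $p$.

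\textbf{Part (ii).} For $\xi\in W_i\setminus\overline{W_{i-1}}$, take the maximal saturated $\xi'\subset\xi$ lying on some line $L$; the stratum hypotheses force $|\xi'|=10-i$, and I would verify that $L$, $\xi'$, and $\xi'':=\xi\setminus\xi'$ are all uniquely determined by $\xi$. Then $\xi\mapsto(\xi'',\Ocal_L(-\xi'))$ lands in the open subset $U_i\subset S^{[i]}\times M(0,1,i-11)$ cut out by $\supp(\xi'')$ being disjoint from the Fitting support of the sheaf, together with the maximality requirement that no enlargement $\xi'\cup\zeta$, $\emptyset\neq\zeta\subset\xi''$, lies on any line. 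The fiber over $(\xi'',\Fcal=\Ocal_L(-\xi'_0))$ is the open sublocus of $|\xi'_0|$ consisting of divisors disjoint from $\xi''$, which by Riemann-Roch is a $\Pbb^{8-i}$ (since $\deg=10-i>2g-2$ remains non-special for $i=1,2,3$); repeating the duality calculation of part~(i) with the new numerics produces a rank-$(9-i)$ vector bundle on $U_i$ whose projectivization is $W_i\setminus\overline{W_{i-1}}$.

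\textbf{Main obstacle.} The essential difficulty lies in part~(ii): one must (a) show that the image $U_i$ is genuinely open (rather than only constructible) in $S^{[i]}\times M(0,1,i-11)$, so that the combinatorial conditions defining membership in $W_i\setminus\overline{W_{i-1}}$ translate to algebraic open conditions on the base, and (b) realize the decomposition $\xi=\xi'\sqcup\xi''$ in flat families, so that the projective-bundle argument of part~(i) globalizes over $U_i$. In part~(i) the corresponding base-change step for local freeness of $\mathscr{E}_0$ is by comparison routine, once the duality computation forces the relevant higher direct images to vanish.
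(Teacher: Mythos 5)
Your plan is correct and follows the same skeleton as the paper's proof: the same classifying morphism $\xi\mapsto(\xi'',\Ical_{\xi'/L})$ to an open subset $U_i\subset S^{[i]}\times M(0,1,i-11)$, the same uniqueness argument resting on the fact that two distinct lines in $|H|$ meet in a length-two subscheme, and the same projective-bundle conclusion. The one real difference is how the fibers are identified: the paper realizes the fiber over $(\eta,\Ecal)$ as $\Pbb(\Ext^1(\Ecal,\Ical_\eta(-1)))$, parametrizing extensions $0\to\Ical_\eta(-1)\to\Ical_\xi\to\Ecal\to 0$ on $S$, whereas you identify it with the complete linear system $|\xi'_0|$ on the curve $L$ via relative duality; these agree by the local-to-global Ext spectral sequence (since $\supp(\eta)\cap L=\emptyset$ gives $\Ext^1_S(\Ecal,\Ical_\eta(-1))\cong H^0(L,\RExt^1(\Ecal,\Ocal_S(-1)))$), and both compute the rank $9-i$ from non-speciality of degree-$(10-i)$ divisors on a genus-two curve. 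The extension-theoretic route buys a cleaner treatment of exactly the two obstacles you flag: the universal extension over $\Pbb(\Esc_i|_{U_i})$ immediately gives a morphism to $S^{[10]}$, and one only checks that a non-split extension is torsion free (automatic since $\supp(\eta)\cap\Supp(\Ecal)=\emptyset$) and lands outside $\overline{W_{i-1}}$; your route must instead produce the universal divisor from the tautological section, and must treat non-locally-free $\Fcal$ on singular lines by reading $|\xi'_0|$ as $\Pbb(\Hom_L(\Fcal,\Ocal_L))$. Two points to tighten: (a) in part (ii) the fiber is not a proper open sublocus of $|\xi'_0|$ --- since $\xi'\subset L$ and $\supp(\xi'')\cap L=\emptyset$ on $U_i$, every divisor in $|\xi'_0|$ is automatically disjoint from $\xi''$, and this matters, as a genuinely open subset of $\Pbb^{8-i}$ would not yield a projective bundle; (b) your ``maximality requirement'' is phrased in terms of $\xi'$, which is fiber rather than base data, but it is automatic: a length-$(11-i)$ subscheme $\eta'\subset\xi$ on a line $L'$ satisfies $l(\eta'\backslash\xi'')\geq 11-2i\geq 5>2$ with $\eta'\backslash\xi''\subset L\cap L'$, forcing $L'=L$, hence $\eta'\subset\xi'$, contradicting $l(\xi')=10-i$. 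This is exactly the computation the paper carries out, so nothing essential is missing.
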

\begin{proof}
(i) The proof of the first part is same as the proof of \cite[Proposition 4.4]{Hel20}.

\vspace*{2mm}
\noindent(ii) Fix $i\in\{1,2,3\}$. Let $V_{i}\subset S^{[i]}\times|H|$ be the open set parametrizing pairs $(\xi, L)$ such that $\xi\cap L=\emptyset$. Define $U_{i}$ to be the preimage of $V_{i}$ by the support map $S^{[i]}\times M(0,1,i-11)\to S^{[i]}\times|H|$. For $\xi\in W_i\backslash\overline{W_{i-1}}$, there exists a saturated subscheme $\zeta$ of length $i$ so that $\xi\backslash\zeta\subset L$ for some $L\in |H|$. We claim that both $\zeta$ and $L$ are unique. Suppose $\zeta'\subset \xi$ satisfies $l(\zeta')=i$ and $\xi\backslash\zeta'\subset L'$ for $L'\in |H|$. Then $\xi\backslash(\zeta\cup\zeta')$ is a subscheme of $\xi$ so that $l(\xi\backslash(\zeta\cup\zeta'))\geq 10-2i\geq4$. On the other hand, $\xi\backslash(\zeta\cup\zeta')\subset L\cap L'$. If $L\neq L'$ then $L\cap L'$ is a length two subscheme of $S$. This contradiction proves that $L=L'$.  Next note that $\zeta\backslash\zeta'\subset \xi\backslash\zeta'\subset L'=L$, so if $\zeta\backslash\zeta'\neq \emptyset$ then $(\xi\backslash\zeta) \cup (\zeta\backslash\zeta')\subset L$ contradicts our assumption that $\xi\notin \overline{W_{i-1}}$. This proves the claim. As a result, we get a well-defined morphism
\begin{eqnarray*}
\psi_i: W_i\backslash\overline{W_{i-1}} & \to & S^{[i]}\times M(0,1,i-11), \\
\xi & \mapsto & (\zeta,\Ical_{(\xi\backslash\zeta)/L}).
\end{eqnarray*}
It is clear that the image is contained in $U_i$. We now show that $\psi_i$ is surjective onto $U_i$ and that the fiber over each point is isomorphic to $\Pbb^{8-i}$. We first note that if 
$\psi_i(\xi)=(\zeta,\Ical_{(\xi\backslash\zeta)/L})$ then we have a short exact sequence
\begin{align*}
    0\to \Ical_\zeta(-1)\to\Ical_\xi\to\Ical_{(\xi\backslash\zeta)/L}\to 0.
\end{align*}
On the other hand, it is easy to see that for any $(\eta,\Ecal)\in U_i$, $\Ext^1(\Ecal,\Ical_\eta(-1))\cong\C^{9-i}$. For any non-split extension
\begin{align*}
    0\to \Ical_\eta(-1)\to I \to\Ecal\to 0,
\end{align*}
$I$ must be a torsion-free sheaf with Mukai vector $(1,0,-9)$, since $\eta\cap \Supp(\Ecal)=\emptyset$. Hence $I$ is the ideal sheaf of a length $10$ subscheme $\xi'$, with $\eta\subset \xi'$ as a saturated subscheme. It is clear that $\xi'\in W_i$. If $\xi'\in \overline{W_{i-1}}$ then there would exist a subscheme $\eta'\subset\xi'$ of length $11-i$ such that $\eta'$ lies on a line. Consider $\eta'\backslash\eta$, which is a closed subscheme of $\xi'\backslash \eta\subset \Supp(\Ecal)$ and has length $\geq 11-2i\geq 5$. Since any subscheme of length $3$ or more is contained in at most one line, we must have $\eta'\subset \Supp(\Ecal)$. This contradicts the fact that $\eta\cap \Supp(\Ecal)=\emptyset$, as $l(\xi'\backslash\eta)=10-i$. Hence $\xi'\in W_i\backslash\overline{W_{i-1}}$. It follows that $\psi_i$ is surjective onto $U_i$ and the fiber over any $(\eta,\Ecal)\in U_i$ is $\Pbb(\Ext^1(\Ecal,\Ical_\eta(-1)))\cong \Pbb^{8-i}$.
\end{proof}

\begin{rem}
Similarly to part (i), the $\Pbb^6$-bundle structure of  $W_2\backslash\overline{W_1}$ over $U_2$ can be realized as the projectivization of a vector bundle on $U_2$. More precisely, let $\mathscr{U}_{-9}$ be the universal sheaf on $M(0,1,-9)\times S$ and $\mathscr{I}_{2}$ be the universal ideal sheaf on $S^{[2]}\times S$. Define
$$\mathscr{E}_{2}:=p_{12*}R\mathcal{H}om(p^*_{23}\mathscr{U}_{-9},p_3^*\Ocal_S(-1)\otimes p^*_{13}\mathscr{I}_{2})[1],$$
where $p_{12}$, $p_{13}$, and $p_{23}$ are the obvious projections from $S^{[2]}\times M(0,1,-9)\times S$. Then one can easily adapt the proof above to show that $\mathscr{E}_{2}$ is a vector bundle of rank $7$ on $U_2$ and $W_2\backslash\overline{W_{1}}\cong\Pbb(\mathscr{E}_{2}|_{U_{2}})$.

For $i=1$ and $3$ we only have twisted universal sheaves on $M(0,1,-11+i)\times S$. In these cases $W_i\backslash \overline{W_{i-1}}$ can be viewed as the projectivization of a twisted vector bundle on $U_i$.
\end{rem}

Next we describe $W_4$ and $\Wcal_0$. The reason for describing them together is that their strict transforms are the two irreducible components of an exceptional locus in Theorem \ref{main}.
\begin{prop}\label{part2}
    \begin{enumerate}[(i)]
         \item The variety $W_4\backslash(\Wcal_0\cup \overline{W_3})$ is a $\Pbb^4$-bundle over an open subset in $S^{[4]}\times M(0,1,-7)$. More precisely,  let $\Usc_{-7}$ be the universal sheaf on $M(0,1,-7)\times S$ and $\Isc_{4}$ the universal ideal sheaf on $S^{[4]}\times S$. Define $$\Esc_{4}:=p_{12*}R\mathcal{H}om(p^*_{23}\Usc_{-7},p_3^*\Ocal_S(-1)\otimes p^*_{13}\Isc_{4})[1].$$
        Then $\Esc_{4}$ is a vector bundle of rank $5$ on an open subset $U_{4}\subset S^{[4]}\times M(0,1,-7)$ and $W_{4}\backslash (\Wcal_0\cup \overline{W_3})=\Pbb(\Esc_{4}|_{U_4})$.\ 
        \item The variety $\Wcal_0\backslash \overline{W_4}$ is generically a $\Pbb^5$-bundle over $M^{st}(0,2,-14)$.
        \item Let $N$ be the moduli space parametrizing pairs $(\xi_4,L)$ such that $\xi_4\subset L\in|H|$ is a subscheme of length $4$. Then there is a $\Pbb^4$-bundle $\Pbb(\Esc_5|_{U_5})$ over an open subset $U_5\subset N\times M(0,1,-7)$ and a generically injective morphism from $\Pbb(\Esc_5|_{U_5})$ onto $(\Wcal_0\cap W_4)\backslash \overline{W_3}$.
    \end{enumerate}
\end{prop}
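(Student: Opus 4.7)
The plan is to mirror the strategy of Proposition~\ref{part1}: for each stratum, canonically associate extension or divisor data to a point, prove that this data is unique, and realize the stratum as a projective bundle via an $\Ext$ computation together with cohomology-and-base-change.

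For part (i), I argue as in Proposition~\ref{part1}(ii). For $\xi \in W_4 \setminus (\Wcal_0 \cup \overline{W_3})$, since $\xi \notin \overline{W_3}$ the maximal saturated portion of $\xi$ on any line has length exactly $6$, which uniquely pins down both the line $L \in |H|$ and the complementary saturated length-$4$ subscheme $\zeta \subset \xi$. This defines $\psi_4(\xi) = (\zeta, \Ical_{(\xi \setminus \zeta)/L}) \in S^{[4]} \times M(0,1,-7)$. Conversely, given $(\zeta, \Ecal)$ and a class in $\Ext^1(\Ecal, \Ical_\zeta(-1))$, the middle term of the extension $0 \to \Ical_\zeta(-1) \to \Ical_\xi \to \Ecal \to 0$ is the ideal sheaf of some $\xi \in S^{[10]}$. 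A Mukai-pairing computation gives $\chi(\Ecal, \Ical_\zeta(-1)) = -5$, and on the open locus $U_4 \subset S^{[4]} \times M(0,1,-7)$ where $\zeta \cap \supp(\Ecal) = \emptyset$ and the resulting $\xi$ avoids $\Wcal_0 \cup \overline{W_3}$ (which translates to $\zeta$ not lying on a line), both $\Hom(\Ecal, \Ical_\zeta(-1))$ and $\Ext^2(\Ecal, \Ical_\zeta(-1))$ vanish, so $\ext^1 = 5$. Cohomology-and-base-change then identifies $\Esc_4|_{U_4}$ as a rank-$5$ vector bundle whose projectivization is $W_4 \setminus (\Wcal_0 \cup \overline{W_3})$.

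For part (ii), Lemma~\ref{linear systems}(4) combined with $\overline{W_2} \subset \overline{W_4}$ forces $h^0(\Ical_\xi(2)) = 1$ for $\xi \in \Wcal_0 \setminus \overline{W_4}$, so the conic $Q'$ is unique. The assignment $\xi \mapsto \Ical_{\xi/Q'}$ then lands in $M^{st}(0,2,-14)$ on the open locus where $Q'$ is an integral smooth curve of genus $5$, and the fiber over $M = \Ical_{\xi/Q'}$ is the complete linear system $|M^{-1}|$ of effective degree-$10$ divisors on $Q'$. By Riemann-Roch, $h^0(M^{-1}) - h^1(M^{-1}) = 10 - 5 + 1 = 6$; for generic non-special $M^{-1}$ one has $h^1 = 0$, so the fiber is $\Pbb^5$.

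For part (iii), the key structural observation is that if $\xi \in (\Wcal_0 \cap W_4) \setminus \overline{W_3}$ carries a length-$6$ saturated subscheme $\xi_{L_1}$ on a line $L_1$ and also lies on a conic $Q' \in |2H|$, then $Q'$ must be reducible with $L_1$ as a component, because otherwise $Q' \cap L_1$ has length at most $(2H)\cdot H = 4 < 6$. Writing $Q' = L_1 + L_2$, the complement $\xi_4 := \xi \setminus \xi_{L_1}$ is a length-$4$ subscheme supported scheme-theoretically on $L_2$, giving a point $((\xi_4, L_2), \Ical_{\xi_{L_1}/L_1}) \in N \times M(0,1,-7)$. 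The inverse extension construction $0 \to \Ical_{\xi_4}(-1) \to \Ical_\xi \to \Ecal \to 0$ together with the same Mukai-pairing calculation as in part (i) yields $\ext^1 = 5$ on the open subset $U_5 \subset N \times M(0,1,-7)$ where $L_1 \neq L_2$ and where the supports of $\xi_4$ and $\Ecal$ avoid $L_1 \cap L_2$, producing the $\Pbb^4$-bundle $\Pbb(\Esc_5|_{U_5})$. Generic injectivity of the induced morphism to $(\Wcal_0 \cap W_4) \setminus \overline{W_3}$ follows because, for generic $\xi$ in the image, $L_1$ is the unique line carrying a length-$6$ saturated subscheme of $\xi$, which determines the full decomposition. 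The chief technical obstacle lies in part (iii): one must distinguish the asymmetric roles of $L_1$ and $L_2$, track the saturation structure so that $\xi_4$ is recovered as a subscheme of the line $L_2$ (rather than merely a length-$4$ subscheme of $S$ set-theoretically supported on $L_2$), and pin down the open loci $U_4$ and $U_5$ precisely so that the cohomology-and-base-change arguments for $\Esc_4$ and $\Esc_5$ apply.
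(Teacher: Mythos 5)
Your proposal follows the same overall strategy as the paper's proof: extract canonical decomposition data from each point, prove uniqueness, and realize the stratum as a projective bundle via an $\Ext$ computation. One point in part (i) needs repair, however. You assert that $\xi\notin\overline{W_3}$ alone ``uniquely pins down both the line $L$ and the complementary saturated length-$4$ subscheme $\zeta$''. This is exactly where the $i=4$ case departs from Proposition~\ref{part1}(ii): for $i\leq 3$ uniqueness holds because $\xi\backslash(\zeta\cup\zeta')$ would have length $\geq 10-2i\geq 4>2=l(L\cap L')$, but at $i=4$ this bound degenerates to $2$, and there genuinely exist $\xi\notin\overline{W_3}$ carrying two distinct saturated length-$6$ subschemes on two distinct lines (four points on each line together with the two points of $L\cap L'$). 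Such $\xi$ lie on the reducible conic $L+L'$, hence in $\Wcal_0$; so uniqueness does hold on $W_4\backslash(\Wcal_0\cup\overline{W_3})$, but it is the exclusion of $\Wcal_0$, not of $\overline{W_3}$, that saves the argument. The paper makes this explicit in the injectivity step for $\psi_4$ (the only failure mode is $\zeta=\xi\cup\xi'\cup(\Supp(\Ecal)\cap\Supp(\Ecal'))$ lying on a reducible conic), and the same ambiguity is the reason $\Wcal_0\cap W_4$ must be treated as a separate stratum in part (iii), where the map is only \emph{generically} injective. You should state this dependence on the $\Wcal_0$ exclusion explicitly, since otherwise the identical argument would appear to apply to all of $\overline{W_4}\backslash\overline{W_3}$, which is false.

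In part (ii) you take a mildly different route for the fiber: you identify it with the linear system $|M^{-1}|$ of effective degree-$10$ divisors on a smooth conic of genus $5$ and apply Riemann--Roch (in fact no genericity is needed there, since $h^1(M^{-1})=h^0(\omega_{Q'}\otimes M)$ vanishes automatically because $\deg(\omega_{Q'}\otimes M)=8-10<0$). The paper instead computes $\Ext^1_S(\Ecal,\Ocal_S(-2))\cong\C^6$ via the Mukai pairing and stability, and then shows that non-torsion-free extensions occur only when $\Supp(\Ecal)$ is non-integral; this gives the $\Pbb^5$-bundle structure over the larger locus of integral conics and locates precisely where it degenerates. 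Both versions establish the stated ``generically a $\Pbb^5$-bundle''. Your part (iii) --- including the observation that a conic through $\xi$ must contain $L_1$ as a component because $l(Q'\cap L_1)\leq 4<6$, the scheme-theoretic containment $\xi_4\subset L_2$, and the identification of the locus where injectivity fails --- matches the paper's argument; the only difference is that the paper additionally realizes $N$ itself as a $\Pbb^2$-bundle over $M(0,1,-7)$ via extensions of $M(0,1,-7)$-sheaves by $\Ocal_S(-2)$, which you take as given from the statement.
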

\begin{proof}
\noindent(i)  Let $V_4\subseteq S^{[4]}\times |H|$ be the open set parametrizing pairs $(\xi,L)$ such that $\xi\cap L=\emptyset$ and $H^0(\Ical_\xi(1))=0$. Define $U_4$ as the preimage of $V_4$ by the support map $S^{[4]}\times M(0,1,-7)\to S^{[4]}\times |H|$. Then one proceeds as before to show that  $\Esc_4$ is a vector bundle of rank $5$ on $U_4$ and that the universal extension defines a map
\begin{align*}
    \psi_4:\Pbb(\Esc_4|_{U_4})\to \hilb
\end{align*}
whose image lies in $W_4$.

We claim that $\psi_4$ is an isomorphism onto $W_4\backslash(\Wcal_0\cup \overline{W_3})$. First we show that $\mathrm{im}(\psi_4)=W_4\backslash(\Wcal_0\cup \overline{W_3})$. Assume $\xi\in \mathrm{im}(\psi_4)$. Then $\Ical_\xi$ fits into a short exact sequence
\begin{align*}
    0\to \Ical_{\xi_4}(-1)\to \Ical_\xi\to \Ecal\to 0
\end{align*}
where $\xi_4\subset \xi$ is a subscheme of length $4$ and $\Supp(\xi_4)\cap\Supp(\Ecal)=\emptyset$ (hence $\xi_4\subset\xi$ is saturated). As in the proof of Proposition \ref{part1}(ii), $\xi\notin \overline{W_3}$. Suppose $\xi\in\Wcal_0$. By Lemma \ref{linear systems}(iii), any curve $Q\in|2H|$ containing $\xi\backslash\xi_4$ must be of the form $L_1+L_2$ where $L_i\in |H|$ and $L_1=\Supp(\Ecal)$. As a result, $\xi_4$ must be a subscheme of $L_2$. This contradicts our choice of $V_4$. We have shown that $\mathrm{im}(\psi_4)\subseteq W_4\backslash(\Wcal_0\cup \overline{W_3})$. Next suppose that $\xi\in W_4\backslash(\Wcal_0\cup \overline{W_3})$. Since $\xi\in W_4\backslash\overline{W_3}$, we have a saturated subscheme $\xi_4\subset \xi$ such that $\Ical_\xi$ fits into
\begin{align*}
    0\to \Ical_{\xi_4}(-1)\to \Ical_\xi\to \Ecal\to 0
\end{align*}
where $\Ecal$ is pure of dimension one (hence stable) and $\Supp(\xi_4)\cap\Supp(\Ecal)=\emptyset$. It remains to show that $H^0(\Ical_{\xi_4}(1))=0$. Suppose otherwise; then the composition of any nonzero (hence injective) $\Ocal_S(-2)\to \Ical_{\xi_4}(-1)$ with $\Ical_{\xi_4}(-1)\to \Ical_\xi$ implies $\xi\in\Wcal_0$.

Secondly, we note that $\Hom(\Ical_{\xi_4}(-1),\Ecal)=0$ for any $(\xi_4,\Ecal)\in U_4$, so $\psi_4$ is a local isomorphism. Lastly, we show that $\psi_4$ is injective. Suppose that $\Ical_\zeta$ with $\zeta\in W_4\backslash(\Wcal_0\cup \overline{W_3})$ can be written as two different extensions, over $(\xi,\Ecal)$ and $(\xi',\Ecal')$ in $U_4$. By the definition of $U_4$, we can use $\zeta\backslash(\xi\cup\xi')$ to denote the subscheme of $\zeta$ supported on the complement of the union of the support of $\xi$ and $\xi'$, and then $l(\zeta\backslash(\xi\cup\xi'))\geq2$. If $h^0(\Ical_{\zeta\backslash(\xi\cup\xi')}(1))=1$ (for example, this happen when $l(\zeta\backslash(\xi\cup\xi'))\geq3$) then one can use the argument from the proof of Proposition \ref{part1}(ii) to conclude injectivity. Suppose $h^0(\Ical_{\zeta\backslash(\xi\cup\xi')}(1))\geq2$ and $\Supp(\Ecal)\neq\Supp(\Ecal')$. Then $\zeta=\xi\cup\xi'\cup (\Supp(\Ecal)\cap\Supp(\Ecal'))$ and $\zeta$ lies in a reducible curve in $|2H|$ whose support is $\Supp(\Ecal)\cup\Supp(\Ecal')$. Hence $\zeta\in \Wcal_0$, contradicting our assumption on $\zeta$.

\vspace*{2mm}
\noindent(ii) Let $\xi\in\Wcal_0\backslash \overline{W_4}$. By Lemma \ref{linear systems}(4), there exists a unique (up to a scalar) injection $\Ocal_S(-2)\to \Ical_\xi$. Then $\Ical_\xi$ fits into an exact sequence
\begin{align*}
    0\to\Ocal_S(-2)\to\Ical_\xi\to \Ecal\to 0.
\end{align*}
Note that $\Ecal$ must be pure of dimension one by the Bogomolov inequality. If $\Ecal$ is not stable, let $\Ecal''$ be its maximal destabilizing quotient. Then $\vbf(\Ecal'')=(0,1,-11+i)$ where $i\in\{0,1,2,3,4\}$. This would imply that $\xi\in \overline{W_4}$. Hence $\Ecal\in M^{st}(0,2,-14)$ and we have a morphism
\begin{align*}
   \phi: \Wcal_0\backslash\overline{W_4}\to  M^{st}(0,2,-14).
\end{align*}
For any $\Ecal\in M^{st}(0,2,-14)$, we have $\Ext^1(\Ecal,\Ocal(-2))\cong\C^6$. If a non-split extension of $\Ecal$ and $\Ocal(-2)$ is torsion free, then it is the ideal sheaf of a length $10$ subscheme $\xi'$ in $S$. If $\xi'\in\overline{W_4}$ then there exists $L\in|H|$ and $i\in\{0,1,2,3,4\}$ such that we have 
\begin{align}\label{eq2}
    0\to \Ical_{\zeta'}(-1)\to \Ical_{\xi'}\to \Ical_{\xi'\cap L/L}\to 0,
\end{align}
where $\xi'\cap L$ ($\xi'\times_SL$ to be precise) is a subscheme of length $10-i$ of both $\xi'$ and $L$, and $\zeta'$ is of length $i$. By Lemma \ref{linear systems}(3), the map $\Ocal_S(-2)\to \Ical_{\xi'}$ factors through the morphism $\Ical_{\zeta'}(-1)\to \Ical_{\xi'}$ of (\ref{eq2}). Hence $\Ecal$ has $\Ical_{\xi'\cap L/L}$ as a quotient, which contradicts the fact that $\Ecal$ is stable and $\vbf(\Ical_{\xi\cap L/L})=(0,1,-11+i)$. As a result, we see that $\xi'\in \Wcal_0\backslash \overline{W_4}$.

Suppose $T$ is a non-split extension of $\Ecal$ and $\Ocal_S(-2)$,
\begin{align*}
    0\to \Ocal_S(-2)\to T\to \Ecal\to 0,
\end{align*}
with nontrivial torsion part. Then $T_{tor}\subset \Ecal$ must have first Chern class $H$. Since $T/T_{tor}$ is torsion free, $\Ecal/T_{tor}$ is pure of dimension one.  Alternatively, we can write $T$ as non-split extensions
$$0\to (\Ocal_S(-2)\oplus T_{tor})\to T\to \Ecal/T_{tor}\to 0$$
and
$$0\to T_{tor}\to T \to T_{tf}\to 0,$$
where $T_{tf}$ is the torsion free quotient of $T$, which is a non-split extension of $\Ecal/T_{tor}$ by $\Ocal_S(-2)$. By the stability of $\Ecal$ and $T_{tf}$, we see that $T_{tor}\in M(0,1,j)$ where $-11\leq j\leq -8$. We note that such an extension with nontrivial torsion part exists only when $\Supp(\Ecal)$ is not integral. Let $M^{st}_0(0,2,-14)$ denote the open subset in $M^{st}(0,2,-14)$ parametrizing sheaves with integral support. Then the above shows that  $\phi|_{\phi^{-1}(M^{st}_0(0,2,-14))}$ is surjective onto $M^{st}_0(0,2,-14)$ with fiber over each $\Ecal$ being isomorphic to $\Pbb(\Ext^1(\Ecal,\Ocal(-2)))\cong\Pbb^5$. This proves that $\Wcal_0\backslash\overline{W_4}$ is generically a $\Pbb^5$-bundle over $M^{st}(0,2,-14)$.  Note that over the complement of $M^{st}_0(0,2,-14)$, $\phi$ is surjective with fiber  over $\Ecal'$ being a subset of $\Pbb(\Ext^1(\Ecal',\Ocal(-2)))\cong\Pbb^5$.

\vspace*{2mm}
\noindent(iii) We describe $N$ first. Let $\Usc_{-7}$ be the universal sheaf on $M(0,1,-7)\times S$, and define
$$\mathcal{N}:=p_{1*}R\mathcal{H}om(\Usc_{-7},p_2^*\Ocal_S(-2))[1].$$
It is easy to see that $\mathcal{N}$ is a vector bundle of rank $3$ on $M(0,1,-7)$ and $N=\Pbb(\mathcal{N})$ parametrizes non-split extensions of the form
\begin{align*}
    0\to \Ocal_S(-2)\to \Ical\to \Ecal\to 0,
\end{align*}
where $\Ecal\in M(0,1,-7)$. As before, we see that $\Ical$ is torsion free, hence of the form $\Ical_{\xi_4}(-1)$ where $\xi_4\subset \Supp(\Ecal)$ is a subscheme of length $4$. Moreover, the natural morphism $N\to S^{[4]}$ is a closed embedding and $N$ parametrizes length $4$ subscheme of $S$ lying on a line $L\in |H|$. Note that if $\Isc_4$ is the universal family on $S^{[4]}\times S$, then $(\Isc_4\otimes p_2^*\Ocal_S(-1))|_{N\times S}$ is a universal family parametrizing $\Ical$ of the form above.\ 

Let $U_5\subset N\times M(0,1,-7)$ be the open subscheme parametrizing pairs $(\xi_4,\Ecal')$, where $\xi_4\in N$ and $\Ecal'\in M(0,1,-7)$, such that  $\xi_4\cap\Supp(\Ecal')=\emptyset$. Define 
$$\Esc_5:=p_{12*}R\mathcal{H}om(p^*_{23}\Usc_{-7},p^*_{13}((\Isc_4\otimes p_2^*\Ocal_S(-1))|_{N\times S}))[1].$$
It is easy to see that $\Esc_5$ is a vector bundle of rank $5$ on $U_5$ and $\Pbb(\Esc_5|_{U_5})$ parametrizes non-split extensions of  $\Ecal'$ with $\Ical_{\xi_4}(-1)$ for $(\xi_4,\Ecal')\in U_5$. As in part (ii), we can show that a non-split extension
\begin{align*}
    0\to \Ical_{\xi_4}(-1)\to \Ical\to \Ecal'\to 0
\end{align*}
is torsion free, and hence $\Ical=\Ical_\zeta$ for some $\zeta\in\hilb$. Moreover, we see that $\zeta\backslash\xi_4\subset\Supp(\Ecal')$ is a saturated subscheme of $\zeta$, so $\zeta\in W_4$. Since there is an inclusion $\Ocal(-2)\to \Ical_\zeta$, we also have $\zeta\in \Wcal_0$. By our choice of $U_5$, we know that $\zeta\notin \overline{W_3}$. So the universal extension defines a map
\begin{align*}
    \psi_5: \Pbb(\Esc_5|_{U_5})\to \hilb
\end{align*}
whose image is in $(W_4\cap \Wcal_0)\backslash\overline{W_3}$. Since $\Hom(\Ical_{\xi_4}(-1),\Ecal')=0$ for $(\xi_4,\Ecal')\in U_5$, $\psi_5$ is a local isomorphism. We claim that $\psi_5$ maps onto $(W_4\cap \Wcal_0)\backslash\overline{W_3}$. Suppose that $\zeta\in (W_4\cap \Wcal_0)\backslash\overline{W_3}$; then there exists a saturated subscheme $\zeta'\subset \zeta$ of length $6$ contained in a line $L_1\in |H|$. By Lemma \ref{linear systems}(iii), any conic containing $\zeta$ (hence containing $\zeta'$) must be of the form $L_1+L_2$, where $L_2\in |H|$ with $L_2\neq L_1$. As $L_2$ must contain the length $4$ subscheme $\zeta\backslash\zeta'$, the conic $L_1+L_2$ is uniquely determined by the fact that it contains $\zeta$. As $\zeta\notin \overline{W_3}$, we have $(\zeta\backslash\zeta')\cap \Supp(L_1)=\emptyset$.  There exists an inclusion 
\begin{align*}
    \Ical_{\zeta\backslash\zeta'}(-1)\hookrightarrow \Ical_\zeta
\end{align*}
whose cokernel $\Ecal'$ is the ideal sheaf of $\zeta'$ in $L_1$. Thus $\Ecal'$ is pure of dimension one with Chern character $(0,1,-7)$, and hence stable. To see that $\psi_5$ is generically injective, we simply notice that $\psi_5$ fails to be injective on $\Ical_\zeta$ only when $\zeta=\zeta_1\cup\zeta_2\cup\zeta_3$, where $\zeta_1$ and $\zeta_2$ are both saturated of length $4$ and contained in lines $L_1$ and $L_2$, respectively, and $\zeta_3=L_1\cap L_2$.
\end{proof}

Lastly, we give a description of $\Wcal_1$. We will need a birational model $X_5$ of $\hilb$ defined in Section~4. It is obtained from $\hilb$ by a sequence of flops at the $W_i$'s and $\Wcal_0$ (see Theorem \ref{main} for details). 
\begin{prop}\label{part3}
    Let $\mathscr{V}'$ be the universal sheaf on $M(0,2,-13)\times S$ and let $\Isc_{\Delta}$ be the ideal sheaf of the diagonal on $S\times S$. Define
    $$\Fsc':=p_{12*}R\mathcal{H}om(p^*_{23}\mathscr{V}',p_3^*\Ocal_S(-2)\otimes p^*_{13}\Isc_{\Delta})[1].$$
    Then $\Fsc'$ is a vector bundle of rank $5$ on an open subset $U'\subset S\times M(0,2,-13)$. Moreover, there exists a generically injective morphism $\Pbb(\Fsc'|_{U'})\to X_5$ whose image contains $\Wcal_1\backslash(\Wcal_0\cup \overline{W_4})$ as an open subscheme.
\end{prop}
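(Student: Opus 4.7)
The plan is to follow the template of Proposition~\ref{part2}(ii),(iii): exhibit a short exact sequence realizing members of $\Wcal_1\backslash(\Wcal_0\cup\overline{W_4})$ as extensions whose torsion factor lies in $M(0,2,-13)$, reinterpret $\Pbb(\Fsc'|_{U'})$ as the projectivization of the relevant $\Ext^1$, and match the two descriptions. Concretely, for $\xi\in\Wcal_1\backslash(\Wcal_0\cup\overline{W_4})$ with saturated $\xi'\subset\xi$ of length nine lying on a conic $Q''\in|2H|$, set $p:=\xi\backslash\xi'$; for generic $\xi$ we have $p\notin Q''$, hence the short exact sequence
\begin{equation*}
    0\to \Ical_p\otimes\Ocal_S(-2H)\to \Ical_\xi\to \Ical_{\xi'/Q''}\to 0,
\end{equation*}
with $\Ical_{\xi'/Q''}\in M(0,2,-13)$ (stable since the Mukai vector is primitive and $Q''$ is integral for generic $\xi$). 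This is precisely the extension structure to be parametrized by $\Pbb(\Fsc'|_{U'})$.

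First I would take $U'\subset S\times M(0,2,-13)$ to be the open subset of pairs $(p,\Ecal)$ with $\Supp(\Ecal)$ an integral conic and $p\notin\Supp(\Ecal)$. A Mukai-pairing calculation gives
\begin{equation*}
    -\chi(\Ecal,\Ical_p(-2H))=((0,2H,-13),(1,-2H,4))=-4H^2+13=5.
\end{equation*}
On $U'$ we have $\Hom(\Ecal,\Ical_p(-2H))=0$ since $\Ical_p(-2H)$ is torsion-free while $\Ecal$ is pure one-dimensional, and by Serre duality $\Ext^2(\Ecal,\Ical_p(-2H))\cong\Hom(\Ical_p(-2H),\Ecal)^\vee=0$: any nonzero map $\Ical_p(-2H)\to\Ecal$ would have torsion image, yielding a proper pure one-dimensional subsheaf of $\Ecal$ and contradicting stability (integrality of $\Supp(\Ecal)$ is what rules this out). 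Hence $\Fsc'|_{U'}$ is locally free of rank five.

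Next, the universal extension on $\Pbb(\Fsc'|_{U'})$ produces a flat family of coherent sheaves with Mukai vector $(0,2,-13)+(1,-2H,4)=(1,0,-9)$. On a dense open subset the middle term is torsion-free (equivalently, the extension class does not come from $\Ext^1(T,\Ical_p(-2H))$ for any proper one-dimensional subsheaf $T\subset\Ecal$), and so is an ideal sheaf $\Ical_\zeta$ of a length-ten subscheme $\zeta\subset S$. The extension forces $\zeta\in\Wcal_1$, and the defining conditions on $U'$ rule out $\zeta\in\Wcal_0$ and $\zeta\in\overline{W_4}$. Since $\Wcal_1\backslash(\Wcal_0\cup\overline{W_4})$ is disjoint from the flop centres $W_0,\dots,W_4,\Wcal_0$ used to pass from $\hilb$ to $X_5$ in Theorem~\ref{main}, it sits inside $X_5$ as an open subscheme of the locus on which $\hilb$ and $X_5$ are canonically identified; the family thus determines the desired morphism $\Pbb(\Fsc'|_{U'})\to X_5$.

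Finally, for generic injectivity one recovers $(p,\Ical_{\xi'/Q''})$ from $\Ical_\zeta$ as follows: the conic $Q''$ containing $\xi'$ is unique (two distinct conics through $\xi'$ would share a common line containing a length-seven subscheme of $\xi$, forcing $\xi\in\overline{W_3}\subset\overline{W_4}$), the saturated length-nine subscheme $\xi'\subset\xi$ on $Q''$ is unique (any second choice $\xi''$ would give $\xi=\xi'\cup\xi''\subset Q''$ and hence $\xi\in\Wcal_0$), and then $p=\xi\backslash\xi'$ is determined. The main difficulty is to ensure that the open conditions defining $U'$ are compatible with covering an open subscheme of $\Wcal_1\backslash(\Wcal_0\cup\overline{W_4})$; the hypothesis $p\notin Q''$ and the integrality of $Q''$ each cut out proper closed subloci, but both are generic conditions on the target and the statement only claims that the image contains an open subscheme, consistent with this.
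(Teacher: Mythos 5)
Your overall strategy is the same as the paper's: realize $\xi\in\Wcal_1\backslash(\Wcal_0\cup\overline{W_4})$ as an extension of a stable $\Ecal\in M(0,2,-13)$ by $\Ical_p(-2)$, identify $\Pbb(\Fsc'|_{U'})$ with the projectivized $\Ext^1$, and match the two; your rank computation $((0,2,-13),(1,-2,4))=5$ is correct. The first genuine gap is your definition of $U'$: you additionally require $\Supp(\Ecal)$ to be an \emph{integral} conic, and you justify the resulting loss of coverage by saying ``the statement only claims that the image contains an open subscheme.'' That is a misreading: the proposition asserts that the image contains the \emph{whole} of $\Wcal_1\backslash(\Wcal_0\cup\overline{W_4})$, and that this set is open in the image. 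Points genuinely escape your construction: take $\xi'$ consisting of five general points on $L_1$ and four on $L_2$ (so no six on a line, hence $\xi\notin\overline{W_4}$), together with a general tenth point $p$; then $\Ical_{\xi'/(L_1+L_2)}$ is stable in $M(0,2,-13)$ (a destabilizing sub- or quotient sheaf would have Mukai vector $(0,1,k)$ with slope $\leq -13/4$, forcing a length $\geq 6$ subscheme on a line), $L_1+L_2$ is the only conic through $\xi'$, and $\xi\in\Wcal_1\backslash(\Wcal_0\cup\overline{W_4})$ is not in your image. The paper's $U'$ only imposes $p\notin\Supp(\Ecal)$; the vanishing $\Hom(\Ical_p(-2),\Ecal)\cong H^0(\Ecal(2))=0$ follows from stability of $\Ecal$ alone, with no integrality hypothesis.

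The second gap is the passage from the universal extension to a morphism defined on all of $\Pbb(\Fsc'|_{U'})$ with target $X_5$. You construct ideal sheaves only on the open locus where the extension is torsion-free and then assert that ``the family thus determines the desired morphism.'' Over the full $U'$ (which you need for coverage, by the previous paragraph) there are non-split extensions $T$ with nontrivial torsion, occurring exactly when $\Supp(\Ecal)$ is reducible; these are not ideal sheaves and do not lie in the locus where $\hilb$ and $X_5$ are identified. The paper shows $T$ sits in $0\to T_{tor}\to T\to T_{tf}\to 0$ with $T_{tor}\in M(0,1,j)$, $-11\leq j\leq -7$, and identifies such $T$ with points of the exceptional locus of $g_4$ in $X_5$ via the fifth-wall analysis of Theorem~\ref{main}; only then is the universal extension a family of stable objects for the chamber defining $X_5$, hence a morphism on the whole projective bundle. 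This is precisely why the target is $X_5$ rather than $\hilb$, a point your argument never engages. Finally, your injectivity argument compares two conics through the \emph{same} length-nine subscheme, but two presentations $(p,\Ecal)$ and $(p',\Ecal')$ of the same $\Ical_\zeta$ a priori share only the length-eight subscheme $\zeta\backslash(p\cup p')$, and two conics with no common component already meet in a length-eight scheme; this is why the paper restricts to the open set $\mathscr{W}$ of $\xi$ admitting no length-eight subscheme $\xi'$ with $h^0(\Ical_{\xi'}(2))\geq 2$ and concludes only generic injectivity.
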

\begin{proof}
Let $V'\subset S\times |2H|$ be the open set parametrizing pairs $(p,Q)$ such that $p\notin Q$. Define $U'$ as the preimage of $V'$ by the support map $S\times M(0,2,-13)\to S\times |2H|$. It is easy to see that if $(p,\Ecal)\in U'$ then
\begin{align*}
    R\Hom_S(\Ecal,\Ical_p(-2))[1]\cong\Ext^1(\Ecal,\Ical_p(-2))\cong\C^{5}.
\end{align*}
We conclude that $\Fsc'$ is a vector bundle of rank $4$. If a non-split extension of $\Ecal$ and $\Ical_p(-2)$ is torsion free then it must be an ideal sheaf of a length $10$ subscheme $\xi$ in $S$. It is clear that $\xi\in\Wcal_1\backslash\Wcal_0$. If $\xi\in \overline{W_4}$ then there exists $L\in |H|$ and $i\in\{0,1,2,3,4\}$ such that we have 
\begin{align}\label{eq3}
    0\to \Ical_{\xi'}(-1)\to \Ical_\xi\to \Ical_{\xi\cap L/L}\to 0,
\end{align}
where $\xi\cap L$ ($\xi\times_SL$ to be precise) is a subscheme of length $10-i$ of both $\xi$ and $L$, and $\xi'\subset\xi$ is a subscheme of length $i$. Now as $(\xi\cap L)\backslash \{p\}$ has length at least $5$, Lemma \ref{linear systems}(3) implies that $Q=\Supp(\Ecal)$ is of the form $L+L'$ for $L'\in|H|$. Note that $p\notin L\cup L'$ since $\xi\notin \Wcal_0$, and $\xi'\subset L'\cup \{p\}$. We see now that the map $\Ical_p(-2)\to \Ical_\xi$ factors through the morphism $\Ical_{\xi'}(-1)\to \Ical_\xi$ of (\ref{eq3}). Hence $\Ecal$ has $\Ical_{\xi\cap L/L}$ as a quotient, which contradicts the fact that $\Ecal$ is stable as $\vbf(\Ical_{\xi\cap L/L})=(0,1,-11+i)$. As a result, we have $\xi\in\Wcal_1\backslash(\Wcal_0\cup \overline{W_4})$.

Suppose $T$ is a non-split extension of $\Ecal$ and $\Ical_p(-2)$ with nontrivial torsion part. We argue as in the proof of Proposition \ref{part2}(ii) to show that $T$ is of the form
\begin{align*}
    0\to T_{tor}\to T\to T_{tf}\to 0,
\end{align*}
where $T_{tor}\in M(0,1,j)$ where $-11\leq j\leq -7$. Note that such $T$ lies in the exceptional locus in $X_5$ for $g_4$ by the analysis of the fifth wall in the proof of Theorem \ref{main}.

Combining the previous two paragraphs, the associated universal extension defines a map 
\begin{align*}
    \phi':\Pbb(\Fsc'|_{U'})\to X_5.
\end{align*}
Since $\Hom(\Ical_p(-2),\Ecal)=0$ for $(p,\Ecal)\in U'$, $\phi'$ is a local isomorphism. Now suppose that $\xi \in \Wcal_1\backslash(\Wcal_0\cap\overline{W_4})$. Then there exists a conic $Q\in |2H|$ such that $\xi \cap Q$ is a saturated subscheme of $\xi$ of length $9$. There is a surjection $\Ical_\xi\to \Ecal':=\Ical_{Q\cap\xi/Q}$ whose kernel is $\Ical_p(-2)$ for some point $p\notin Q$. Now that we have short exact sequence
\begin{align*}
    0\to \Ical_p(-2)\to \Ical_\xi\to \Ecal'\to 0,
\end{align*}
it remains to show that $\Ecal'$ is stable with Mukai vector $(0,2,-13)$. Note that if $\Ecal'$ is not pure then $\xi$ would be contained in a conic by the Bogomolov inequality, which contradicts our assumption. Suppose $\Ecal'$ is not stable and let $\Ecal''$ be the maximal destabilizing quotient. Then $\vbf(\Ecal'')=(0,1,-11+i)$ where $i\in\{0,1,2,3,4\}$. This would imply that $\xi\in \overline{W_4}$, violating our assumption. So $\Wcal_1\backslash(\Wcal_0\cap\overline{W_4})$ is a subset of $\mathrm{im}(\phi')$. Moreover, $\Wcal_1\backslash(\Wcal_0\cap\overline{W_4})$ is the complement of the exceptional locus mentioned in the previous paragraph in $\mathrm{im}(\phi')$, and thus $\Wcal_1\backslash(\Wcal_0\cap\overline{W_4})$ is open in $\mathrm{im}(\phi')$.

To see that $\phi'$ is generically injective define 
\begin{align*}
    \mathscr{W}=\{\xi\in\Wcal_1\backslash(\Wcal_0\cap\overline{W_4})\;|\text{ there is no length $8$ subscheme }\xi'\subset \xi \text{ with } H^0(\Ical_{\xi'}(2))\geq 2\}.
\end{align*}
Then $\mathscr{W}$ is open in $\Wcal_1\backslash(\Wcal_0\cap\overline{W_4})$, and hence open in $\mathrm{im}(\phi')$, which is irreducible. Suppose that $\xi\in \mathscr{W}$ can be expressed as an extension in two different ways, corresponding to elements of $\Ext^1(\Ecal, \Ical_p(-2))$ and $\Ext^1(\Ecal',\Ical_{p'}(-2))$. Then $\xi\backslash (p\cup p')$ is a length $8$ saturated subscheme of $\xi$ contained in both $\Supp(\Ecal)$ and $\Supp(\Ecal')$. It follows from the definition of $\mathscr{W}$ that $\Supp(\Ecal)=\Supp(\Ecal')$, and hence $p=p'$ and $\Ecal=\Ecal'$.
\end{proof}
\begin{rem}
We note that in Propositions \ref{part1}, \ref{part2} and \ref{part3}, the opens sets over which the projective bundles are defined are the largest open sets with the given properties.
\end{rem}
\begin{rem}
The dimension of $W_i$ is $12+i$ for $i\in\{0,1,2,3,4\}$. The dimension of $\Wcal_i$ is $15+i$ for $i\in\{0,1\}$. The dimension of $W_4\cap \Wcal_0$ is $14$.
\end{rem}

\subsection{Brill-Noether loci in $M$} Let $B^\circ\subset B$ be the locus of smooth curves in $B=|3H|$ and let $\mathcal{C}^\circ\to B^\circ$ be the universal curve. Let $M(0,3,k-9)^\circ$ be the preimage of $B^\circ$ for the support map $M(0,3,k-9)\to B$. Recall that we defined
\begin{align*}
    \mathrm{BN}^i(M):=\{\Ecal\in M\;|\;h^0(\Ecal)\geq i+1\}\subset M.
\end{align*}
Now we also define
\begin{align*}
    \mathrm{BN}^i_k(B^\circ):=\{\mathcal{L}\in M(0,3,k-9)^\circ\;|\;h^0(\mathcal{L})\geq i+1\}\subset M(0,3,k-9)^\circ
\end{align*}
and 
\begin{align*}
    Z_{1,3}^\circ:=\{\Lcal\in M(0,3,-7)^\circ\;|\;\Lcal\cong\Ocal_C(p_1+p_2+p_3-p_4) \text{ for some }C\in B^\circ\text{ and }p_i\in C\}.
\end{align*}
We are interested in the following sets:
\begin{align*}
    &Z_2:=\overline{\mathrm{BN}^0_2(B^\circ)}\subset M(0,3,-7),\\
    &Z_4:=\overline{\mathrm{BN}^0_4(B^\circ)}\subset M(0,3,-5),\\
    &Z_{1,3}:=\overline{Z_{1,3}^\circ}\subset M(0,3,-7),\\
    &Z_8:=\overline{\mathrm{BN}^0_8(B^\circ)}\subset M.
\end{align*}
We will treat $Z_2$, $Z_4$, and $Z_{1,3}$ as subschemes of $M$ via the following isomorphisms:
\begin{align*}
    M(0,3,-7)\to M,&\qquad \mathcal{E}\mapsto \mathcal{E}\otimes\Ocal_S(1),\\
    M(0,3,-5)\to M,&\qquad \mathcal{E}\mapsto \mathcal{E}xt^1(\mathcal{E},\Ocal_S(-1)).
\end{align*}
We also obtain an isomorphism 
\begin{align*}
    M(0,3,-7)\to M(0,3,-5),&\qquad \mathcal{E}\mapsto \mathcal{E}xt^1(\mathcal{E},\Ocal_S(-2))
\end{align*}
by composing the first morphism above with the inverse of the second.

\begin{lem}
We have $Z_2\subset \BN^2(M)$ and $Z_4$, $Z_{1,3}\subset \BN^1(M)$. We have $Z_2\subset Z_4\subset Z_8$ and $Z_2\subset Z_{1,3}\subset Z_8$. 
\end{lem}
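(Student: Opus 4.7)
The plan is to reduce all statements to computations with line bundles on a smooth cubic $C\in B^\circ$, then extend from the opens $\BN^i_k(B^\circ)$ and $Z_{1,3}^\circ$ to their closures using upper semicontinuity of $h^0$. For smooth $C\in B^\circ$ I first record two facts: $C$ has genus $10$ and canonical bundle $\omega_C=\Ocal_C(3H|_C)$ by adjunction, and $h^0(\Ocal_C(nH|_C))=n^2+2$ for $n=1,2$, which follows from the short exact sequence $0\to\Ocal_S((n-3)H)\to\Ocal_S(nH)\to\Ocal_C(nH|_C)\to 0$ via Kodaira vanishing on $S$. Next, I make the two isomorphisms explicit on a point $\iota_*\Lcal$ supported on a smooth $C$: the projection formula gives $\iota_*\Lcal\otimes\Ocal_S(1)=\iota_*(\Lcal(H|_C))$ for the first, while Grothendieck duality for $\iota$ (using $\iota^!\Ocal_S=\omega_C[-1]$) followed by a twist by $\Ocal_S(-1)$ gives $\mathcal{E}xt^1(\iota_*\Lcal,\Ocal_S(-1))=\iota_*(\Lcal^{-1}(2H|_C))$ for the second.

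With these identifications, the three Brill--Noether inclusions reduce to direct cohomology estimates. For $Z_2\subset\BN^2(M)$: a point of $\BN^0_2(B^\circ)$ has the form $\iota_*\Ocal_C(D)$ with $D\geq 0$ of degree $2$, and its image $\iota_*\Ocal_C(D+H|_C)\in M$ satisfies $h^0\geq h^0(\Ocal_C(H|_C))=3$ via the injection $\Ocal_C(H|_C)\hookrightarrow\Ocal_C(D+H|_C)$ obtained from a section of $\Ocal_C(D)$. For $Z_4\subset\BN^1(M)$: the image is $\iota_*\Ocal_C(2H|_C-D)$ with $D\geq 0$ of degree $4$, and since $|2H|_C|$ has dimension $5$, imposing the four conditions from $D$ gives $h^0\geq 2$. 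For $Z_{1,3}\subset\BN^1(M)$: the image is $\iota_*\Ocal_C(p_1+p_2+p_3-p_4+H|_C)$, of degree $8$, and Riemann--Roch gives $\chi=-1$, so it suffices to show $h^1\geq 3$; Serre duality identifies this with $h^0(\Ocal_C(2H|_C-p_1-p_2-p_3+p_4))\geq h^0(\Ocal_C(2H|_C-p_1-p_2-p_3))\geq 3$ by the same counting argument.

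For the chain of inclusions, the strategy is to exhibit the smaller locus inside the larger at the level of the open parametrizations and then take closures via the isomorphisms. For $Z_4,Z_{1,3}\subset Z_8$: the preceding computation shows that the open dense subsets $\BN^0_4(B^\circ)$ and $Z_{1,3}^\circ$ have $h^0\geq 2$ after identification with $M$, so in particular they land in $\BN^0_8(B^\circ)$, and taking closures yields the inclusions. For $Z_2\subset Z_4$: given $\Lcal=\Ocal_C(D)\in\BN^0_2(B^\circ)$ with $\deg D=2$, its image in $M$ coincides with the image of $\Lcal':=\Ocal_C(H|_C-D)\in M(0,3,-5)$ under the second isomorphism, and $h^0(\Lcal')\geq h^0(\Ocal_C(H|_C))-2=1$, so $\Lcal'\in\BN^0_4(B^\circ)$. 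For $Z_2\subset Z_{1,3}$: for any $q\in C$ the identity $\Ocal_C(p_1+p_2)=\Ocal_C(p_1+p_2+q-q)$ exhibits an element of $\BN^0_2(B^\circ)$ as an element of $Z_{1,3}^\circ$.

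The most delicate step is the verification $Z_{1,3}\subset\BN^1(M)$, where the divisor $p_1+p_2+p_3-p_4+H|_C$ is not presented as an effective divisor plus a base-point-free system and one must pass through Serre duality before counting conditions; every other verification is routine once the two isomorphisms have been made explicit. Upper semicontinuity of $h^0$ on the relevant universal families then promotes all inclusions established on the opens to inclusions of the closures $Z_2$, $Z_4$, $Z_{1,3}$, $Z_8$.
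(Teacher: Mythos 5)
Your proposal is correct and follows essentially the same route as the paper: reduce to line bundles on a smooth cubic $C\in B^\circ$ via the two explicit isomorphisms, count conditions imposed on the linear systems $|H|_C|$ and $|2H|_C|$, and pass to closures by semicontinuity. The only cosmetic differences are that you use Serre duality where the paper invokes \cite[Lemma IV.5.5]{Har77} for $Z_{1,3}\subset \BN^1(M)$, and you establish $Z_2\subset Z_4$ via the Serre-dual bundle $\Ocal_C(H|_C-D)$ rather than via $h^1(C,\Lcal_2(2))\neq 0$; these are equivalent computations.
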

\begin{proof}
It suffices to prove the lemma over $C\in B^\circ$. Let $\Lcal_2\in \Pic^2(C)$ be a line bundle on $C$ with $H^0(C,\Lcal_2)\neq 0$. We have an injective morphism
\begin{align*}
     \Ocal_C(1)\hookrightarrow\Ocal_C(1)\otimes\Lcal_2.
\end{align*}
Then $h^0(C,\Ocal_C(1)\otimes \Lcal_2)\geq 3$ because $h^0(C,\Ocal_C(1))=h^0(S,\Ocal_S(1))=3$. Thus $Z_2\subset \BN^2(M)$.

Let $\Lcal_4\in\Pic^4(C)$ be a line bundle on $C$ with $H^0(C,\Lcal_4)\neq 0$. Let $\xi_4\subset C$ be the zero divisor of a section of $\Lcal_4$. We have a short exact sequence
\begin{align*}
    0\to \Ocal_S(-1)\to\Ical_{\xi_4}(2)\to \Lcal_4^\vee(2)\to 0.
\end{align*}
Then $h^0(C,\Lcal_4^\vee(2))\geq h^0(S,\Ical_{\xi_4}(2))\geq 2$. Thus $Z_4\subset \BN^1(M)$.
\ 

Let $\Lcal_{1,3}=\Ocal_C(p_1+p_2+p_3-p_4)\in \Pic^2(C)$ for $p_i\in C$. Then by \cite[Lemma IV.5.5]{Har77},
\begin{align*}
h^0(\Lcal_{1,3}\otimes \Ocal_C(1))\geq h^0(\Ocal_C(p_1+p_2+p_3))+h^0(\Ocal_C(1)\otimes\Ocal_C(-p_4))-1\geq 1+2-1=2.
\end{align*}
Thus $Z_{1,3}\subset \BN^1(M)$.

Next we compare the $Z_i$'s. To show that $Z_2\subset Z_4$ we need to show that for $\Lcal_2$ as above, $h^0(C,\Lcal_2^\vee(1))=h^1(C,\Lcal_2(2))\neq 0$. Note that $\chi(\Lcal_2(2))=5$. We can argue as in the first paragraph to show that $h^0(C,\Lcal_2(2))\geq h^0(C,\Ocal_C(2))=6$. Thus $h^1(C,\Lcal_2(2))\neq0$. To show that $Z_4\subset Z_8$ we need to show that with $\Lcal_4$ as above, $h^0(C,\Lcal_4^\vee(2))\geq 1$. This has been proved above. Finally, it is clear that $Z_2\subset Z_{1,3}\subset Z_8$.
\end{proof}

\begin{prop}\label{BNforM}
    \begin{enumerate}
        \item $Z_2$ is a $\Pbb^8$-bundle over $S^{[2]}$. In particular, $Z_2$ is smooth of dimension $12$.
        \item $Z_4$ is generically isomorphic to a $\Pbb^6$-bundle over $S^{[4]}$.
        \item $Z_{1,3}$ is generically isomorphic to a $\Pbb^6$-bundle over $S^{[1]}\times S^{[3]}$.
        \item $Z_8$ is generically isomorphic to a $\Pbb^2$-bundle over $S^{[8]}$.
    \end{enumerate}
\end{prop}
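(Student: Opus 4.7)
The unified strategy is: for each of $Z_2,Z_4,Z_{1,3},Z_8$, construct a natural projective bundle over a Hilbert scheme (or product of Hilbert schemes), equip it with a modular morphism to $M$, and identify its image with the claimed Brill--Noether locus either on the nose (part (1)) or generically (parts (2)--(4)).

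For part (1), let $\mathcal{Z}_2\subset S^{[2]}\times S$ be the universal length-$2$ subscheme. The sequence $0\to \mathcal{I}_{\mathcal{Z}_2}\otimes p_2^*\Ocal_S(3)\to p_2^*\Ocal_S(3)\to \Ocal_{\mathcal{Z}_2}(3)\to 0$, together with $H^1(S,\Ical_\xi(3))=0$ and the constant value $h^0=9$ from Lemma~\ref{linear systems}, shows that $\mathscr{E}_2:=p_{1*}(\mathcal{I}_{\mathcal{Z}_2}\otimes p_2^*\Ocal_S(3))$ is a rank-$9$ vector bundle on $S^{[2]}$. Set $Z:=\Pbb(\mathscr{E}_2)$: its closed points are pairs $(\xi,[C])$ with $\xi\subset C\in|3H|$, and $Z$ carries a universal relative Cartier divisor $\mathcal{D}\subset Z\times S$ together with the pulled-back subscheme $\widetilde{\mathcal{Z}}_2\subset \mathcal{D}$. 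Then $\mathcal{F}:=(\iota_\mathcal{D})_*\Ocal_\mathcal{D}(\widetilde{\mathcal{Z}}_2)$ is a $Z$-flat family of stable sheaves in $M(0,3,-7)$ (stability holds because $(0,3,-7)$ is primitive and each fiber is torsion-free of rank $1$ on its Fitting support), inducing a modular morphism $\varphi:Z\to M(0,3,-7)$ whose image lies in $Z_2$ via the canonical section $\Ocal_C\hookrightarrow \Ocal_C(\xi)$. The plan is to show $\varphi$ is an isomorphism onto $Z_2$ by verifying: (i) set-theoretic injectivity (recover $C$ as the Fitting support of $\iota_*\Ocal_C(\xi)$ and $\xi$ as the zero scheme of the unique nonzero section up to scalar); (ii) tangential injectivity via a standard deformation computation; (iii) surjectivity by combining properness of $\varphi$ with density of the open Brill--Noether stratum $\mathrm{BN}^0_2(B^\circ)$ in $Z_2$. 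Smoothness of $Z_2$ and $\dim Z_2=4+8=12$ are then immediate, and composition with the isomorphism $M(0,3,-7)\xrightarrow{\otimes\Ocal_S(1)}M$ places $Z_2$ inside $M$.

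For parts (2) and (4) the same recipe works after restricting to the open $U_i\subset S^{[i]}$ on which $\xi_i$ imposes $i$ independent conditions on $|3H|$: $\mathscr{E}_i:=p_{1*}(\mathcal{I}_{\mathcal{Z}_i}\otimes p_2^*\Ocal_S(3))$ has rank $11-i$ on $U_i$, and the modular morphism $\Pbb(\mathscr{E}_i|_{U_i})\to M(0,3,i-9)\to M$ sending $(\xi_i,[C])\mapsto \iota_*\Ocal_C(\xi_i)$ is birational onto $Z_i$ by the same Fitting-support-and-zero-locus recovery (with the appropriate isomorphism to $M$: the $\mathcal{E}xt^1(-,\Ocal_S(-1))$-duality for $i=4$, and the identity for $i=8$). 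For part (3) one works over the open in $S\times S^{[3]}$ where $p\cup \xi_3$ imposes $4$ independent conditions on $|3H|$, forms the $\Pbb^6$-bundle of cubics through $p\cup\xi_3$, and maps $(p,\xi_3,[C])\mapsto \iota_*\Ocal_C(\xi_3-p)\in M(0,3,-7)$. To invert generically, a typical $\mathcal{L}(1)\in Z_{1,3}\subset M$ has Fitting support $C$; by the Mattuck--Teissier computation already used in the preceding lemma, every section of $\mathcal{L}(1)=\Ocal_C(\xi_3+L-p)$ vanishes along $\xi_3$, so $\xi_3$ is the base divisor of $|H^0(\mathcal{L}(1))|$, after which $p$ is the unique base point of the residual pencil $|H^0(\mathcal{L}(1))(-\xi_3)|\cong|\Ocal_C(L-p)|$.

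The main obstacle lies in part (1), where an honest (not merely birational) isomorphism is claimed. This forces a uniform analysis of $\iota_*\Ocal_C(\xi)$ over the non-integral strata $\Sigma_3\subset\Sigma_{2,1}\cup\Sigma_3\subset\Sigma_{1,1,1}\cup\Sigma_{2,1}\cup\Sigma_3\subset\Sigma$ of Subsection~\ref{3H}: one must verify stability and the absence of extra global sections even when $C$ is reducible or non-reduced (and $\xi$ may meet components or singularities), and conversely that every semistable $\mathcal{E}\in Z_2$ supported on a non-integral cubic admits a unique effective degree-$2$ representation. For parts (2)--(4) this boundary analysis is sidestepped because only generic isomorphism is claimed; the remaining subtlety is irreducibility of each $Z_k$, which follows from irreducibility of the domain of the birational map constructed above.
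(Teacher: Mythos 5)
Your construction is the same skeleton as the paper's: the incidence variety of pairs $(\xi_i,C)$ with $\xi_i\subset C\in|3H|$, realized as $\Pbb\bigl(p_{*}(\Isc_{\Zcal_i}\otimes\Ocal_S(3))\bigr)$ over (an open subset of) $S^{[i]}$, mapped modularly into $M(0,3,i-9)$ and then into $M$; the paper's map $(\xi_i\subset C)\mapsto\RExt^1(\Ical_{\xi_i/C},\Ocal_S(-3))$ is just the dual formulation of your $(\xi_i,C)\mapsto\iota_*\Ocal_C(\xi_i)$. The rank computation for $\mathscr{E}_2$ and the "closed image plus density of $\BN^0_2(B^\circ)$" surjectivity argument also match.

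The genuine gap is in the injectivity steps, which you either assert or defer but never actually supply. Recovering $\xi_2$ as "the zero scheme of the unique nonzero section up to scalar" presupposes $h^0(\Ocal_C(\xi_2))=1$ for \emph{every} cubic $C$ (including non-reduced and reducible ones) and every $\xi_2\subset C$; likewise your part (3) recovery of $(p,\xi_3)$ from base loci presupposes $h^0(\Lcal(1))=2$ exactly and that the base divisor is precisely $\xi_3$, and part (4) needs $h^0\leq 2$. These are nontrivial Brill--Noether-type bounds: in the paper they are exactly Corollary~\ref{BNinj}, whose proof is \emph{not} elementary — it reads off the bounds from the decompositions of the Mukai vector at the ninth, eighth, seventh, and sixth walls in the proof of Theorem~\ref{main}. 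You flag the non-integral strata as "the main obstacle ... to verify," but verifying them is the mathematical content of the injectivity claim, and no method is proposed (for smooth $C$ one could argue via Castelnuovo--Severi that a bielliptic genus-$10$ curve is not hyperelliptic, but this does not extend to $\Sigma$). A second, smaller error: stability of $\iota_*\Ocal_C(\xi)$ does not "hold because $(0,3,-7)$ is primitive and each fiber is torsion-free of rank $1$ on its Fitting support" — a rank-one torsion-free sheaf on a non-integral curve can be destabilized by a quotient supported on a component, and the paper checks stability of $\Ical_{\xi_2/C}$ by hand using the component filtrations of $\Ocal_C$ from Section~\ref{3H}. Primitivity of the Mukai vector only guarantees that semistability implies stability for a generic polarization; it says nothing about a particular sheaf being semistable.
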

\begin{proof}
    First some preliminaries. For $i\geq 1$ let $\mathcal{Z}_i\subset S\times S^{[i]}$ be the universal subscheme and let $\Isc_{\Zcal_i}$ be the ideal sheaf of $\Zcal_i$. Let $p_1$ and $p_2$ be the projections from $S\times S^{[i]}$ to its two components. We have an inclusion 
    \begin{align*}
        p_{2*}(\Isc_{\Zcal_i}\otimes p_1^*\Ocal_S(3))\hookrightarrow p_{2*}(\Ocal_{S\times S^{[i]}}\otimes p_1^*\Ocal_S(3))\cong H^0(S,\Ocal_S(3))\otimes\Ocal_{S^{[i]}}.
    \end{align*}
    This defines a subscheme $\Xcal_i\subset B\times S^{[i]}$, which can be thought of as $\mathrm{Hilb}^i(\mathcal{C}/B)$. We note that $\Xcal_i$ is irreducible. Similarly, one can construct $\Xcal_{1,3}\subset B\times S\times S^{[3]}$, parametrizing triples $(C,p,\xi_3)$ where $C\in B$, $p\in C$, and $\xi_3\subset C$.
    
    We claim that $\Xcal_2$ is a $\Pbb^8$-bundle over $S^{[2]}$. We need to show that for any $\xi_2\in S^{[2]}$, $h^0(\Ical_{\xi_2}(3))=9$. We have a long exact sequence
    \begin{align*}
        0\to H^0(\Ical_{\xi_2}(3))\to H^0(\Ocal_S(3))\xrightarrow{r} H^0(\Ocal_{\xi_2})\to H^1(\Ical_{\xi_2}(3))\to0.
    \end{align*}
    Since $h^0(\Ocal_S(3))=11$ and $h^0(\Ocal_{\xi_2})=2$, it remains to show that the map $r$ is surjective. Recall that $H^0(\Ocal_S(3))$ has the pull-backs of  cubics $H^0(\Pbb^2,\Ocal_{\Pbb^2}(3))$ as a $10$-dimensional subspace, along with an extra dimension generated by the reduced ramification locus $R$. If $\xi_2=p\sqcup q$ with $\pi(p)\neq \pi(q)$, then $r$ is surjective by considering the cubics. If $\xi_2=p\sqcup q$ with $p\neq q$ but $\pi(p)=\pi(q)$, then $r$ is surjective by considering any cubic not passing through $\pi(p)$ and the section corresponding to $R$. If $\xi_2$ is supported at a point $p\notin R$, then $r$ is surjective by considering pull-backs of a cubic not through $\pi(p)$ and a cubic through $\pi(p)$ but not in the direction given by $\xi_2$ on $\Pbb^2$. If $\xi_2$ is supported at a point $p\in R$, then $\xi_2$ is surjective by considering the pull-backs of cubics as above and the section corresponding to $R$.
    
    For $i\geq3$ the restriction map $H^0(\Ocal_S(3))\to H^0(\Ocal_{\xi_i})$ will be surjective only for general $\xi_i\in S^{[i]}$. Thus $\Xcal_4$ is generically a $\Pbb^6$-bundle over $S^{[4]}$, $\Xcal_{1,3}$ is generically a $\Pbb^6$-bundle over $S\times S^{[3]}$, and $\Xcal_8$ is generically a $\Pbb^2$-bundle over $S^{[8]}$. Note that we have the short exact sequence on $S\times \Xcal_i$ (we have an embedding $S\times\Xcal_i\to S\times B\times S^{[i]}$)
    \begin{align*}
        0\to p^*_{12}\Ocal_{S\times B}(-\mathcal{C})\to p^*_{13}\Isc_{\Zcal_i}\to\mathcal{Q}_i \to 0.
    \end{align*}
    Since $\mathcal{Q}_i$ is flat over $\Xcal_i$ and $\vbf(\mathcal{Q}_i|_{S\times\{x\}})=(0,3,-i-9)$ for $x\in \Xcal_i$, we obtain  rational maps
    \begin{eqnarray*}
        f_i:\Xcal_i & \dashrightarrow & M(0,3,-i-9), \\
        (\xi_i\subset C) & \mapsto & \Ical_{\xi/C}.
    \end{eqnarray*}
    We define $F_i:\Xcal_i\dashrightarrow M(0,3,i-9)$ as the composition of $f_i$ with the dual (iso)morphism $$R\mathcal{H}om(-,\Ocal_S(-3))[1]:M(0,3,-i-9)\to M(0,3,i-9).$$
    Now we can prove the four statements of the proposition. We prove (1), (2), and (4) by showing that $F_i$ induces a birational map between $\Xcal_i$ and $Z_i$.
    
\vspace*{2mm}
\noindent(1) We claim that $F_2$ maps $\Xcal_2$ isomorphically onto $Z_2$. First note that $\Ical_{\xi_i/C}$ can be unstable only when $C$ is not integral. One easily sees that $\Ical_{\xi_2/C}$ is always stable by Section~\ref{3H}. To see that $F_2$ is injective, note that $C$ can be recovered as the support of $\Ical_{\xi_2/C}$. We can show that $\RExt^1(\Ical_{\xi_2/C},\Ocal_S(-3))$ has a section by applying the dual functor $R\mathcal{H}om(-,\Ocal_S(-3))[1]$ to \begin{align*}
        0\to \Ocal_S(-3)\to\Ical_{\xi_2}\to \Ical_{\xi_2/C}\to 0.
   \end{align*}
  Corollary \ref{BNinj}(1) (that we will prove in the next section) then shows that $\RExt^1(\Ical_{\xi_2/C},\Ocal_S(-3))$ has a unique section, up to a scalar. This section gives rise to the long exact sequence
\begin{align*}
     0\to \Ocal_S(-3)\to\Ocal_S\to \RExt^1(\Ical_{\xi_2/C},\Ocal_S(-3))\to \Ocal_{\xi_2}\to 0,
\end{align*}
recovering $\xi_2$.
   
To see that $F_2$ induces an injection on the tangent space, we refer the readers to the proof of \cite[Proposition 4.6]{Hel20}. Note that $\Xcal_2$ has an open dense subset $\Xcal_2^\circ$ consisting of pairs $(C,\xi_2)$ where $C$ is smooth. It is clear that $F_2(\Xcal_2^\circ)=\BN_2^0(B^\circ)$. Since $F_2$ is a closed morphism, $F_2(\Xcal_2)=Z_2$. In turn, $Z_2$ is a $\Pbb^8$-bundle over $S^{[2]}$.
    
\vspace*{2mm}
\noindent(2) Let $\Xcal_4^\circ$ be the open subset of $\Xcal_4$ parametrizing pairs $(C,\xi_4)$ where $H^0(\Ical_{\xi_4}(1))=0$. By Section~\ref{3H}, we see that $f_4$ is defined on $\Xcal_4^\circ$. Let $\Xcal_4^{\circ\circ}$ be the open subset of $\Xcal_4^\circ$ where we also require that $C$ is smooth. Then $F_4(\Xcal_4^{\circ\circ})\subset \BN^0_4(B^\circ)$. Since the closure of $\Xcal_4^{\circ\circ}$ in $\Xcal_4^\circ$ is $\Xcal_4^\circ$ itself, we have $F_4(\Xcal_4^\circ)\subset Z_4$. Recall that $Z_2\subset Z_4\subset M(0,3,-5)$. We claim that $F_4(\Xcal_4^\circ)\cap Z_2=\emptyset$. It is easier to prove the claim with $f_4$. The image of $Z_2$ in $M(0,3,-13)$ (which is isomorphic to $M(0,3,-5)$ by $R\mathcal{H}om(-,\Ocal_S(-3))[1]$) is of the form $\Ocal_S(-1)\otimes \RExt^1(\Ical_{\xi_2/C},\Ocal_S(-3))$. Suppose that
\begin{align*}
f_4((C,\xi_4))=\Ical_{\xi_4/C}=\Ocal_S(-1)\otimes \RExt^1(\Ical_{\xi_2/C},\Ocal_S(-3))
\end{align*}
for some $(C,\xi_4)\in \Xcal_4^\circ$ and $\xi_2\subset C$. Applying the dual functor $R\mathcal{H}om(-,\Ocal_S(-3))[1]$ to $0\to \Ical_{\xi_2/C}\to\Ocal_C\to \Ocal_{\xi_2}\to 0$, we obtain 
    \begin{align*}
        0\to \RExt^1(\Ocal_C,\Ocal_S(-3))\to \RExt^1(\Ical_{\xi_2/C},\Ocal_S(-3))\to \RExt^2(\Ocal_{\xi_2},\Ocal_S(-3))\to 0.
    \end{align*}
    Noting that $\RExt^1(\Ocal_C,\Ocal_S(-3))=\Ocal_C$, we obtain an injection $\Ocal_C(-1)\hookrightarrow \Ical_{\xi_4/C}$, contradicting our assumption that $H^0(\Ical_{\xi_4}(1))=0$. Hence $F_4(\Xcal_4^\circ)\subset Z_4\backslash Z_2$. It is easy to see now that $F_4(\Xcal_4^{\circ\circ})=\BN^0_4(B^\circ)\backslash Z_2$. The injectivity of $F_4|_{\Xcal_4^\circ}$ follows from Corollary \ref{BNinj}(2) by arguing as in part (1). Like before, $F_4$ induces an injection on the tangent space. As a result, $Z_4\backslash Z_2$ is generically a $\Pbb^6$-bundle over $S^{[4]}$.
    
\vspace*{2mm}
\noindent(3) Noting the embedding $S\times \Xcal_{1,3}\hookrightarrow S\times B\times S\times S^{[3]}$, we have on $S\times \Xcal_{1,3}$ the short exact sequence
    \begin{align*}
        0\to p^*_{12}\Ocal_{S\times B}(-\mathcal{C})\to p^*_{14}\Ical_{Z_3}\to \mathcal{Q}_3\to 0,
    \end{align*}
     where $\Qcal_3$ is flat and parametrizes $\Ical_{\xi_3/C}$ over $\Xcal_{1,3}$ with $\vbf(\Qcal_3|_{S\times\{x\}})=(0,3,-12)$ for $x\in \Xcal_{1,3}$. Let $\Xcal_{1,3}^\circ$ be the open subset of $\Xcal_{1,3}$ parametrizing triples $(C,p,\xi_3)$ where $H^0(\Ical_{\xi_3}(1))=0$ and $p\notin \xi_3$. For any $(C,p,\xi_3)\in \Xcal^\circ_{1,3}$, there is a unique nontrivial extension of $\Ocal_p$ by $\Ical_{\xi_3/C}$. Let $\Pcal$ denote the universal extension
     \begin{align*}
         0\to \Qcal_3|_{S\times \Xcal_{1,3}^\circ}\to \Pcal\to (p^*_{13}\Ocal_\Delta)|_{S\times \Xcal_{1,3}^\circ}\to 0,
     \end{align*}
     where $\Delta$ denotes the diagonal in $S\times S$. Then $\Pcal$ is flat over $\Xcal^\circ_{1,3}$ and gives a rational map
     \begin{align*}
         f_{1,3}:\Xcal^\circ_{1,3}\dashrightarrow M(0,3,-11).
     \end{align*}
     By Section~\ref{3H}, $f_{1,3}$ is in fact a morphism. Let $F_{1,3}$ be the composition of $f_{1,3}$ with the isomorphism 
     \begin{align*}
         \RHom(-,\Ocal_S(-3))[1]:M(0,3,-11)\to M(0,3,-7).
     \end{align*}
      Let $\Xcal_{1,3}^{\circ\circ}$ be the open subset of $\Xcal_{1,3}^\circ$ where we also require that $C$ is smooth. We have $F_{1,3}(\Xcal_{1,3}^{\circ\circ})\subset Z_{1,3}^\circ$, and hence $F_{1,3}(\Xcal_{1,3}^\circ)\subset Z_{1,3}$.
     
 Let $\Lcal=f_{1,3}((C,p,\xi_3))$ for some $(C,p,\xi_3)\in \Xcal^\circ_{1,3}$. Then we have a distinguished triangle
 \begin{align*}
     \Ical_{\xi_3}\to \Lcal\to \RHom(\Ical_p,\Ocal_S(-3))[1]\to\Ical_{\xi_3}[1].
 \end{align*}
 Applying the dual functor $R\mathcal{H}om(-,\Ocal_S(-2))[1]$ we obtain the distinguished triangle
 \begin{align*}
     \Ical_p(1)\to \RExt^1(\Lcal,\Ocal_S(-2))\to \RHom(\Ical_{\xi_3},\Ocal_S(-2))[1]\to \Ical_p(1)[1].
 \end{align*}
 Since $\Lcal\in M(0,3,-11)$, we have $\RExt^1(\Lcal,\Ocal_S(-2))\in M$. By the proof of Theorem \ref{main} for the seventh wall, $\RExt^1(\Lcal,\Ocal_S(-2))$ is in the exceptional locus in $X_8$ for $g_7$. In particular,
 \begin{align*}
 \RExt^1(\Lcal,\Ocal_S(-2))\in M\cap X_8=M\backslash Z_4.
 \end{align*}
 Thus $F_{1,3}(\Xcal_{1,3}^\circ)\subset Z_{1,3}\backslash Z_4$. In fact, it is easy to see that $F_{1,3}(\Xcal_{1,3}^{\circ\circ})=Z_{1,3}^\circ\backslash Z_4$.
 
 To see that $F_{1,3}|_{\Xcal_{1,3}^\circ}$ is injective, with $\Lcal$ as above we obtain a distinguished triangle
 \begin{align*}
     \Ical_p\to \RExt^1(\Lcal,\Ocal_S(-3))\to \RHom(\Ical_{\xi_3},\Ocal_S(-3))[1]\to\Ical_p[1]
 \end{align*}
 Applying Corollary \ref{BNinj}(3) to $\RExt^1(\Lcal,\Ocal_S(-3))$, we recover $p$ and $\xi_3$. As before, $F_{1,3}$ induces an injection on the tangent space. As a result, $Z_{1,3}\backslash Z_4$ is generically a $\Pbb^6$-bundle over $S\times S^{[3]}$.
 
\vspace*{2mm}
\noindent(4) Let $\Xcal_8^\circ$ be the open subset of $\Xcal_8$ parametrizing pairs $(C,\xi_8)$ where $H^0(\Ical_{\xi_8}(2))=0$ and $H^0(\Ical_{\xi_l}(1))=0$ for any closed subscheme $\xi_l\subset\xi_8$ of length $5\leq l\leq 8$. By Section~\ref{3H}, we see that $f_8$ is defined on $\Xcal_8^\circ$. Let $\Xcal_8^{\circ\circ}$ be the open subset of $\Xcal_8^\circ$ where we also require that $C$ is smooth. Then $F_8(\Xcal_8^{\circ\circ})\subset\BN^0_8(B^\circ)$. Since the closure of $\Xcal_8^{\circ\circ}$ in $\Xcal_8^\circ$ is $\Xcal_8^\circ$ itself, $F_8(\Xcal_8^\circ)\subset Z_8$. 
 
Suppose that $(C,\xi_8)\in \Xcal_8^\circ$. Then $F_8((C,\xi_8))=\RExt^1(\Ical_{\xi_8/C},\Ocal_S(-3))\in M$. By our assumption on $\xi_8$ and the proof of Theorem \ref{main} for the sixth wall, any nontrivial extension $\Fcal$ of the form
 \begin{align*}
     \Ocal_S\to \Fcal\to \RHom(\Ical_{\xi_8},\Ocal_S(-3))[1]
 \end{align*}
 is in the exceptional locus in $X_7$ for $g_6$. Applying the dual functor $\RHom(-,\Ocal_S(-3))$, we obtain
 \begin{align*}
     \Ocal_S(-3)\to \Ical_{\xi_8}\to \RHom(\Fcal,\Ocal_S(-3))[1].
 \end{align*}
 It is easy to see that $\Fcal$ is determined by a nonzero map from $\Ocal_S(-3)$ to $\Ical_{\xi_8}$, thus we have $\RHom(\Fcal,\Ocal_S(-3))[1]\cong\Ical_{\xi_8/C}$ for some $\xi_8\subset C$ and $\Fcal\cong \RExt^1(\Ical_{\xi_8/C},\Ocal_S(-3))$. Since this is true for any such extension, we see that $F_8((C,\xi_8))$ is in the exceptional locus in $X_7$ for $g_6$. In particular,
 \begin{align*}
 \RExt^1(\Ical_{\xi_8/C},\Ocal_S(-3))\in M\cap X_7=M\backslash(Z_4\cup Z_{1,3}).
 \end{align*}
 Hence $F_8(\Xcal_8^\circ)\subset Z_8\backslash(Z_4\cup Z_{1,3})$. In fact, it is easy to see that $F_8(\Xcal_8^{\circ\circ})= \BN^0_4(B^\circ)\backslash(Z_4\cup Z_{1,3})$. By Corollary \ref{BNinj}(4), $F_8$ is injective on an open dense subset of $\Xcal_8^\circ$. As before, $F_8|_{\Xcal_8^\circ}$ induces an injection on the tangent space. As a result, $Z_8\backslash(Z_4\cup Z_{1,3})$ is generically a $\Pbb^2$-bundle over $S^{[8]}$.
\end{proof}
\begin{rem}
The dimension of $Z_2$ is $12$, both $Z_4$ and $Z_{1,3}$ are $14$-dimensional, and the dimension of $Z_8$ is $18$.
\end{rem}

\section{Wall-crossing for $\hilb$ and $M$}\label{wallcross}
By our assumption, $\mathrm{NS}(S)\cong\Z[H]$. For $x\in \R$ and $y\in \R_{>0}$, we use $\sigma_{x,y}$ to denote $\sigma_{xH,yH}$ as defined in Section~\ref{pre-stab}. By \cite[Lemma 6.2]{Bri08}, we obtain a set of (geometric) stability conditions parametrized by an open half plane.
\begin{lem}
For any $x\in\R$ and $y>1$, $\sigma_{x,y}$ is a stability condition on $S$.
\end{lem}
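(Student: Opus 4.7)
The plan is to invoke \cite[Lemma 6.2]{Bri08} directly, exactly as it was cited in Section~\ref{pre-stab}. That lemma asserts that the pair $\sigma_{\beta,\omega} = (\mathcal{A}_{\beta,\omega}, Z_{\beta,\omega})$ defines a Bridgeland stability condition on $S$ provided every spherical sheaf $G$ on $S$ satisfies $Z_{\beta,\omega}(G) \notin \R_{\leq 0}$, and moreover this hypothesis is automatically satisfied whenever $\omega^2 > 2$.

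I would specialize to $\beta = xH$ and $\omega = yH$ with $y > 0$, and then use the fact that our K3 surface is of genus two, so $H^2 = 2$. This gives
\[
\omega^2 = y^2 H^2 = 2y^2,
\]
so the inequality $\omega^2 > 2$ reduces exactly to $y > 1$. Under this hypothesis Bridgeland's sufficient condition is satisfied and the conclusion follows for all $x \in \R$.

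There is no real obstacle: the statement is a one-line consequence of the normalization $H^2 = 2$ matching the threshold $\omega^2 > 2$ in the cited criterion. The only check worth flagging is that the resulting region $\{(x,y) \in \R \times \R_{>0} : y>1\}$ is indeed an open half plane, which matches the parametrization referred to in the sentence immediately preceding the lemma; this is transparent.
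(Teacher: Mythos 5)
Your proof is correct and is exactly the argument the paper intends: the lemma is an immediate application of the sufficient condition $\omega^2>2$ from \cite[Lemma 6.2]{Bri08}, which with $\omega=yH$ and $H^2=2$ becomes $2y^2>2$, i.e.\ $y>1$. The paper's own Remark~\ref{RestrictionStab} confirms this is the criterion being used (and notes it is sufficient but not necessary), so there is nothing to add.
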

\begin{rem}\label{RestrictionStab}
The restriction $y>1$ is sufficient but not necessary for $\sigma_{x,y}$ to be a stability condition. A more precise requirement is that $Z_{xH,yH}(E)\notin\R_{\leq0}$ for any spherical sheaf $E$ (\cite[Lemma 6.2]{Bri08}). If $\vbf(E)=(r,c_1,s)$, then $Z_{xH,yH}(E)\notin\R_{\leq0}$ amounts to $y>1/r$ when $x=c_1/r$. As a result, we see that for $0<y\leq 1$, $\sigma_{0,y}$ is not a stability condition by considering the spherical object $\Ocal_S[1]$; while for $x\notin\mathbb{Q}$, $\sigma_{x,y}$ is a stability condition for all $y>0$.
\end{rem}
General walls in the plane are (nested) semicircles. For the Hilbert scheme $S^{[n]}=M(1,0,1-n)$ with $n\geq2$, we consider the Mukai vector $\vbf=(1,0,1-n)$. For any $\sigma$ of the form $\sigma_{x,y}$, generic with respect to $\vbf$, $M_\sigma(\vbf)$ is a projective hyperk\"ahler manifold of dimension $2n$. We have the Mukai morphism $\theta_\vbf:\vbf^\perp\xrightarrow{\sim}\mathrm{NS}(M_\sigma(\vbf))\cong \Z^2$. We will omit the subscript from $\theta_\vbf$ when there is no confusion. Bayer and Macr{\`i}~\cite[Section 5]{BM14b} showed that to each wall one can associate a rank two hyperbolic sublattice of $\Halg(S,\Z)$ containing $\vbf$. The following numerical criterion will help us find and characterize the walls for $\vbf$.
\begin{thm}\cite[Theorems 12.1, 12.3]{BM14b}\cite[Remark 2.8]{Cat19}\label{wallcomp}
(1) Divisorial walls are walls whose lattices contain some $\abf\in\Halg(S,\Z)$ such that 
\begin{itemize}
    \item $\abf^2=-2$ and $(\vbf,\abf)=0$, or 
    \item $\abf^2=0$ and $(\vbf,\abf)=1$ or $2$.
\end{itemize}
Together the linear subspaces $\theta(\vbf^\perp\cap \abf^\perp)$ cut out the movable cone $\mathrm{Mov}(S^{[n]})$ in the rational (closed) positive cone $\overline{\mathrm{Pos}}(S^{[n]})_\mathbb{Q}$.\ 

(2) Flopping walls are walls whose lattices contain some $\abf\in\Halg(S,\Z)$ such that 
\begin{itemize}
    \item $\abf^2=-2$ and $1\leq (\vbf,\abf)\leq n-1$, or 
    \item $\abf^2=0$ and $3\leq (\vbf,\abf)\leq n-1$, or
    \item $2\leq \abf^2< \frac{n-1}{2}$ and $2\abf^2+1\leq (\vbf,\abf)\leq n-1$.
\end{itemize}
\end{thm}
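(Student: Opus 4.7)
The plan is to translate the classification into a lattice-theoretic problem via Jordan-H\"older analysis. For a generic $\sigma_0$ on the wall $W$, any strictly $\sigma_0$-semistable object of Mukai vector $\vbf$ decomposes into stable factors whose Mukai vectors $\abf_1,\ldots,\abf_k$ all lie on the common ray $\R_{>0}\cdot Z_{\sigma_0}(\vbf)$. The condition of collinearity with $\vbf$ under $Z_{\sigma_0}$, as $\sigma_0$ varies along $W$, cuts out exactly a primitive rank two sublattice $\Hcal_W \subset \Halg(S,\Z)$ containing $\vbf$; hyperbolicity of $\Hcal_W$ is forced by $\vbf^2 > 0$ together with a negative or isotropic direction transverse to $\vbf$. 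The classification of $W$ then reduces to enumerating the possible JH decompositions $\vbf = \sum n_i \abf_i$ inside $\Hcal_W$ and computing the codimension of the corresponding stratum of strictly semistable objects in $M_\sigma(\vbf)$.

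First I would analyze two-term decompositions $\vbf = \abf + \bbf$ in $\Hcal_W$, which produce the generic strictly semistable objects. Setting $\bbf = \vbf - \abf$ and applying Riemann-Roch on the K3 surface, a pair of stable factors $(E_\abf, E_\bbf)$ together with a nontrivial extension yields a stratum of dimension
\begin{align*}
\dim M_\sigma(\abf) + \dim M_\sigma(\bbf) + \dim \Pbb(\mathrm{Ext}^1(E_\bbf, E_\abf)) = \abf^2 + \bbf^2 + (\abf,\bbf) + 3,
\end{align*}
and the identity $\vbf^2 = \abf^2 + \bbf^2 + 2(\abf,\bbf)$ then reduces the codimension in $M_\sigma(\vbf)$ to $(\abf,\bbf) - 1 = (\vbf,\abf) - \abf^2 - 1$. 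The spherical case $\abf^2 = -2$ requires a small modification, since $M_\sigma(\abf)$ is a single point rather than a surface, giving codimension $(\vbf,\abf) + 1$. For longer JH filtrations (most relevantly $\vbf = n\abf + \bbf$ with $\abf^2 = 0$, which governs the Hilbert-Chow and Li-Gieseker-Uhlenbeck walls) the stratum has a similar description with a symmetric product replacing the projective space of extensions, and the codimension is computed analogously.

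The numerical conditions in (1) and (2) now fall out from the requirement that the stratum have codimension one (divisorial) or at least two (flopping). The case $\abf^2 = -2, (\vbf,\abf) = 0$ in (1) yields a $\Pbb^1$-bundle Brill-Noether divisor contracted to a K3 moduli base; the isotropic cases $\abf^2 = 0, (\vbf,\abf) \in \{1,2\}$ give the Hilbert-Chow contraction (through longer JH filtrations) and the Li-Gieseker-Uhlenbeck contraction respectively. For (2), the lower bounds on $(\vbf,\abf)$ come from forcing codimension at least two, while the upper bounds $(\vbf,\abf) \leq n-1$ and $\abf^2 < (n-1)/2$ come from requiring $\bbf^2 \geq -2$ and $\bbf^2 \geq 0$ so that $M_\sigma(\bbf)$ is nonempty of the appropriate dimension. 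Part (1) then yields the movable cone description: via the Mukai isomorphism $\theta_\vbf : \vbf^\perp \xrightarrow{\sim} \NS(M_\sigma(\vbf))$, the hyperplanes $\theta(\vbf^\perp \cap \abf^\perp)$ attached to divisorial classes $\abf$ are precisely the walls of $\Mov(S^{[n]})$ inside the positive cone.

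The main obstacle is the converse direction: verifying that every numerical class $\abf \in \Halg(S,\Z)$ satisfying one of the listed arithmetic conditions actually produces a geometric wall, rather than a purely algebraic relation with no associated destabilizing object. This requires constructing $\sigma_0$-semistable objects realizing each JH type, for which the inputs are Yoshioka's non-emptiness theorems for Mukai moduli on K3 surfaces together with a check that the predicted $\sigma_0$ lies in the connected component $\Stab^\dagger(S)$. One also needs the positivity lemma and projectivity theorem of Bayer-Macr\`i, which produce an ample class on $M_\sigma(\vbf)$ orthogonal under $\theta_\vbf$ to a distinguished class in $\Hcal_W$; this is what forces the hyperplanes in part (1) to cut out the movable cone exactly, rather than merely containing it.
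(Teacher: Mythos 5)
First, a structural remark: the paper does not prove this statement at all --- it is quoted from Bayer--Macr\`{i} \cite[Theorems 12.1, 12.3]{BM14b} as specialized to Picard rank one in \cite[Remark 2.8]{Cat19} --- so there is no internal proof to compare against. Your sketch follows the broad strategy of the cited proof (rank-two hyperbolic lattices attached to walls, Jordan--H\"older strata, codimension counts, Yoshioka nonemptiness for the converse, the positivity lemma for the movable-cone statement), and most of the individual computations you record are correct, including $\mathrm{codim}=(\vbf,\abf)-\abf^2-1$ for a two-term decomposition. (Note that your ``small modification'' in the spherical case is unnecessary: $\dim M_\sigma(\abf)=\abf^2+2=0$ already, so the general formula gives $(\vbf,\abf)+1$ directly.)

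There is, however, a genuine gap in your derivation of the third bullet of (2). Requiring ``codimension at least two'' in your formula yields $(\vbf,\abf)\geq \abf^2+3$, whereas the theorem asserts $(\vbf,\abf)\geq 2\abf^2+1$; these agree only for $\abf^2=2$. The discrepancy is not cosmetic. For $n=10$ and $\abf^2=4$ (the case relevant to this paper, since $\abf^2$ is even and $2\leq\abf^2<\tfrac{9}{2}$), your criterion would admit $(\vbf,\abf)=7$ or $8$; but taking $(\vbf,\abf)=8$ gives $\bbf=\vbf-\abf$ with $\bbf^2=6$ and $(\abf,\bbf)=4$, so the Gram matrix of $\langle\abf,\bbf\rangle$ has determinant $+8$ and the lattice is positive definite --- there is no hyperbolic rank-two lattice and hence no wall at all. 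The lower bound $2\abf^2+1$ does not come from a codimension count; it encodes the hyperbolicity of $\langle\abf,\vbf\rangle$ together with the normalization of which generator of the rank-two lattice one calls $\abf$ (one may always replace $\abf$ by $\vbf-\abf$, which is also the source of the symmetric upper bound $n-1$). You assert hyperbolicity of $\Hcal_W$ in your opening paragraph but never feed it back into the enumeration of numerical classes, and without that step the list of flopping walls your argument produces is strictly larger than the correct one.
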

To analyze the wall-crossing for $\hilb$, we first compute its movable cone.
\begin{lem}\cite[Proposition 13.1]{BM14b} Let $\tilde{H}=\theta(0,-1,0)$ and $B=\theta(-1,0,-9)$. Then
$$\mathrm{Mov}(\hilb)=\left\langle \tilde{H},\tilde{H}-\frac{1}{3}B\right\rangle.$$
\end{lem}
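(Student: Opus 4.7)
The plan is to apply Theorem~\ref{wallcomp}(1) (the general case of which is \cite[Proposition~13.1]{BM14b}) to enumerate the divisorial walls for $\vbf=(1,0,-9)$, show that there is essentially only one, and then identify $\mathrm{Mov}(\hilb)$ as the region cut out in the positive cone by that wall together with the appropriate isotropic boundary ray of the positive cone.

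Parametrize a candidate class as $\abf=(r,cH,s)\in\Halg(S,\Z)$, so that $\abf^2=2c^2-2rs$ and $(\vbf,\abf)=9r-s$. The case $(\abf^2,(\vbf,\abf))=(-2,0)$ becomes $9r^2-c^2=1$, i.e.\ $(3r-c)(3r+c)=1$, which admits no integer solutions. The case $(0,1)$ becomes $c^2=r(9r-1)$; since $\gcd(r,9r-1)=1$, the two factors must both be $\pm$squares, which after a short manipulation reduces to the same Pell-type identity and forces $r=c=0$, giving $\abf=(0,0,-1)$. The case $(0,2)$ becomes $c^2=r(9r-2)$; checking parity shows $r$ must be even, and writing $r=2r'$ reduces to the previous case, yielding only the non-primitive $\abf=(0,0,-2)$, which defines the same wall as $(0,0,-1)$.

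For $\abf=(0,0,-1)$, an element of $\vbf^\perp\cap\abf^\perp$ is a triple $(r',c'H,s')$ with $9r'-s'=0$ and $r'=0$; hence $r'=s'=0$, and the intersection is the rank-one sublattice $\Z\cdot(0,-1,0)$, mapping under $\theta$ to $\R\cdot\tilde{H}$. This is the unique divisorial wall for $\vbf$. Since $\tilde{H}^2=2>0$, it lies in the interior of the positive cone and bounds $\mathrm{Mov}(\hilb)$ on one side. The opposite boundary must be an isotropic ray of the positive cone: solving $(a\tilde{H}+bB)^2=2a^2-18b^2=0$ gives the two candidates $\tilde{H}\pm\tfrac{1}{3}B$.

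To select the correct isotropic ray, use that $\hilb$ is birational to the Beauville-Mukai system $f:M\to|3H|$, so $\mathrm{Mov}(\hilb)=\mathrm{Mov}(M)$ after identifying Néron-Severi lattices. The pullback $f^*\Ocal_B(1)$ is isotropic (because $f$ is Lagrangian) and lies in the closure of $\mathrm{Mov}(M)$. Tracing this class through the Mukai morphism, together with the autoequivalence $\Phi$-induced identification of $\vbf^\perp$ with $(0,3,-1)^\perp$, shows it is a positive multiple of $\tilde{H}-\tfrac{1}{3}B$. Combining, $\mathrm{Mov}(\hilb)=\langle\tilde{H},\tilde{H}-\tfrac{1}{3}B\rangle$. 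The main obstacle is not the Diophantine step, which is elementary in this range, but the orientation step: pinning down which isotropic ray bounds the movable cone, which one settles either via the Lagrangian fibration as above or by verifying that a Gieseker-ample class on $\hilb$ lies on the chosen side of $\tilde{H}$. Everything else is straightforward lattice bookkeeping.
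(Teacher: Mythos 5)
The paper offers no proof of this lemma---it is quoted verbatim from [BM14b, Proposition~13.1]---and your derivation from Theorem~\ref{wallcomp}(1) is a correct specialization of exactly the argument behind that citation: the Diophantine analysis rightly shows that $(0,0,-1)$ is the only primitive class cutting a divisorial wall (the $(-2,0)$ and $(0,1)$ cases reduce to $(3r\mp c)(3r\pm c)=1$, and the $(0,2)$ case forces $r$ even since $9r^2-2r\equiv 3\pmod 4$ for odd $r$), so $\mathrm{Mov}(\hilb)$ is the half of the positive cone bounded by $\tilde{H}$ containing the isotropic Lagrangian-fibration ray $\tilde{H}-\frac{1}{3}B$. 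The orientation step is the only part you state loosely, but either mechanism you name (the nef isotropic class $\theta(-1,3,-9)$ coming from the fibration $M\to|3H|$ via $\Phi$, or the location of the Gieseker chamber) does pin down the correct half, so the argument is sound.
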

We note that the two boundaries of the movable cone correspond to the Hilbert-Chow morphism of $\hilb$ and the Lagrangian fibration of $M$, respectively. Recall that $\Mov(\hilb)$ has a finite locally polyhedral chamber decomposition whose chambers corresponds to birational models of $\hilb$. The walls in $\Mov(\hilb)$ are given by rays through $\tilde{H}-\Gamma B$, for certain $\Gamma\in\mathbb{Q}$ satisfying $0< \Gamma<\frac{1}{3}$. To any wall $W$ in $\Stab^\dagger(S)$ for $\vbf=(1,0,-9)$, one can associate a $\Gamma_W$ so that the wall-crossing in $\Stab^\dagger(S)$ and $\Mov(\hilb)$ can be identified (\cite[Example 13.5]{BM14b}). We now claim that for any wall in $\Mov(\hilb)$ given by $\tilde{H}-\Gamma B$, there is a wall in $\Stab^\dagger(S)$ for $\vbf$ associated with $\Gamma$, hence establishing a one-to-one correspondence between walls in $\Stab^\dagger(S)$ for $\vbf$ and in $\Mov(\hilb)$. We believe that the above claim is known to be true among experts (for similar results see \cite[Theorem 10.6, 10.8]{BM14a}), but we could not find a proof for our case in the literature.
\begin{prop}\label{WallBijection}
All minimal models of $\hilb$ arise as moduli spaces of stable objects with Mukai vector $\vbf=(1,0,-9)$, and their birational transformations are induced by crossing walls for $(1,0,-9)$ in $\Stab^\dagger(S)$. \ 

The above statements remain true if we replace $\vbf$ with $\vbf'=(0,3,-1)$.
\end{prop}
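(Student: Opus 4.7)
The plan is to apply the global Bayer-Macr\`i theory developed in \cite{BM14a,BM14b}. Recall that the Bayer-Macr\`i map $\ell_\vbf \colon \Stab^\dagger(S) \to \NS(M_\sigma(\vbf))_\R$ is piecewise linear and constant on chambers, and that by \cite[Theorem 1.4(a)]{BM14a} crossing a wall in $\Stab^\dagger(S)$ induces a birational transformation between the corresponding moduli spaces. Since $\vbf = (1,0,-9)$ is primitive with $\vbf^2 = 18 > 0$, these moduli spaces are smooth projective $K$-trivial birational models of $\hilb$; in particular, taking $\sigma = \sigma_{0,t}$ with $t \gg 0$ in the geometric chamber recovers $\hilb$ itself by Bridgeland's large-volume comparison. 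Thus each chamber in $\Stab^\dagger(S)$ for $\vbf$ produces a birational model, and each wall produces a birational transformation.

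It remains to show the converse: every chamber wall in $\Mov(\hilb)$, given by a ray $\tilde H - \Gamma B$ for some rational $\Gamma \in (0,\tfrac{1}{3})$, is realized by some wall in $\Stab^\dagger(S)$ for $\vbf$. Following the template of \cite[Theorems 10.6 and 10.8]{BM14a}, for each such $\Gamma$ I would construct an explicit rank two hyperbolic sublattice $\mathcal{L} \subset \Halg(S,\Z)$ containing $\vbf$, together with a class $\abf \in \mathcal{L}$ satisfying one of the numerical conditions of Theorem~\ref{wallcomp}, so that the resulting wall in $\Stab^\dagger(S)$ maps to $\tilde H - \Gamma B$ under $\ell_\vbf$. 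The existence of $\abf$ reduces to a Diophantine analysis of Pell-type equations, which in our setting have been carefully tabulated by Cattaneo~\cite{Cat19}; the Pell equations governing the possible rational $\Gamma$ on the $\Mov$ side match the ones governing walls on the $\Stab^\dagger$ side, yielding the desired bijection. One also needs to check that the constructed $\abf$ actually produces a wall meeting the geometric region (rather than merely a totally semistable locus off the geometric part of $\Stab^\dagger$), which follows from the standard recipe for realizing a hyperbolic lattice wall by a one-parameter family of stability conditions $\sigma_{x,y}$.

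For $\vbf' = (0,3,-1)$ the argument is identical, as $\vbf'$ is also primitive with $(\vbf')^2 = 18 > 0$ and $M(0,3,-1)$ is birational to $\hilb$, hence shares the same movable cone. Alternatively, the derived autoequivalence $\Phi\colon \mathrm{D}^b(S) \to \mathrm{D}^b(S)$ recalled in the introduction induces via Lemma~\ref{grpact} a homeomorphism of $\Stab^\dagger(S)$ that carries $\vbf$-walls to $\vbf'$-walls and identifies the respective moduli spaces; the statement for $\vbf'$ then follows formally from that for $\vbf$. The main obstacle is precisely the converse direction in the previous paragraph: ruling out the a priori possibility that some rational $\Gamma \in (0,\tfrac{1}{3})$ yielding a chamber wall in $\Mov(\hilb)$ is not realized by any wall in $\Stab^\dagger(S)$ for $\vbf$. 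Ruling this out requires matching the two classifications uniformly, and is where Cattaneo's Pell equation analysis \cite{Cat19} does the heavy lifting.
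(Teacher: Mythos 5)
Your first paragraph is fine and matches the paper: each chamber for $\vbf=(1,0,-9)$ gives a smooth projective birational model, the large-volume chamber gives $\hilb$, and wall-crossing gives the birational transformations. The problem is the converse, which you correctly identify as the crux but do not actually resolve. Your plan is to enumerate the walls of $\Mov(\hilb)$, then for each one exhibit a class $\abf$ as in Theorem~\ref{wallcomp} realizing it in $\Stab^\dagger(S)$, with the matching "done by Cattaneo's Pell equations." This has a structural problem: it presupposes a classification of the chamber walls of $\Mov(\hilb)$ obtained \emph{independently} of the Bridgeland picture, and you never say where that comes from. Cattaneo's tables, like Theorem~\ref{wallcomp} itself, compute the walls from the Bayer--Macr\`i numerical criteria, i.e.\ from the $\Stab^\dagger$ side; quoting them cannot by itself exclude the a priori possibility of a chamber wall of $\Mov(\hilb)$ (in the Hassett--Tschinkel/Markman decomposition) that is invisible to those criteria. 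To make your route work you would first have to classify the wall divisors and flopping classes of $\Mov(\hilb)$ by monodromy-theoretic means and then match — a genuinely heavier computation that you have not carried out.

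The paper avoids all of this with a soft connectedness argument, which is the idea missing from your proposal. Choose a small positive irrational $\epsilon$; by Remark~\ref{RestrictionStab}, $\sigma_{-3+\epsilon,y}$ is a stability condition for \emph{all} $y\in(0,+\infty)$. Arguing as in \cite[Theorem 10.8]{BM14a}, its image under the Bayer--Macr\`i map is a continuous path in $\Mov(\hilb)$ starting in the chamber of $M$ and ending on the ray spanned by $\tilde H$. Since $\hilb$ has Picard rank two, such a path must traverse every chamber of $\Mov(\hilb)$; hence every model is realized as $M_{\sigma_{-3+\epsilon,y}}(\vbf)$ for some $y$, and every wall of $\Mov(\hilb)$ is crossed by the path, i.e.\ comes from a wall in the $(x,y)$-plane. (One also checks that a wall with $\Gamma\notin(0,\tfrac13)$ does not meet the $(x,y)$-plane, so nothing is double-counted.) Your treatment of $\vbf'=(0,3,-1)$ via $\Phi_*$ and Lemma~\ref{grpact} is essentially what the paper does — $\Phi_*$ preserves the Mukai pairing and so carries walls for $\vbf$ bijectively to walls for $\vbf'$ — except that the paper still re-runs the path argument along $\sigma_{\epsilon',y}$ to locate those walls as semicircles in the $(x,y)$-plane, which is what is actually used later.
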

\begin{proof}
We will use stability conditions $\sigma_{x,y}$. If a wall for $\vbf$ in $\Stab^\dagger(S)$ is associated to $0<\Gamma<\frac13$, then it intersects the $(x,y)$-plane along the semicircle
\begin{align*}
    \left(x+\frac{1}{\Gamma}\right)^2+y^2=\frac{1}{\Gamma^2}-9.
\end{align*}
Note that if $\Gamma$ is not in the interval $(0,\frac{1}{3})$ (equivalently, if the wall is not a flopping wall), then the wall does not intersect the $(x,y)$-plane.
 Let $\epsilon$ be a small positive irrational number. By Remark \ref{RestrictionStab}, $\sigma_{-3+\epsilon,y}$ is a path of stability conditions for $0<y<+\infty$. Arguing as in the proof of \cite[Theorem 10.8]{BM14a}, we see that there exists a continuous path $l:(0,+\infty)\to \Mov(\hilb)$, starting in the chamber for $M$ and ending on the ray given by $\tilde{H}$, so that $M_{\sigma_{-3+\epsilon,y}}(\vbf)$ is isomorphic to the model corresponding to $l(y)$. Since $\hilb$ has Picard rank two, $l$ goes through all the chambers in $\Mov(\hilb)$. Hence all models of $\hilb$ can be realized as $M_{\sigma_{-3+\epsilon,y}}(\vbf)$ for some $y$, and all walls in $\Mov(\hilb)$ come from walls in the $(x,y)$-plane.\ 
 
 For the last statement of the theorem, note that since $\Phi_*$ preserves the Mukai pairing, the action of $\Phi_*:\mathrm{Stab}^\dagger(S)\to \mathrm{Stab}^\dagger(S)$ induces a bijection between the walls for $\vbf$ and those for $\vbf'=(0,3,-1)$. Then each wall in $\Stab^\dagger(S)$ for $\vbf'$ can be associated to a $\Gamma$. If a wall for $\vbf'$ in $\Stab^\dagger(S)$ is associated to $\Gamma$, with $0<\Gamma<\frac13$, then it intersects the $(x,y)$-plane along the semicircle
\begin{align*}
    \left(x+\frac{1}{6}\right)^2+y^2=\frac{1}{36}+\frac{\Gamma}{6(1-3\Gamma)}.
\end{align*}
The last statement now follows by considering the path of stability conditions $\sigma_{\epsilon',y}$, where $0<y<+\infty$ and $\epsilon'$ is a small positive irrational number, and arguing as in the previous paragraph.
\end{proof}

Applying Theorem \ref{wallcomp}, we give the full list of walls for $\vbf=(1,0,-9)$ in the table below. We remark that the column named `Wall' lists the intersection of the wall corresponding to $\abf$ with the $(x,y)$-plane. This convention will be use throughout the rest of this paper.

\begin{center}
    \begin{tabular}{|c|c|c|c|c|c|}
    \hline
    &&&&&\\[-6pt]
    $\Gamma$ & $\abf$ & $\abf^2$ & $(\vbf,\abf)$ & Wall& Type \\
    &&&&&\\[-6pt]
    \hline
    &&&&\\[-6pt]
    $0$ & $(0,0,-1)$ & 0 & 1 &$x=0$& divisorial \\
    &&&&&\\[-6pt]
    \hline
    &&&&&\\[-6pt]
    $\frac {2}{11}$ & $(1,-1,2)$ & $-2$ & 7 & $x^2+11x+y^2=-9$&flop \\
    &&&&&\\[-6pt]
    \hline
    &&&&&\\[-6pt]
    $\frac 15$ & $(1,-1,1)$ & 0 & 8& $x^2+10x+y^2=-9$ & flop \\
    &&&&&\\[-6pt]
    \hline
    &&&&&\\[-6pt]
    $\frac 29$ & $(1,-1,0)$ & 2 & 9 & $x^2+9x+y^2=-9$& flop \\
    &&&&&\\[-6pt]
    \hline
    &&&&&\\[-6pt]
    $\frac{1}{4}$ & $(0,1,-8)$ & 2 & 8 & $x^2+8x+y^2=-9$& flop \\
    &&&&&\\[-6pt]
    \hline
    &&&&&\\[-6pt]
    $\frac{2}{7}$ & $(0,1,-7)$ & 2 & 7 & $x^2+7x+y^2=-9$& flop \\
    &&&&&\\[-6pt]
    \hline
    &&&&&\\[-6pt]
    $\frac {4}{13}$ & $(1,-2,4)$ & 0 & 5& $x^2+\frac{13}{2}x+y^2=-9$ & flop\\
    &&&&&\\[-6pt]
    \hline
    &&&&&\\[-6pt]
    $\frac {6}{19}$ & $(-1,3,-10)$ & $-2$ & 1& $x^2+\frac{19}{3}x+y^2=-9$ & flop\\
    &&&&&\\[-6pt]
    \hline
    &&&&\\[-6pt]
    $\frac {8}{25}$ & $(-1,4,-16)$ & 0 & 7 & $x^2+\frac{25}{4}x+y^2=-9$& flop\\
    &&&&&\\[-6pt]
    \hline
    &&&&&\\[-6pt]
    $\frac {10}{31}$ & $(2,-5,13)$ & $-2$ & 5& $x^2+\frac{31}{5}x+y^2=-9$ & flop\\
    &&&&&\\[-6pt]
    \hline
    &&&&&\\[-6pt]
    $\frac {14}{43}$ & $(-2,7,-25)$ & $-2$ & 7 & $x^2+\frac{43}{7}x+y^2=-9$& flop\\
    &&&&&\\[-6pt]
    \hline
    &&&&&\\[-6pt]
    $\frac13$ & $(-1,3,-9)$ & 0 & 0 & & Lagrangian fibration\\[-4pt]
    &&&&&\\
    \hline
    \end{tabular}
    \vspace{0.2cm}
    \captionof{table}{Walls of $\mathrm{Mov}(\hilb)$}    
\end{center}

%\iffalse
\begin{center}
\begin{tikzpicture}
    \begin{axis}[axis lines= left, xtick = {-10, -9,...,-1,0}, ytick = {-1,0,...,7,8}, xlabel=$x$-axis, ylabel=$y$-axis, xmin = -10.5, xmax = 0.5, ymin = 0, ymax = 8, samples=500]
   
   \addplot[purple, domain= -4.5:-2] {sqrt(-pow(x,2) - 6.5*x - 9 )};
    \addlegendentry{$\Gamma=4/13$}
    \addplot[blue, domain= -5.31:-1.69] {sqrt(-pow(x,2) - 7*x - 9 )};
    \addlegendentry{$\Gamma=2/7$}
    \addplot[red, domain = -6.66:-1.35]{sqrt(-pow(x,2) - 8*x - 9)};
    \addlegendentry{$\Gamma=1/4$}
    \addplot[orange, domain = -8: -1.146]{sqrt(-pow(x,2) - 9*x - 9)};
    \addlegendentry{$\Gamma=2/9$}
    \addplot[brown, domain = -9:-1]{sqrt(-pow(x,2) - 10*x - 9};
    \addlegendentry{$\Gamma=1/5$}
    \addplot[green, domain = -10.2:-0.8902]{sqrt(-pow(x,2) - 11*x - 9)};
    \addlegendentry{$\Gamma=2/11$}
    \addplot[black,domain=-10.5:2, thick]{1};
 \end{axis}
\end{tikzpicture}
\captionof{figure}{First six walls for $\vbf=(1,0,-9)$}
\end{center}
%\fi

\begin{rem}
\label{rank_one_walls}
In a forthcoming paper~\cite{QS22b} we will introduce the notion of `rank one' walls, which roughly means that in the decomposition $\vbf=\abf+\bbf$ of the Mukai vector at least one of $\abf$ and $\bbf$ must have rank one. In Hellmann's analysis of the rank two Beauville-Mukai system~\cite{Hel20}, all of the walls are of rank one. For the rank three Beauville-Mukai system, notice from the above table that starting from $S^{[10]}$, all of the walls with slope $\Gamma\leq\frac{4}{13}$ have rank one. Similarly, we will see shortly that starting from $M$, all of the walls with slope $\Gamma\geq\frac{4}{13}$ have rank one. The wall $\Gamma=\frac{4}{13}$ has rank one when viewed from both the $S^{[10]}$ and the $M$ sides.

The advantage of having rank one walls is that it is much easier to describe the exceptional loci. However, it is not the case that all walls for degree two K3 surfaces have rank one. Indeed, the rank four Beauville-Mukai system $M(0,4,-1)$ will have walls of higher rank.
\end{rem}

\begin{rem}
In the fifth and sixth rows of the table one might expect the vectors $(1,-1,-1)$ and $(1,-1,-2)$, and indeed they would define these walls. However, these are actually the $\bbf$ vectors, and we instead use the generators $\abf=\vbf-\bbf=(0,1,-8)$ and $(0,1,-7)$, respectively, as they satisfy the constraints of Theorem~\ref{wallcomp}.
\end{rem}

By Proposition \ref{WallBijection}, there are $11$ chambers in $\Mov(\hilb)$, corresponding to $11$ different birational models of $\hilb$. The $0<\Gamma<\frac{2}{11}$ chamber corresponds to $\hilb$ itself while the $\frac{14}{43}<\Gamma<\frac13$ chamber corresponds to $M$. We denote the remaining birational models by $X_i$, $1\leq i\leq9$, with $X_1$ corresponding to the $\frac{2}{11}<\Gamma <\frac15$ chamber.

We also note that for $\Gamma\geq \frac{8}{25}$ the corresponding walls have radii $<1$. We choose to study them from the $M$ side, where the corresponding walls will have radii $>1$. The wall with $\Gamma=\frac{6}{19}$ can be studied from either side. We opt to study it from the $M$ side also. This strategy has the following advantages:
\begin{itemize}
    \item The bound $y>1$ on both sides will make the analysis of the moduli spaces arising from the wall-crossings easier.
    \item Whether we start from $S^{[10]}$ or from $M$, the exceptional loci for walls with radii larger than $1$ have increasing dimensions, making them easier to describe. 
\end{itemize}
We now state the main theorem:

\begin{thm}\label{main}
Let $(S,H)$ be a general polarized K3 surface with $\Pic(S)=\Z [H]$ and $H^2=2$. There are eleven birational models of $S^{[10]}$ or $M\coloneqq M(0,3,-1)$, respectively. They are connected by a chain of flopping contractions
\[ \xymatrix@R-1pc@C-2pc{
& \Bl_{W_0}S^{[10]} \ar[dr]\ar[dl] && \Bl_{\tilde{W}_1}X_1\ar[dr]\ar[dl] && \Bl_{\tilde{W}_2}X_2\ar[dr]\ar[dl] && \Bl_{\tilde{W}_3}X_3\ar[dr]\ar[dl] && \Bl_{\widetilde{W_4\cup \Wcal_0}}X_4\ar[dr]\ar[dl] && \Bl_{\tilde{\Wcal_1}}X_5\ar[dr]\ar[dl]\\
S^{[10]} \ar@{-->}[rr]^{g_0} && X_1 \ar@{-->}[rr]^{g_1} && X_2 \ar@{-->}[rr]^{g_2} && X_3 \ar@{-->}[rr]^{g_3} &&  X_4 \ar@{-->}[rr]^{g_4} &&X_5\ar@{-->}[rr]^{g_5} &&X_6\xrightarrow{\Phi}} \]
\[ \xymatrix@R-1pc@C-2pc{
&  \Bl_{\tilde{Z}_8}X_7\ar[dr]\ar[dl] && \Bl_{\tilde{Z}_{1,3}}X_8\ar[dr]\ar[dl] && \Bl_{\tilde{Z}_4}X_9\ar[dr]\ar[dl] && \Bl_{Z_2}M \ar[dr]\ar[dl]\\
 \xrightarrow{\Phi}X_6' && X_7 \ar@{-->}[ll]^{g_6} && X_8 \ar@{-->}[ll]^{g_7} &&  X_9 \ar@{-->}[ll]^{g_8} &&M \ar@{-->}[ll]^{g_{9}}} \]
for some subvarieties $W_0 \subset W_1\subset\cdots\subset W_4 \subset S^{[10]}$ and $\Wcal_0\subset\Wcal_1\subset \hilb$ such that
\begin{itemize}
\item $W_0$ is a $\Pbb^8$-bundle over $M(0,1,-11)$;
\item $W_{i}\backslash \overline{W_{i-1}}$ is a $\Pbb^{8-i}$-bundle over an open subset of $S^{[i]}\times M(0,1,i-11)$ for $i=1,2,3$;
\item $W_4\backslash(\Wcal_0\cup \overline{W_3})$ is a $\Pbb^4$ bundle over an open subset in $S^{[4]}\times M(0,1,-7)$, while  $\Wcal_0\backslash \overline{W_4}$ is generically a $\Pbb^5$-bundle over $M^{st}(0,2,-14)$;
\item $\Wcal_1\backslash(\Wcal_0\cup\overline{W_4})$ is generically a $\Pbb^4$-bundle over $S\times M(0,2,-13)$;
\end{itemize}
and closed subvarieties $Z_2 \subset Z_4 \subset Z_8\subset M$ and $Z_2\subset Z_{1,3}\subset Z_8\subset M$ such that
\begin{itemize}
\item $Z_2$ is a $\Pbb^8$-bundle over $S^{[2]}$;
\item $Z_4$ is generically isomorphic to a $\Pbb^6$-bundle over $S^{[4]}$;
\item $Z_{1,3}$ is generically isomorphic to a $\Pbb^6$-bundle over $S\times S^{[3]}$;
\item $Z_8$ is generically isomorphic to a $\Pbb^2$-bundle over $S^{[8]}$.
\end{itemize}
Here $\widetilde{\bullet}$ denotes the strict transform of the set $\bullet$ under suitable birational maps. The model $X_6$ is isomorphic to $X_6'$ via $\Phi$.
\end{thm}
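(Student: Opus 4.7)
The plan is to apply the Bayer–Macrì wall-crossing machinery \cite{BM14a, BM14b} systematically from both ends of the chain: starting at $S^{[10]} = M_H(1,0,-9)$ and crossing the walls with $\Gamma = \frac{2}{11}, \frac{1}{5}, \frac{2}{9}, \frac{1}{4}, \frac{2}{7}, \frac{4}{13}$ to reach the common model $X_6$, and symmetrically starting from $M = M_H(0,3,-1)$ and crossing the walls with $\Gamma = \frac{14}{43}, \frac{10}{31}, \frac{8}{25}, \frac{6}{19}$ to reach $X_6'$. Proposition \ref{WallBijection} guarantees that these paths in $\mathrm{Stab}^\dagger(S)$ account for every chamber in $\mathrm{Mov}(S^{[10]})$, so it will suffice to identify $X_6 \cong X_6'$ at the junction.

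For each wall $W$ I would first read off the associated hyperbolic lattice from the table and decompose $\vbf = \abf + \bbf$. Since every wall under consideration is rank one (Remark \ref{rank_one_walls}), one summand has rank one and the generic $S$-equivalent object destabilized across $W$ sits in a short exact sequence $0 \to A \to E \to B \to 0$ with $A \in M_\sigma(\abf)$ and $B \in M_\sigma(\bbf)$. I would then identify $A$ and $B$ with explicit sheaves (ideal sheaves of zero-dimensional subschemes, pure sheaves on lines/conics, or twists thereof) and match the resulting destabilized locus with one of $W_i$, $\Wcal_j$ on the $\hilb$ side or $Z_i$, $Z_{1,3}$ on the $M$ side, whose projective bundle structures were already established in Section \ref{BNloci} via Propositions \ref{part1}, \ref{part2}, \ref{part3}, and \ref{BNforM}. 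The flop structure and blow-up description of each intermediate $X_k$ would then be deduced from \cite[Section 14]{BM14b}, together with the classification in Theorem \ref{wallcomp} confirming that each wall is a flopping wall and not a fake or divisorial one. Auxiliary computations of moduli spaces of stable objects for intermediate Mukai vectors $(0,1,*)$, $(0,2,*)$ and for extensions relevant to the higher walls would be deferred to Section~5, as announced.

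For the meeting at the center I would invoke Lemma \ref{grpact}: the autoequivalence $\Phi := T_{\mathcal{O}_S(-3H)} \circ (- \otimes \mathcal{O}_S(3H))$ satisfies $\Phi_*(1,0,-9) = (0,3,-1)$ as an isometry of the Mukai lattice, and sends $\sigma$-stable objects to $\Phi_*(\sigma)$-stable objects. Choosing a generic stability condition $\sigma$ inside the chamber $\frac{4}{13} < \Gamma < \frac{6}{19}$ for $\vbf$, a direct computation of the central charge shows that $\Phi_*(\sigma)$ lies in the chamber between the walls $\Gamma = \frac{4}{13}$ and $\Gamma = \frac{6}{19}$ for $\vbf'$, so $\Phi$ induces an isomorphism $X_6 = M_\sigma(1,0,-9) \xrightarrow{\sim} M_{\Phi_*(\sigma)}(0,3,-1) = X_6'$, completing the chain.

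The hard part will be the case-by-case matching of destabilized loci with the Brill–Noether strata of Section~\ref{BNloci}, particularly at the fourth, fifth and sixth walls, which the introduction identifies as stratified elementary modifications in Markman's sense \cite{Mar01}. At the fourth wall in particular, Lemma \ref{linear systems}(4) forces the exceptional locus to split into two irreducible components, matching $W_4$ and $\Wcal_0$ and meeting along $W_4 \cap \Wcal_0$; proving that no further strata appear, that both components have exactly the $\Pbb$-bundle structure predicted by Proposition \ref{part2}, and that their strict transforms under the earlier flops assemble correctly into the blow-up center $\widetilde{W_4 \cup \Wcal_0}$, will require a complete enumeration of all Jordan–Hölder decompositions of the destabilized objects across this wall. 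The analogous analysis at the fifth and sixth walls (and symmetrically at the seventh and eighth on the $M$ side, where $Z_8$ and $Z_{1,3}$ appear with multiple strata) is the main technical obstacle.
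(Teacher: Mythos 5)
Your plan reproduces the paper's proof essentially step for step: the same two paths of stability conditions (six flopping walls crossed from the $S^{[10]}$ side along $\sigma_{-3,t}$, four from the $M$ side along $\sigma_{-1/6,t}$), the same reduction of each exceptional locus to the Brill--Noether strata already described in Section~\ref{BNloci}, the same appeal to Proposition~\ref{WallBijection} to see that every chamber of $\mathrm{Mov}(\hilb)$ is reached, and the same use of $\Phi$ to identify $X_6$ with $X_6'$. One step, however, would not go through as written. You say that ``a direct computation of the central charge shows that $\Phi_*(\sigma)$ lies in the chamber between $\Gamma=\frac{4}{13}$ and $\Gamma=\frac{6}{19}$ for $\vbf'$.'' The central charge alone cannot locate a stability condition in a chamber: a stability condition is a pair (slicing, central charge), and a priori $\Phi_*(\sigma)$ need not even be geometric, i.e.\ need not lie in the $\widetilde{\mathrm{GL}}_2^+(\R)$-orbit of any $\sigma_{x',y'}$. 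The paper closes this gap by observing that the $\sigma_{(-3,6/5)}$-stable objects of Mukai vector $(-1,3,-9)$ are exactly the objects $\RHom(\Ical_p,\Ocal_S(-3))[1]$ and that $\Phi$ carries these to the skyscraper sheaves $\Ocal_p$; hence all skyscrapers are $\Phi_*(\sigma_{(-3,6/5)})$-stable, $\Phi_*(\sigma_{(-3,6/5)})$ is geometric, and only then can one write it as $\sigma_{x',y'}$ up to the group action and pin down its chamber. You need this (or an equivalent) argument for the junction to work.

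Two smaller corrections to your account of the stratified walls. The two irreducible components of the fourth exceptional locus are not forced by Lemma~\ref{linear systems}(4); they come from the fact that the wall admits the three decompositions $(1,0,-9)=(1,-1,-2)+(0,1,-7)=(1,-2,5)+(0,2,-14)=(1,-2,5)+(0,1,-7)+(0,1,-7)$, the last refining the first two, so that \cite[Section 14]{BM14b} gives $E_4=E_4^1\sqcup E_4^2\sqcup E_4^3$ with $E_4^3$ in the closure of both $E_4^1$ and $E_4^2$; Lemma~\ref{linear systems} only enters when matching these strata with $W_4$, $\Wcal_0$, and $W_4\cap\Wcal_0$. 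And on the $M$ side only the wall producing $Z_8$ carries a nontrivial stratification (via $(0,3,-1)=(1,0,1)+(-1,3,-2)=(1,0,1)+(1,0,1)+(-2,3,-3)$); the walls producing $Z_{1,3}$ and $Z_4$ each admit a single decomposition and their exceptional loci are irreducible, contrary to your claim that $Z_{1,3}$ appears with multiple strata.
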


\begin{rem}
    One technical difficulty is the potential existence of totally semistable walls. Fortunately, in our case we can avoid totally semistable walls by choosing the path of wall-crossing properly (as we will do in the next proof). This issue was addressed in \cite[Section 5]{QS22b} in greater generality and we refer the readers there for proofs.
\end{rem}

\begin{proof}
There are ten flopping walls in Table $1$. We label them the $i$-th wall for $i=0,1,\ldots, 9$ with increasing $\Gamma$ (for example, the wall with $\Gamma=\frac{2}{11}$ is the $0$-th wall). We refer the reader to \cite[Section 14]{BM14b} for some notation and results which will be useful for our analysis of these flopping walls. We study the first six flopping walls from the $\hilb$ side. Consider the path of stability conditions $\sigma_t\coloneqq\sigma_{-3,t}$ for $t\in(1,+\infty)$. For $0\leq i\leq 5$ the path intersects the $i$-th wall at $t=t_i$, where $t_i=\sqrt{15-3i}$ for $0\leq i\leq 4$ and $t_5=\sqrt{3/2}$. Next we describe the wall-crossings along $\sigma_t$.

The $0$-th wall corresponds to the decomposition
\begin{align*}
    (1,0,-9)=(1,-1,2)+(0,1,-11).
\end{align*}
By the appendix, near $t=t_0$, $M_{\sigma_t}(0,1,-11)=M_H(0,1,-11)$.
An ideal sheaf $\Ical_\xi$ is in the exceptional locus $E_0$ of $g_0$ in $\hilb$ if and only if it fits into the short exact sequence
\begin{align*}
    0\to \Ocal_S(-1)\to \Ical_\xi\to \Ecal_0\to 0,
\end{align*}
where $\Ecal_0\in M_H(0,1,-11)$. This is equivalent to $\xi\in W_0$. Thus $g_0$ is the flop of $W_0$ in $\hilb$. \\

For $i=1,2,$ and $3$, the $i$-th wall corresponds to the decomposition of Mukai vectors
\begin{align*}
    (1,0,-9)=(1,-1,2-i)+(0,1,-11+i).
\end{align*}
By the appendix, near $t=t_i$, we have $M_{\sigma_t}(1,-1,2-i)\cong S^{[i]}$ and $M_{\sigma_t}(0,1,-11+i)=M_H(0,1,-11+i)$. Hence an ideal sheaf $\Ical_\xi$ is in the exceptional locus $E_i$ of $g_i$ in $X_i$ if and only if $\Ical_\xi\notin \overline{W_{i-1}}$ and it fits into the short exact sequence
\begin{align*}
    0\to \Ical_{\zeta_i}(-1)\to \Ical_\xi\to \Ecal_i\to 0,
\end{align*}
where $\zeta_i\in S^{[i]}$ and $\Ecal_i\in M_H(0,1,-11+i)$. This is equivalent to $\xi\in \overline{W_i}\backslash\overline{W_{i-1}}$. Moreover, the exceptional locus of $g_i$ in $X_i$ is the strict transform of $\overline{W_{i}}$, which we denote by $\tilde{W_i}$. \\

The study of the fourth wall is more complicated. The fourth wall corresponds to the decompositions of Mukai vectors
\begin{align*}
    (1,0,-9)&=(1,-1,-2)+(0,1,-7)\\
    &=(1,-2,5)+(0,2,-14)\\
    &=(1,-2,5)+(0,1,-7)+(0,1,-7),
\end{align*}
where the last decomposition is a refinement of the previous two. By \cite[Section 14]{BM14b}, the exceptional locus $E_4$ of $g_4$ in $X_4$ has a stratification $E_4=E_4^1\coprod E_4^2\coprod E_4^3$ into locally closed subsets, where $E_4^j$ corresponds to the $j$-th line in the decomposition above and $E_4^3$ is in the closure of both $E_4^1$ and $E_4^2$. Thus $E_4$ has two irreducible components given by $E_4^1\cup E_4^3$ and $E_4^2\cup E_4^3$. By the appendix, near $t=t_4$, we have $M_{\sigma_t}(1,-2,5)=\{\Ocal_S(-2)\}$, $M_{\sigma_t}(0,1,-7)=M_H(0,1,-7)$, and $M^{st}_{\sigma_t}(0,2,-14)=M^{st}_H(0,2,-14)$. For $t>t_4$, $M_{\sigma_t}(1,-1,-2)=M_H(1,-1,-2)\cong S^{[4]}$, but $M^{st}_{\sigma_{t_4}}(1,-1,-2)\cong S^{[4]}\backslash\{\xi_4\;|\;h^0(\Ical_{\xi_4}(1))\geq 1\}$. Hence an ideal sheaf $\Ical_\xi$ is in $E_4^3$ if and only if $\xi\notin \overline{W_3}$ and for $t$ near $t_4$ with $t>t_4$ it has a HN-filtration (for $\sigma_t$)
    \[
  \xymatrix{
  0 \ar[r] & \Ocal_S(-2) \ar[r] \ar[d] & F \ar[r] \ar[d] & \Ical_\xi \ar[d]\\
  & \Ocal_S(-2) \ar@{-->}[lu] & \Ecal_6 \ar@{-->}[lu] & \Ecal'_6,
  \ar@{-->}[lu] }
  \]
where $\Ecal_6$, $\Ecal_6'\in M_H(0,1,-7)$. By stability, $F$ is a nontrivial extension of $\Ecal_6$ by $\Ocal_S(-2)$, and hence $F=\Ical_{\zeta_4}(-1)$ for some $\zeta_4\in S^{[4]}$. This is equivalent to $\xi\in (\overline{W_4}\cap\Wcal_0)\backslash \overline{W_3}$.

Now $\Ical_\xi\in E_4^1$ if and only if $\xi\notin \overline{W_3}$ and it fits into a short exact sequence
\begin{align*}
    0\to \Ical_{\eta_4}(-1)\to \Ical_\xi\to \Ecal_6''\to 0,
\end{align*}
where $\eta_4\in M^{st}_{\sigma_{t_4}}(1,-1,-2)=S^{[4]}\backslash\{\xi_4\;|\;h^0(\Ical_{\xi_4}(1))\geq 1\}$ and $\Ecal_6''\in M_H(0,1,-7)$. This is equivalent to $\xi\in \overline{W_4}\backslash(\Wcal_0\cup\overline{W_3})$.

Similarly, $\Ical_\xi\in E_4^2$ if and only if $\xi\notin \overline{W_3}$ and it fits into a short exact sequence
\begin{align*}
    0\to \Ocal_S(-2)\to \Ical_\xi\to \Fcal\to 0,
\end{align*}
where $\Fcal\in M^{st}_{\sigma_t}(0,2,-14)=M^{st}_H(0,2,-14)$. This is equivalent to $\xi\in \Wcal_0\backslash\overline{W_4}$. Altogether, we see that $E_4$ is the strict transform of $W_4\cup\Wcal_0$.\\

The fifth wall corresponds to the decompositions of Mukai vectors
\begin{align*}
    (1,0,-9)&=(1,-2,4)+(0,2,-13)\\
    &=(1,-2,4)+(1,-2,4)+(-1,4,-17).
\end{align*}
By \cite[Section 14]{BM14b}, the exceptional locus $E_5$ in $X_5$ for $g_5$ has a stratification $E_5=E_5^1\coprod E_5^2$ where $E_5^j$ corresponds to the $j$-th line of the decomposition above. Note that $E_5^2$ is closed in $E_5$. Near $t=t_5$, $M_{\sigma_t}(1,-2,4)\cong S$ by $\Ical_p(-2)\mapsto p$, and for $t>t_5$ we have $M_{\sigma_t}(0,2,-13)=M_H(0,2,-13)$ and
\begin{align*}
M^{st}_{\sigma_{t_5}}(0,2,-13)=M_H(0,2,-13)\backslash\{\Ical_{q/Q}(-2)\;|\;Q\in|2H|\text{ and } q\in Q\}.
\end{align*}
On the other hand, near $t=t_5$, $\{\Ical_{q/Q}(-2)\;|\; Q\in|2H|\text{ and } q\in Q\}$ parametrizes precisely the extensions of $\Ocal_S(-4)[1]$ by $\Ical_q(-2)$. Altogether, an ideal sheaf $\Ical_\xi$ is in $E_5$ if and only if $\xi\notin \overline{W_4}\cup\Wcal_0$ and it fits into a short exact sequence
\begin{align*}
    0\to \Ical_p(-2)\to \Ical_\xi\to \Fcal'\to 0,
\end{align*}
where $p\in S$ and $\Fcal'\in M_H(0,2,-13)$. This is equivalent to $\xi \in\overline{\Wcal_1}\backslash(\Wcal_0\cup\overline{W_4}).$\\ 

We analyze the remaining walls from the $M$ side. Let $\vbf'=(0,3,-1)=\Phi_*\vbf$. The next table describes the remaining walls from the $M$ side, with $\abf'=\Phi_*\abf$. Note that all circles have radii larger than $1$.
\begin{center}
    \begin{tabular}{|c|c|c|c|c|c|}
    \hline
    &&&&&\\[-6pt]
    $\Gamma$ & $\abf'$  & $(\abf')^2$ & $(\vbf',\abf')$ & Wall& Type \\
    &&&&&\\[-6pt]
    
    \hline

    &&&&&\\[-6pt]
    $\frac {6}{19}$ & $(1,0,1)$ & $-2$ & $1$ & $\left(x+\frac{1}{6}\right)^2+y^2=\left(\frac{\sqrt{37}}{6}\right)^2$ & flop\\
    &&&&&\\[-6pt]
    \hline
    &&&&&\\[-6pt]
    $\frac {8}{25}$ & $(1,1,1)$ & $0$ & $7$ & $\left(x+\frac{1}{6}\right)^2+y^2=\left(\frac{7}{6}\right)^2$& flop\\
    &&&&&\\[-6pt]
    \hline
    &&&&&\\[-6pt]
    $\frac {10}{31}$ & $-(1,-1,2)$ & $-2$ & $5$ & $\left(x+\frac{1}{6}\right)^2+y^2=\left(\frac{\sqrt{61}}{6}\right)^2$ & flop\\
    &&&&&\\[-6pt]
    \hline
    &&&&&\\[-6pt]
    $\frac {14}{43}$ & $(1,1,2)$ & $-2$ & $7$ & $\left(x+\frac{1}{6}\right)^2+y^2=\left(\frac{\sqrt{85}}{6}\right)^2$& flop\\
    &&&&&\\[-6pt]
    \hline
    &&&&&\\[-6pt]
    $\frac13$ & $(0,0,1)$ & $0$ & $0$ & & Lagrangian fibration\\[-4pt]
    &&&&&\\
    \hline
    \end{tabular}
    \vspace{0.2cm}
    \captionof{table}{Walls for $M$ with radii larger than $1$} 
\end{center}

%\iffalse
\begin{center}
\begin{tikzpicture}
    \begin{axis}[axis lines= left, xtick = {-2, -1.5,...,1,1.5}, ytick = {0,0.5,...,2,2.5}, xlabel=$x$-axis, ylabel=$y$-axis, xmin = -2, xmax = 1.5, ymin = 0, ymax = 2.5, samples=500]
   
    \addplot[blue, domain= -1.70:1.37] {sqrt(-pow(x,2) - x/3 +7/3 )};
    \addlegendentry{$\Gamma=14/43$}
   \addplot[red, domain = -1.468:1.135]{sqrt(-pow(x,2) - x/3 +5/3)};
    \addlegendentry{$\Gamma=10/31$}
    \addplot[orange, domain = -1.34: 1]{sqrt(-pow(x,2) - x/3 +4/3)};
    \addlegendentry{$\Gamma=8/25$}
    \addplot[brown, domain = -1.19:0.847]{sqrt(-pow(x,2) - x/3 +1};
    \addlegendentry{$\Gamma=6/19$}
    \addplot[black,domain=-2:1.5]{1};
    \end{axis}
\end{tikzpicture}
\captionof{figure}{Last four walls for $\vbf'=(0,3,-1)$}
\end{center}
%\fi

We keep the labeling of the walls from the $\hilb$ side; thus the walls corresponding to $\Gamma =\frac{14}{43}$ and $\frac{10}{31}$ will be called the ninth and eighth walls, respectively. Consider the path $\sigma'_t:=\sigma_{-\frac16,t}$ for $t\in (1,+\infty)$. For $6\leq i\leq 9$, this path crosses the $i$-th wall at $t'_i$, where $t'_i$ is the radius of the $i$-th wall. We now describe the wall-crossings along $\sigma'_t$.

The ninth wall corresponds to the decomposition of Mukai vectors
\begin{align*}
    (0,3,-1)=(1,1,2)+(-1,2,-3).
\end{align*}
By the appendix, near $t=t'_9$, $S^{[2]}\cong M_{\sigma'_t}(-1,2,-3)$ by $\xi_2\mapsto \RHom(\Ical_{\xi_2},\Ocal_S)(-2)[1]$. Hence $\Ecal$ is in the exceptional locus $E_9$ of $g_9$ in $M$ if and only if it fits into the (non-split) distinguished triangle
\begin{align*}
    \Ocal_S(1)\to\Ecal\to \RHom(\Ical_{\xi_2},\Ocal_S)(-2)[1].
\end{align*}
Note that there exists a short exact sequence
\begin{align*}
    0\to \Ext^1(\RHom(\Ical_{\xi_2},\Ocal_S(-2))[1],\Ocal_S(1))\to \Hom(\Ocal_S(-2),\Ocal_S(1))\to \Ext^2(\Ocal_{\xi_2},\Ocal_S(1))\to 0.
\end{align*}
So $\Ecal$ is in the exceptional locus if and only if it fits into the exact sequence
\begin{align*}
    0\to \Ocal_S(-2)\xrightarrow{s_9} \Ocal_S(1)\to \Ecal\to \Ocal_{\xi_2}\to 0
\end{align*}
where $s_9\not\equiv0$ and $\xi_2$ is contained in the cubic $\{s_9=0\}$.
When $s_9$ corresponds to a smooth cubic $C\in|3H|$, this means precisely that $\Ecal(-1)\in \BN^0_2(B^\circ)$. Since both the exceptional locus and $Z_2$ are closed and irreducible, they must be the same.\\

The eighth wall corresponds to the decomposition of Mukai vectors
\begin{align*}
    (0,3,-1)=(1,2,1)+(-1,1,-2).
\end{align*}
By the appendix, we see that there are two birational models for $S^{[4]}$. Let ${}^{\sharp}S^{[4]}$ denote the other model, which is obtained by flopping at the locus parametrizing length four subschemes on a line. Then near $t=t_8'$, $M_{\sigma'_t}(1,2,1)\cong{}^{\sharp}S^{[4]}$. We note that all points in ${}^{\sharp}S^{[4]}$ corresponds to sheaves and they are torsion free if and only if they are not in the exceptional locus of the flop of $S^{[4]}$.

By \cite[Section 14]{BM14b}, the exceptional locus $E_8$ of $g_8$ in $X_9$ is irreducible and $\Ecal\in E_8$ if and only if it fits into a (non-split) distinguished triangle
\begin{align*}
   I\to \Ecal\to \Ocal_S(-1)[1],
\end{align*}
where $I\in M_{\sigma'_8}(1,2,1)$. In particular, there exists an open dense subset $E_8^\circ\subset E_8$ consisting of $\Ecal$ fitting into a short exact sequence
\begin{align*}
    0\to \Ocal_S(-1)\xrightarrow{s_8} \Ical_{\xi_4}(2)\to \Ecal\to 0,
\end{align*}
where $h^0(\Ical_{\xi_4}(1))=0$ and $s_8$ corresponds to a smooth curve in $|3H|$. It is easy to see that $\Ecal\in E_8^\circ$ if and only if $\Ecal\in \BN^0_4(B^\circ)\backslash Z_2$. Thus the exceptional locus of $g_8$ in $X_9$ is the strict transform of $Z_4$.\\

The seventh wall corresponds to the decomposition of Mukai vectors
\begin{align*}
    (0,3,-1)=(1,1,1)+(-1,2,-2).
\end{align*}
By the appendix, near $t=t'_7$, $S\cong M_{\sigma'_t}(1,1,1)$ by $p\mapsto \Ical_p(1)$ and $S^{[3]}\cong M_{\sigma'_t}(-1,2,-2)$ generically by $\xi_3\mapsto \RHom(\Ical_{\xi_3},\Ocal_S)(-2)[1]$ for $\xi_3$ not on a line. The exceptional locus $E_7$ of $g_7$ in $X_8$ is irreducible and $\Ecal\in E_8$ if and only if it fits into a (non-split) distinguished triangle
\begin{align*}
    \Ical_p(1)\to\Ecal\to J,
\end{align*}
where $p\in S$ and $J\in M_{\sigma'_7}(-1,2,-2)$. In particular, there exists an open dense subset $E_7^\circ\subset E_7$ consisting of $\Ecal$ fitting into
\begin{align*}
    \RHom(\Ical_{\xi_3},\Ocal_S)(-2)\xrightarrow{s_7}\Ical_p(1)\to\Ecal,
\end{align*}
where $h^0(\Ical_{\xi_3}(1))=0$, $p\notin\xi_3$, and $s_7$ corresponds to a smooth cubic in $|3H|$ (assuming the first two conditions, $s_7$ corresponds to a cubic containing $\xi_3\cup\{p\}$). It is easy to see that $\Ecal\in E_7^\circ$ if and only if $\Ecal\in Z^\circ_{1,3}\backslash Z_4$. Thus the exceptional locus of $g_7$ in $X_8$ is the strict transform of $Z_{1,3}$.\\

The sixth wall corresponds to the decompositions of Mukai vectors
\begin{align*}
    (0,3,-1)&=(1,0,1)+(-1,3,-2)\\
    &=(1,0,1)+(1,0,1)+(-2,3,-3).
\end{align*}
We note that for $t'$ near $t'_6$, $M_{\sigma_{t'}}(-1,3,-2)$ is birational to $S^{[8]}$. 
Arguing as we did for the fifth wall, we see that $\Ecal$ is in the exceptional locus $E_6$ of $g_6$ in $X_7$ if and only if it fits into a (non-split) distinguished triangle
\begin{align*}
    \Ocal_S\to \Ecal\to K,
\end{align*}
where $K\in M_{\sigma_{t'}}(-1,3,-2)$ for $t'>t'_6$ and $t'$ close to $t'_6$. By the appendix, there exists an open dense subset $E_6^\circ\subset E_6$  consisting of $\Ecal$ fitting into 
\begin{align*}
     \RHom(\Ical_{\xi_8},\Ocal_S(-3))\xrightarrow{s_6}\Ocal_S\to \Ecal,
\end{align*}
where $h^0(\Ical_{\xi_8}(2))=0$, $h^0(\Ical_{\xi_l}(1))=0$ for any subscheme $\xi_l\subset\xi_8$ of length $5\leq l\leq 8$ (these two conditions hold if and only if $\xi_8$ is not in the exceptional loci of any of the first four flopping walls for $S^{[8]}$), and $s_6$ corresponds to a smooth cubic in $|3H|$. It is easy to see that $\Ecal\in E_6^\circ$ if and only if $\Ecal\in \BN^0_8\backslash( Z_4\cup Z_{1,3})$. Thus the exceptional locus of $g_6$ in $X_7$ is the strict transform of $Z_8$.\\

Lastly, we show that $\Phi$ induces an isomorphism $X_6\to X_6'$. Note that the point $(x,y)=(-3,6/5)$ is below the wall with $\Gamma=\frac{4}{13}$ on the $\hilb$ side and satisfies $y>1$ (in particular, it is above the wall with $\Gamma=\frac{6}{19}$); thus $X_6=M_{\sigma_{(-3,6/5)}}$. By Lemma \ref{grpact}, $\Phi$ induces an isomorphism 
\begin{align*}
    M_{\sigma_{(-3,6/5)}}(1,0,-9)\cong M_{\Phi_*(\sigma_{(-3,6/5)})}(0,3,-1).
\end{align*}
We note that $\Phi_*(-1,3,-9)=(0,0,1)$ and $\sigma_{(-3,6/5)}$-stable objects with Mukai vector $(-1,3,-9)$ are of the form $\RHom(\Ical_p,\Ocal_S(-3))[1]$ for $p\in S$. It is easy to check that
$$\Phi(\RHom(\Ical_p,\Ocal_S(-3))[1])\cong \Ocal_p,$$
and thus all skyscraper sheaves are $\Phi_*(\sigma_{(-3,6/5)})$-stable and $\Phi_*(\sigma_{(-3,6/5)})$ is geometric. Since the action of $\widetilde{\mathrm{GL}}_2^+(\R)$ does not change objects in the moduli space, $M_{\Phi_*(\sigma_{(-3,6/5)})}(0,3,-1)=M_{\sigma_{x',y'}}(0,3,-1)$ for some $x'\in\R$ and $y'>0$. It is clear that $\sigma_{x',y'}$ must lie in a chamber between the walls corresponding to $\Gamma=\frac{4}{13}$ and $\Gamma=\frac{6}{19}$ on the $M$ side. Moreover, 
one can check that $M_{\sigma_{x',y'}}(0,3,-1)=X_6'$. To conclude, we have an isomorphism
\begin{align*}
   X_6= M_{\sigma_{(-3,6/5)}}(1,0,-9)\xrightarrow{\Phi} M_{\Phi_*(\sigma_{(-3,6/5)})}(0,3,-1)=M_{\sigma_{x',y'}}(0,3,-1)=X_6'
\end{align*}
induced by $\Phi$.
\end{proof}

\begin{cor}\label{BNinj}
For $i=2,4$, and $8$, let $\Ecal_i\in M_H(0,3,i-9)$.
\begin{enumerate}
    \item If $h^0(\Ecal_2)\neq0$, then $h^0(\Ecal_2)=1$.
    \item If $h^0(\Ecal_4)\neq0$ and $h^0(\RExt^1(\Ecal_4,\Ocal_S(-2)))=0$, then $h^0(\Ecal_4)=1$.
    \item If $h^0(\Ecal_2)=0$, $h^0(\RExt^1(\Ecal_2,\Ocal_S(-2)))=0$, and $\Ecal_2\in Z_{1,3}$, then there exist a unique $p\in \Supp(\Ecal_2)$ such that $\Hom(\Ical_p,\Ecal_2)\neq0$. In fact, $\mathrm{hom}(\Ical_p,\Ecal_2)=1$.
    \item If $h^0(\Ecal_8)\neq0$, $h^0(\RExt^1(\Ecal_8,\Ocal_S(-1)))=0$, and $\Ecal_8(-1)\notin Z_{1,3}$, then $h^0(\Ecal_8)\leq 2$.
\end{enumerate}
\end{cor}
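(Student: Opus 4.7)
The strategy for all four parts is parallel. When $\Ecal_i$ admits a nonzero global section, the slope inequality $\mu_H(\Ocal_C) = -3/2 < \mu_H(\Ecal_i)$ for $C = \Supp(\Ecal_i) \in |3H|$ together with Gieseker stability of $\Ecal_i$ forces any nonzero map $\Ocal_S \to \Ecal_i$ (on integral $C$) to factor as a surjection $\Ocal_S \twoheadrightarrow \Ocal_C$ followed by an injection $\Ocal_C \hookrightarrow \Ecal_i$; the cokernel has Mukai vector $(0,0,i)$, so we obtain
\[ 0 \to \Ocal_C \to \Ecal_i \to \Ocal_{\xi_i} \to 0 \]
with $\xi_i \subset C$ of length $i$. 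On smooth $C$, $\Ecal_i$ is thereby identified with the line bundle $\Ocal_C(\xi_i)$, and combining Serre duality on $S$ with adjunction $\omega_C = \Ocal_C(3H)$ one gets $h^0(\Ecal_i) = h^0(\Ocal_C(\xi_i))$, $\Hom_S(\Ical_p, \Ecal_i) = H^0(\Ocal_C(\xi_i + p))$ for $p \in C$ smooth, and each hypothesis of the form $h^0(\RExt^1(\Ecal_i, \Ocal_S(-k))) = 0$ becomes $h^1(\Ocal_C(\xi_i + (3-k)H)) = 0$.

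For (1), I will use that a smooth $C \in |3H|$ on a general $(S,H)$ with $\Pic(S) = \Z[H]$ is isomorphic via $\pi|_C$ to a smooth plane sextic $\bar C$: the numerical equality $|C \cap \iota(C)| = 18 = |C \cap R|$ together with $C \cap R \subseteq C \cap \iota(C)$ (both schemes have length $C^2 = 18$) gives $C \cap \iota(C) = C \cap R$, so $\pi|_C$ is injective, and hence (since $g(C) = 10 = p_a(\bar C)$) an isomorphism onto $\bar C$. Clifford bounds $h^0(\Ocal_C(\xi_2)) \le 2$, and smooth plane sextics are non-hyperelliptic (Clifford index $2$), so $h^0(\Ocal_C(\xi_2)) \le 1$, hence $=1$. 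For (2), the same reduction gives $h^0(\Ecal_4) = h^0(\Ocal_C(\xi_4))$, and the classical fact that smooth plane curves have Clifford dimension $1$, with only $\Ocal_C(H)$ and $\omega_C \otimes \Ocal_C(-H)$ computing the Clifford index, implies $W^1_4(\bar C) = \emptyset$, yielding $h^0 = 1$. The hypothesis $h^0(\RExt^1(\Ecal_4, \Ocal_S(-2))) = 0$ is invoked to handle the singular cubics $C \in \Sigma$, where the filtrations of $\Ocal_C$ from Section~\ref{3H} and the Franciosi--Tenni extension of Clifford's theorem replace the line-bundle reduction.

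For (3), $\Ecal_2 \in Z_{1,3}$ is generically $\Ocal_C(D)$ with $D = p_1+p_2+p_3-p_4$ on smooth $C$, so
\[ \Hom_S(\Ical_p, \Ecal_2) = H^0(\Ocal_C(D+p)) \quad (p \in C \text{ smooth}), \]
and $\Hom_S(\Ical_p, \Ecal_2) = 0$ for $p \notin C$. Setting $p = p_4$ gives $H^0(\Ocal_C(p_1+p_2+p_3))$, which is effective and hence nonzero; for the uniqueness of $p$ and the bound $\hom = 1$, I will argue that the Abel--Jacobi image of $C$ in $\Pic^1(C)$ meets the $(-D)$-translate of $W^0_3(C) \subset \Pic^3(C)$ in a single reduced point — a transversality statement made possible by the hypotheses $h^0(\Ecal_2) = 0$ and $h^0(\RExt^1(\Ecal_2, \Ocal_S(-2))) = 0$, which exclude the degenerate configurations (such as $p_1+p_2+p_3$ moving in a $\mathfrak{g}^1_3$) that would enlarge the intersection.

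For (4), the hypothesis becomes $h^1(\Ocal_C(\xi_8 + H)) = 0$, so Riemann--Roch on $C$ gives $h^0(\Ocal_C(\xi_8 + H)) = 5$. A base-point-free pencil $V \subset H^0(\Ocal_C(H))$ exists since $|H|_C = \mathfrak{g}^2_6$ is the pullback of $\Ocal_{\Pbb^2}(1)$ via $\pi|_C$, and the base-point-free pencil trick (from $0 \to \Ocal_C(-H) \to V \otimes \Ocal_C \to \Ocal_C(H) \to 0$ twisted by $\Ocal_C(\xi_8)$) yields
\[ h^0(\Ocal_C(\xi_8 + H)) \ge 2 h^0(\Ocal_C(\xi_8)) - h^0(\Ocal_C(\xi_8 - H)). \]
Thus $h^0(\Ecal_8) \ge 3$ would force $h^0(\Ocal_C(\xi_8 - H)) \ge 1$, i.e., $\xi_8 - H$ is linearly equivalent to an effective divisor $D$ of degree $2$, so $\Ecal_8(-1) \cong \Ocal_C(D) \in Z_2 \subset Z_{1,3}$, contradicting $\Ecal_8(-1) \notin Z_{1,3}$. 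The main technical obstacle in all four parts is handling the singular cubics $C \in \Sigma$, where $\Ecal_i$ is no longer a line bundle and one must replace each step by its analogue on the non-reduced or reducible $C$ using the filtrations of $\Ocal_C$ from Section~\ref{3H} and the Franciosi--Tenni Clifford bound.
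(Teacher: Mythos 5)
Your approach---reducing each bound to a classical Brill--Noether statement for line bundles on the cubic curves themselves---is genuinely different from the paper's, which obtains all four parts in a few lines by observing that the hypotheses place $\Ecal_i$ (suitably twisted or dualized) in the exceptional locus of one of the sixth through ninth walls, and then reading off $h^0$ from the decomposition of the Mukai vector at that wall. Your route could in principle work, but as written it has a genuine gap already over the \emph{smooth} curves. Not every smooth $C\in|3H|$ is a plane sextic: your intersection-theoretic argument tacitly assumes $C\neq\iota(C)$ (for the covering involution $\iota$ of $\pi$), and the $\iota$-invariant smooth members of $|3H|$---the preimages $\pi^{-1}(E)$ of smooth plane cubics $E$ transverse to the branch sextic---form a $9$-dimensional family of genus-$10$ curves that are double covers $f\colon C\to E$ of elliptic curves. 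Such a $C$ carries infinitely many $g^1_4$'s (pull back any degree-two pencil $L_2$ on $E$), so the assertion $W^1_4(C)=\emptyset$ on which your proof of (2) rests is false there; likewise the uniqueness in (3) fails without further input, since two distinct points $p,q$ with $\Lcal(p)$ and $\Lcal(q)$ both effective produce a $g^1_4$. The corollary is still true for these curves, but only because the hypotheses $h^0(\RExt^1(\Ecal_4,\Ocal_S(-2)))=0$, resp.\ $h^0(\RExt^1(\Ecal_2,\Ocal_S(-2)))=0$, fail for the offending bundles: by relative duality $\RExt^1(\Ecal_4,\Ocal_S(-2))$ restricts on $C$ to $\Lcal_4^\vee\otimes\Ocal_C(H)=f^*\bigl(L_2^\vee\otimes\Ocal_E(1)\bigr)$, a pullback of a degree-one line bundle on $E$, which has a section. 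You invoke those hypotheses only for singular cubics, so this whole case is simply not addressed. (Parts (1) and (4) survive for such $C$: non-hyperellipticity follows from Castelnuovo--Severi, and your base-point-free pencil trick uses only the $g^2_6$ cut out by $|H|$.)

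The second gap is the singular locus $\Sigma$. The corollary is applied in Proposition \ref{BNforM} to sheaves supported on arbitrary cubics, including reducible and non-reduced ones, where $\Ecal_i$ is a rank-one torsion-free (or merely pure) sheaf rather than a line bundle, your slope/factorization argument for a section does not apply verbatim, and Franciosi--Tenni controls $h^0$ of invertible sheaves on $2$-connected curves rather than the torsion-free sheaves and the $\Hom(\Ical_p,-)$ computations that (3) requires. Deferring all of this to ``the filtrations of $\Ocal_C$ from Section 2.1'' is a placeholder, not a proof---and it is exactly the case that the paper's wall-crossing argument absorbs uniformly, since stability-condition decompositions see only Mukai vectors and not the geometry of the support. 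Finally, part (3) as you state it (``I will argue that the Abel--Jacobi image meets the translate of $W^0_3$ in a single reduced point'') is a restatement of the desired conclusion rather than an argument for it.
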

\begin{proof}
By looking at the possible decompositions of the Mukai vectors into effective classes at the walls (see \cite[Proposition 5.5]{BM14b}), one can obtain Brill-Noether type bounds (for example, see \cite[Section 3.2]{BM22}).

(1) Suppose $h^0(\Ecal_2)\neq 0$. Then $\Ecal_2(1)\in M$ is in the exceptional locus for the ninth wall. By the decomposition corresponding to the wall, $h^0(\Ecal_2)=1$.

(2) Suppose $h^0(\Ecal_4)\neq 0$ and $h^0(\RExt^1(\Ecal_4,\Ocal_S(-2)))=0$. We have $$\Hom(\RExt^1(\Ecal_4,\Ocal_S(-1)),\Ocal_S(-1)[1])\neq0$$
by the local and global Ext spectral sequence \cite[Equation (3.16)]{Huy06}. Thus $\RExt^1(\Ecal_4,\Ocal_S(-1))\in M$ is in the exceptional locus of $g_8$ in $X_9$. By the decomposition corresponding to the eighth wall, $h^0(\Ecal_4)=1$.

(3) If $h^0(\Ecal_2)=0$, $h^0(\RExt^1(\Ecal_2,\Ocal_S(-2)))=0$, and $\Ecal_2\in Z_{1,3}$, then $\Ecal_2(1)\in M$ is in the exceptional locus of $g_7$ in $X_8$. Our claim now follows from the decomposition corresponding to the seventh wall.

(4) If $h^0(\Ecal_8)\neq 0$, $h^0(\RExt^1(\Ecal_8,\Ocal_S(-1)))=0$, and $\Ecal_8(-1)\notin Z_{1,3}$, then $\Ecal_8\in M$ is in the exceptional locus of $g_6$ in $X_7$. By the decompositions of the Mukai vector corresponding to the sixth wall, $h^0(\Ecal_8)\leq 2$.
\end{proof}

\begin{cor}
Let $\Xcal_4$, $\Xcal_4^\circ$, $\Xcal_{1,3}$, $\Xcal_{1,3}^\circ$, $\Xcal_8$, and $\Xcal_8^\circ$ be as defined in Proposition \ref{BNforM}. Then 
\begin{enumerate}
    \item $\Xcal_4^\circ$ is a $\Pbb^6$-bundle over the open subset $\Ucal_4:=\{\xi_4\;|\; h^0(\Ical_{\xi_4}(1))=0\}$ in $S^{[4]}$,
    \item $\Xcal_{1,3}^\circ$ is a $\Pbb^6$-bundle over the open subset $\Ucal_{1,3}:=\{(p,\xi_3)\;|\; h^0(\Ical_{\xi_3}(1))=0 \mbox{ and } p\notin \xi_3\}$ in $S\times S^{[3]}$,
    \item $\Xcal_8^\circ$ contains an open subset which is isomorphic to a $\Pbb^2$-bundle over the open subset
      \begin{align*}
        \Ucal_8:=\left\{\xi_8\;\left|\begin{array}{c}
	 h^0(\Ical_{\xi_8}(2))=0,\mbox{ }h^1(\Ical_{\xi_8}(3))=0,\mbox{ and } \\
          h^0(\Ical_{\xi_l}(1))=0\mbox{ for any }\xi_l\subset\xi_8\mbox{ of length }5\leq l\leq 8
          \end{array}\right.\right\}
    \end{align*}
    in $S^{[8]}$.
\end{enumerate}
As a result, $Z_i$ contains an open subscheme which is a $\Pbb^{10-i}$-bundle over an open subset of $S^{[i]}$ for $i=4$ and $8$, and $Z_{1,3}$ contains an open subscheme which is a $\Pbb^6$-bundle over an open subset of $S\times S^{[3]}$. 
\end{cor}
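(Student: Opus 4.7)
The plan is to show that $h^0(\Ical_\xi(3))$ is constant on the relevant base, and then invoke cohomology and base change applied to the pushforward $p_{2*}(\Isc\otimes p_1^*\Ocal_S(3))$; this directly produces the desired projective bundle. By the defining sequence $0\to\Ical_\xi(3)\to\Ocal_S(3)\to\Ocal_\xi\to 0$ and the vanishing $h^1(\Ocal_S(3))=0$, constancy of $h^0(\Ical_\xi(3))$ is equivalent to $h^1(\Ical_\xi(3))=0$. Part (3) is then tautological: the definition of $\Ucal_8$ builds in precisely this vanishing, so $h^0(\Ical_{\xi_8}(3))=11-8=3$ on $\Ucal_8$, and the preimage of $\Ucal_8$ in $\Xcal_8^\circ$ is a $\Pbb^2$-bundle over $\Ucal_8$ by base change.

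For parts (1) and (2) the task is to verify $h^1(\Ical_\xi(3))=0$ for length-$4$ subschemes $\xi$ on $\Ucal_4$ and on $\Ucal_{1,3}$ respectively. I would pick a conic $Q\in|2H|$ through $\xi$; such a $Q$ always exists because $\chi(\Ical_\xi(2))=6-4=2$ together with the Serre-duality vanishing $h^2(\Ical_\xi(2))=h^0(\Ocal_S(-2))^\vee=0$ forces $h^0(\Ical_\xi(2))\geq 2$. The exact sequence $0\to\Ocal_S(H)\to\Ical_\xi(3)\to\Ocal_Q(3H|_Q-\xi)\to 0$ together with $h^1(\Ocal_S(H))=h^2(\Ocal_S(H))=0$ yields $h^1(\Ical_\xi(3))=h^1(\Ocal_Q(3H|_Q-\xi))$. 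When $Q$ is integral, $Q$ has arithmetic genus $5$ and $K_Q=2H|_Q$ by adjunction, so Serre duality on $Q$ identifies the previous expression with $h^0(\Ocal_Q(\xi-H|_Q))$, a degree-zero line bundle which is nontrivial iff $\xi\sim H|_Q$ on $Q$. The isomorphism $H^0(\Ocal_S(H))\xrightarrow{\sim} H^0(\Ocal_Q(H|_Q))$ coming from $0\to\Ocal_S(-H)\to\Ocal_S(H)\to\Ocal_Q(H|_Q)\to 0$ makes the effective divisors on $Q$ equivalent to $H|_Q$ exactly the cuts $L\cap Q$ for $L\in|H|$, so the obstruction reduces to $\xi\subset L$ for some line $L$. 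This is excluded by $\xi_4\in\Ucal_4$ in part (1); in part (2), $\xi=p\cup\xi_3\subset L$ would force $\xi_3\subset L$, contradicting $\xi_3\in\Ucal_3$. Thus $h^0(\Ical_\xi(3))=7$ in both cases, and cohomology and base change gives the $\Pbb^6$-bundle structure.

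The final clause of the corollary then follows immediately from Proposition \ref{BNforM}: the morphisms $F_4$, $F_{1,3}$, and a dense-open restriction of $F_8$ are injective with images in $Z_4\backslash Z_2$, $Z_{1,3}\backslash Z_4$, and $Z_8\backslash(Z_4\cup Z_{1,3})$ respectively, and they induce injections on tangent spaces, so the projective bundles just constructed descend to open subschemes of $Z_4$, $Z_{1,3}$, and $Z_8$ fibred over $S^{[4]}$, $S\times S^{[3]}$, and $S^{[8]}$. The main technical subtlety I expect is handling those $\xi\in\Ucal_4$ or $\Ucal_{1,3}$ for which every conic $Q\in|2H|$ passing through $\xi$ is non-integral: here the Serre-duality step must either be redone using the dualizing sheaf of the Gorenstein curve $Q$ (treating the unions of lines and doubled lines separately), or one must verify that the entire pencil of conics through such $\xi$ cannot remain inside the reducible locus of $|2H|$ without forcing $\xi$ onto a line.
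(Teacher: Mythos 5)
Your overall strategy (show $h^{0}(\Ical_{\xi}(3))$ is constant, then apply cohomology and base change) is sound, part (3) is correct since $h^{1}(\Ical_{\xi_8}(3))=0$ is built into $\Ucal_8$, and the final clause does follow from Proposition \ref{BNforM} as you say. But parts (1) and (2) have a genuine gap, and it is exactly the one you flag at the end without resolving. Your computation of $h^{1}(\Ical_{\xi}(3))$ via Serre duality on a conic $Q$ through $\xi$ only works when $Q$ is integral, and there are points of $\Ucal_4$ (and of $\Ucal_{1,3}$) for which \emph{every} conic through $\xi$ is non-integral: take $\xi_4=\xi_3\cup\{p\}$ with $\xi_3$ a length-three subscheme of a line $L\in|H|$ and $p\notin L$. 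Then $h^{0}(\Ical_{\xi_4}(1))=0$, so $\xi_4\in\Ucal_4$, but since $|2H|=\pi^{*}|\Ocal_{\Pbb^2}(2)|$ and a plane conic meeting a line in a length-three scheme contains that line, every conic through $\xi_4$ is of the form $L+L'$. So your second proposed remedy (the pencil cannot stay inside the reducible locus) is false. The first remedy (Gorenstein duality on reducible or non-reduced $Q$) is not carried out, and it is not routine: on a reducible $Q$ the sheaf $\Ical_{\xi/Q}(3H)$ need not be a line bundle, a nonzero map to $\omega_Q(-3H)=\Ocal_Q(-H)$ can kill a component, and the step ``a degree-zero rank-one sheaf has a section iff it is trivial'' fails, so the reduction of the obstruction to ``$\xi\subset L$ for some line'' breaks down precisely in the cases you need. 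An analogous configuration ($\xi_3$ non-collinear but with two of its points collinear with $p$) defeats part (2) in the same way.

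For contrast, the paper avoids all of this case analysis by deducing the dimension counts from the wall-crossing: on the relevant wall the two Jordan--H\"older factors (e.g.\ $\Ocal_S(-1)[1]$ and $\Ical_{\xi_4}(2)$ for the eighth wall) are $\sigma$-stable of the same phase, so $\Hom$ and $\Ext^{2}$ between them vanish and $\ext^{1}$ equals the Mukai pairing, giving $h^{0}(\Ical_{\xi_4}(3))=7$ uniformly over $\Ucal_4$; the hypothesis $h^{0}(\Ical_{\xi_4}(1))=0$ enters only to guarantee stability on the wall. If you want to keep a classical proof, you would need to replace the conic by a case-by-case choice of auxiliary curve (for instance, in the configuration above, use $0\to\Ical_{p}(2H)\to\Ical_{\xi_4}(3H)\to\Ical_{\xi_3/L}(3H)\to0$ and Serre duality on the integral curve $L$), which is doable but requires enumerating the degenerate configurations you have currently left open.
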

\begin{proof}
Recall that we have embeddings $\Xcal_i\hookrightarrow B\times S^{[i]}$ and $\Xcal_{1,3}\hookrightarrow B\times S\times S^{[3]}$.

(1) By definition $\Xcal_4^\circ=p_2^{-1}(\Ucal_4)$. It suffices to show that $h^0(\Ical_{\xi_4}(3))=7$ for any $\xi_4\in\Ucal_4$. By the analysis of the eighth wall, we see that $\Ocal_S(-1)[1]$ and $\Ical_{\xi_4}(2)$ are $\sigma'_8$-stable with the same phase. Thus
\begin{align*}
    h^0(\Ical_{\xi_4}(3))=\dim\Ext^1(\Ocal_S(-1)[1],\Ical_{\xi_4}(2))=\big(\vbf(\Ocal_S(-1)[1]),\vbf(\Ical_{\xi_4}(2))\big)=7.
\end{align*}

(2) Note that $\Xcal^\circ_{1,3}=p_{23}^{-1}(\Ucal_{1,3})$. It suffices to show that $h^0(\Ical_{\xi_3\cup \{p\}}(3))=7$ for any $(p,\xi_3)\in\Ucal_{1,3}$. By the analysis of the seventh wall, we see that $\Ical_p(1)$ and $\RHom(\Ical_{\xi_3},\Ocal_S(-2))[1]$ are $\sigma'_7$-stable with the same phase. Thus
\begin{align*}
    \dim\Ext^1(\RHom(\Ical_{\xi_3},\Ocal_S(-2))[1],\Ical_{p}(1))=\big(\vbf(\RHom(\Ical_{\xi_3},\Ocal_S(-2))[1]),\vbf(\Ical_{p}(1))\big)=7.
\end{align*}
On the other hand, we have the short exact sequence
\begin{align*}
    0\to \Ext^1(\RHom(\Ical_{\xi_3},\Ocal_S(-2))[1],\Ical_{p}(1))\to \Hom(\Ocal_S(-2),\Ical_p(1))\to \Ext^2(\Ocal_{\xi_3},\Ical_p(1))\to 0.
\end{align*}
Noting $\Ext^2(\Ocal_{\xi_3},\Ical_p(1))=\Hom(\Ical_p(1),\Ocal_{\xi_3})^*$, we see that $\Ext^1(\RHom(\Ical_{\xi_3},\Ocal_S(-2))[1],\Ical_{p}(1))$ is the subspace of cubics containing both $p$ and $\xi_3$, hence it has dimension $h^0(\Ical_{\xi_3\cup\{p\}}(3))$.

(3) Note that $p_2^{-1}(\Ucal_8)$ is an open subset of $\Xcal_8^\circ$. It suffices to show that $h^0(\Ical_{\xi_8}(3))=3$ for any $\xi_8\in\Ucal_8$. By the analysis of the sixth wall, we see that $\Ocal_S$ and $\RHom(\Ical_{\xi_8},\Ocal_S(-3))[1]$ are $\sigma'_6$-stable with the same phase (we note that the new condition $h^1(\Ical_{\xi_8}(3))=0$ is to guarantee that $\RHom(\Ical_{\xi_8},\Ocal_S(-3))[1]$ is $\sigma_6'$-stable, instead of just $\sigma_6'$-semistable). Thus
\begin{align*}
    \dim\Ext^1(\RHom(\Ical_{\xi_8},\Ocal_S(-3))[1],\Ocal_S)=\big(\vbf(\RHom(\Ical_{\xi_8},\Ocal_S(-3))[1]),\vbf(\Ocal_S)\big)=3.
\end{align*}
On the other hand, we have the short exact sequence
\begin{align*}
    0\to \Ext^1(\RHom(\Ical_{\xi_8},\Ocal_S(-3))[1],\Ocal_S)\to \Hom(\Ocal_S(-3),\Ocal_S)\to \Ext^2(\Ocal_{\xi_8},\Ocal_S)\to 0.
\end{align*}
Noting that $\Ext^2(\Ocal_{\xi_8},\Ocal_S)=\Hom(\Ocal_S,\Ocal_{\xi_8})^*$, we see that $\Ext^1(\RHom(\Ical_{\xi_8},\Ocal_S(-3))[1],\Ocal_S)$ is the subspace of cubics containing $\xi_8$, hence it has dimension $h^0(\Ical_{\xi_8}(3))$.
\end{proof}

\section{Appendix}
Here we collect some results about wall-crossings for the moduli spaces which have appeared in previous sections. 
\subsection{Walls for $S^{[2]}$}
We have $\vbf=(1,0,-1)$.
\begin{lem}\cite[Proposition 13.1]{BM14b}
Let $\tilde{H}=\theta(0,-1,0)$ and $B=\theta(-1,0,-1)$. Then
$$\mathrm{Mov}(S^{[2]})=\langle \tilde{H},\tilde{H}-B\rangle.$$
\end{lem}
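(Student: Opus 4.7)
The plan is to apply Theorem~\ref{wallcomp}(1) with $\vbf=(1,0,-1)$, which is applicable since $\vbf^2=2$, so that $\theta\colon\vbf^\perp\to\NS(S^{[2]})$ is an isomorphism and identifies the Mukai pairing on $\vbf^\perp$ with the Beauville--Bogomolov form on $\NS(S^{[2]})$. A basis of $\vbf^\perp$ is $\{(0,-H,0),(-1,0,-1)\}$, giving the generators $\tilde{H}$ and $B$ of $\NS(S^{[2]})_\R$. Direct computation in the Mukai pairing yields $(\tilde{H},\tilde{H})=2$, $(B,B)=-2$, $(\tilde{H},B)=0$, so $(\tilde{H}\pm B)^2=0$, and the rational positive cone is bounded by the two isotropic rays $\R(\tilde{H}-B)$ and $\R(\tilde{H}+B)$.

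Next I would enumerate the primitive classes $\abf=(r,cH,s)\in\Halg(S,\Z)$ satisfying the divisorial wall criterion. The case $\abf^2=-2$ with $(\vbf,\abf)=0$ forces $r=s$ and $r^2-c^2=1$, with only primitive solutions $\abf=\pm(1,0,1)$ (the Brill--Noether wall coming from $\Ocal_S$). The case $\abf^2=0$ with $|(\vbf,\abf)|=1$ forces $r-s=\pm 1$ and $rs=c^2$, with primitive solutions $\abf=\pm(1,0,0)$ or $\pm(0,0,-1)$ (the Hilbert--Chow wall); the case $|(\vbf,\abf)|=2$ yields only non-primitive multiples of the above. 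For each such $\abf$ a short calculation gives $\vbf^\perp\cap\abf^\perp=\Z\cdot(0,H,0)$, so all of the divisorial walls coincide with the single ray $\R\tilde{H}$.

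Since $\R\tilde{H}$ is the unique divisorial wall interior to the positive cone, the other boundary of $\Mov(S^{[2]})$ must lie on the boundary of the positive cone, i.e., on one of the two isotropic rays above. To decide which, I would observe that the Hilbert--Chow morphism $S^{[2]}\to S^{(2)}$ is the divisorial contraction at $\R\tilde{H}$, whose exceptional divisor is a positive multiple of $B$ (both are primitive in $\NS(S^{[2]})$, orthogonal to $\tilde{H}$, with square $-2$; the sign is fixed by identifying $B$ with $\theta$ of the destabilizing object $\Ocal_S(-1)[1]$ at the wall). A small ample perturbation $\tilde{H}-\epsilon B$ of $\tilde{H}$ then lies in the chamber spanned by $\tilde{H}$ and $\tilde{H}-B$, and hence $\Mov(S^{[2]})=\langle\tilde{H},\tilde{H}-B\rangle$.

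The main computational step is the Diophantine enumeration above, which is elementary for this very low-rank case but generalizes to the Pell-type analysis of \cite{Cat19} for larger $n$. The only subtle point is the orientation of $B$ relative to the Hilbert--Chow exceptional divisor, which is settled by matching sign conventions.
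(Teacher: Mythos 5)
The paper offers no proof of this lemma: it is quoted verbatim from \cite[Proposition 13.1]{BM14b}, which establishes the formula for $\Mov(S^{[n]})$ for all $n$ and all polarization degrees at once via the general lattice-theoretic machinery. Your proposal instead verifies the special case $n=2$, $H^2=2$ directly from the numerical criterion of Theorem~\ref{wallcomp}(1), and this is a legitimate, essentially self-contained alternative. Your computations check out: $\vbf^\perp$ has basis $(0,-H,0),(-1,0,-1)$ with Gram matrix $\mathrm{diag}(2,-2)$; the Diophantine enumeration of divisorial classes is complete (for $\abf^2=-2$, $(\vbf,\abf)=0$ one gets $r=s$, $r^2-c^2=1$, hence $\pm(1,0,1)$; for $\abf^2=0$ the conditions $rs=c^2$ with $r-s=\pm1$ force $\{r,s\}\subset\{0,\pm1\}$, and $r-s=\pm2$ gives only imprimitive classes); and each candidate satisfies $\vbf^\perp\cap\abf^\perp=\Z(0,H,0)$, so all divisorial walls collapse onto the single ray $\R\tilde H$, forcing the second boundary of $\Mov(S^{[2]})$ onto an isotropic ray. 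The one soft spot is your determination of the orientation of $B$. The parenthetical identifying $B$ with ``$\theta$ of the destabilizing object $\Ocal_S(-1)[1]$'' does not work as stated: $\vbf(\Ocal_S(-1)[1])=(-1,1,-2)$ is not in $\vbf^\perp$ (it pairs to $1$ with $\vbf$), so $\theta$ of it is not even defined without projection, and this class in fact governs the \emph{flopping} wall at $\Gamma=\tfrac23$ in Table~3, not the Hilbert--Chow wall. The fact you need --- that the half-exceptional divisor $\delta$ of the Hilbert--Chow contraction equals $+\theta(-1,0,1-n)=B$ rather than $-B$ --- is true, but should be justified either by citing the convention established in \cite[Section 13]{BM14b} or by pairing $\tilde H-\epsilon B$ against the class of a curve contracted by Hilbert--Chow (a fibre of the exceptional divisor over the diagonal), which must be positive for an ample class. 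With that sign pinned down correctly, your argument closes and recovers $\Mov(S^{[2]})=\langle\tilde H,\tilde H-B\rangle$; what the paper's citation buys instead is uniformity over all $n$, which matters for the $S^{[10]}$, $S^{[3]}$, $S^{[4]}$, and $S^{[8]}$ cases treated by the same reference.
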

The full list of walls is given in the table below.

\hspace*{3mm}
\begin{center}
    \begin{tabular}{|c|c|c|c|c|c|}
    \hline
    &&&&&\\[-6pt]
    $\Gamma$ & $\abf$ & $\abf^2$ & $(\vbf,\abf)$ & Wall& Type \\
    &&&&&\\[-6pt]
    \hline
    &&&&&\\[-6pt]
    $0$ & $(0,0,-1)$ & 0 & 1 &$x=0$ & divisorial \\
    &&&&&\\[-6pt]
    \hline
    &&&&&\\[-6pt]
    $\frac23$ & $(-1,1,-2)$ & $-2$ & 1 & $\left(x+\frac{3}{2}\right)^2+y^2=\left(\frac{\sqrt{5}}{2}\right)^2$& flop  \\
    &&&&&\\[-6pt]
    \hline
    &&&&&\\[-6pt]
    $1$ & $(-1,1,-1)$ & 0 & 0 && Lagrangian fibration \\[-6pt]
    &&&&&\\
    \hline
    \end{tabular}
    \vspace{0.2cm}
    \captionof{table}{Walls of $\mathrm{Mov}(S^{[2]})$} 
\end{center}
The only other nontrivial birational model for $S^{[2]}$ is $M(0,1,-1)$, obtained by performing a flop of $S^{[2]}$ along the locus parametrizing $\xi_2\in S^{[2]}$ through which there passes a pencil of lines. From the $M(0,1,-1)$ side, the flopping wall is given by $\left(x+\frac{1}{2}\right)^2+y^2=\left(\frac{\sqrt{5}}{2}\right)^2$.

\subsection{Walls for $S^{[3]}$} We have $\vbf=(1,0,-2)$
\begin{lem}\cite[Proposition 13.1]{BM14b}
Let $\tilde{H}=\theta(0,-1,0)$ and $B=\theta(-1,0,-2)$. Then
$$\mathrm{Mov}(S^{[3]})=\left\langle \tilde{H},\tilde{H}-\frac{1}{2}B\right\rangle.$$
\end{lem}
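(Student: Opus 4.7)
The plan is to apply Theorem~\ref{wallcomp}(1) directly with $\vbf = (1,0,-2)$, enumerate all divisorial-wall classes $\abf \in \Halg(S,\Z)$, and intersect the resulting half-spaces inside $\mathrm{NS}(S^{[3]})_\R$ using the basis $\{\tilde H, B\} = \{\theta(0,-H,0),\theta(-1,0,-2)\}$ of $\vbf^\perp$. Writing $\abf = (r', c'H, s')$ and using $H^2=2$, the relevant invariants read
\[
\abf^2 = 2(c')^2 - 2r's', \qquad (\vbf,\abf) = 2r' - s'.
\]
I would then run through the three bullet cases of Theorem~\ref{wallcomp}(1), each of which becomes a Diophantine problem in $(r',c',s')$.

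The case $\abf^2=0$, $(\vbf,\abf)=1$ forces $s' = 2r'-1$ and $(c')^2 = r'(2r'-1)$. The only small solution is $\abf = (0,0,-1)$; larger $r'$ give no new walls. Computing the pairing in the chosen basis yields $(\abf, a\tilde H+bB) = -b$, so this wall cuts out the ray spanned by $\tilde H$, which is the Hilbert--Chow boundary. The case $\abf^2=-2$, $(\vbf,\abf)=0$ forces $s'=2r'$ together with the negative Pell equation $(c')^2 - 2(r')^2 = -1$, whose fundamental solution is $(c',r') = (\pm1,1)$, giving $\abf = (1,\pm H,2)$. Taking $\abf = (1,-H,2)$, one computes
\[
\bigl((1,-H,2),\,(0,-H,0)\bigr) = 2, \qquad \bigl((1,-H,2),\,(-1,0,-2)\bigr) = 4,
\]
so $\abf^\perp\cap\vbf^\perp$ is spanned by the class with $(a,b)$ satisfying $2a+4b=0$, i.e.\ by $2\tilde H - B$, which is the ray through $\tilde H - \tfrac12 B$. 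The dual sign choice $\abf = (1,H,2)$ yields the mirror ray $2\tilde H + B$, which bounds the movable cone from the opposite side of the positive cone and does not intrude here.

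Finally, the case $\abf^2=0$, $(\vbf,\abf)=2$ gives $s' = 2r'-2$ and $(c')^2 = 2r'(r'-1)$; the solutions $r'=0$ and $r'=2$ produce classes whose associated hyperplanes either coincide with $\tilde H$ or with rays lying strictly inside the cone already cut out, contributing no new extremal ray. Higher Pell solutions such as $(c',r')=(7,5)$ in the second case fall outside the positive cone $\overline{\mathrm{Pos}}(S^{[3]})_\Q$ and therefore do not truncate the movable cone further (cf.~\cite{Cat19}). Combining the extremal rays $\tilde H$ and $\tilde H - \tfrac12 B$ and intersecting with the positive cone yields $\mathrm{Mov}(S^{[3]}) = \langle \tilde H,\,\tilde H - \tfrac12 B\rangle$.

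The main obstacle in this plan is not any single calculation but the completeness check: one must argue that the infinitely many Pell-type solutions produce walls that all lie outside $\mathrm{Mov}(S^{[3]})$, so that no further extremal ray intrudes. This is the same Pell-analysis technique invoked for $S^{[10]}$ in Section~\ref{wallcross}, and I would handle it by noting that the radii of the associated semicircles in the $(x,y)$-plane shrink below $1$, placing those walls outside the region where $\sigma_{x,y}$ defines a valid geometric stability condition on $S^{[3]}$.
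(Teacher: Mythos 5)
Your overall strategy is sound and is a genuinely different route from the paper, which gives no computation at all here: the lemma is simply quoted from \cite[Proposition 13.1]{BM14b}, where the movable cone of $S^{[n]}$ on a degree-two K3 is given by a closed formula in terms of solvability of Pell equations. Your identification of the two bounding classes is correct: $(0,0,-1)$ cuts out the ray through $\tilde H$, and $\abf=(1,-H,2)$ satisfies $\abf^2=-2$, $(\vbf,\abf)=0$ and, via $(\abf,\tilde H)=2$, $(\abf,B)=4$, cuts out the ray through $\tilde H-\tfrac12 B$.

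However, the step you yourself flag as the main obstacle --- showing that no divisorial class produces a wall with $0<\Gamma<\tfrac12$ --- is closed with two arguments that are both incorrect. First, the higher Pell solutions do \emph{not} fall outside $\overline{\mathrm{Pos}}(S^{[3]})_{\Qbb}$: since $\tilde H^2=2$, $B^2=-4$ and $(\tilde H,B)=0$, the positive cone is $\{a\tilde H+bB : 2a^2-4b^2>0\}$, i.e.\ $|b/a|<1/\sqrt2$, and for instance $\abf=(5,-7H,10)$ gives the ray $\tilde H-\tfrac{7}{10}B$ with $(\tilde H-\tfrac{7}{10}B)^2=\tfrac{1}{25}>0$, squarely inside the positive cone. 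Second, the radius of the associated semicircle in a fixed $(x,y)$-slice is irrelevant to Theorem~\ref{wallcomp}(1), which is a purely lattice-theoretic statement about $\theta(\vbf^\perp\cap\abf^\perp)$; walls of radius $<1$ are genuine walls and genuinely bound chambers (indeed the paper crosses several such walls for $\vbf'=(0,3,-1)$ by viewing them from the $M$ side), so ``radius $<1$'' cannot be used to discard a class. What actually makes the statement true is that every divisorial class yields $\Gamma=-2c'/(s'+2r')$ equal to $0$ or lying in $[\tfrac12,\tfrac{1}{\sqrt2}]$: in the family $\abf^2=-2$, $(\vbf,\abf)=0$ the Pell solutions $(c',r')=(\mp1,1),(\mp7,5),(\mp41,29),\dots$ give $\Gamma=|c'|/2|r'|=\tfrac12,\tfrac{7}{10},\tfrac{41}{58},\dots$ increasing to $\tfrac{1}{\sqrt2}$, and the isotropic families behave the same way. (Incidentally, your enumeration of the case $\abf^2=0$, $(\vbf,\abf)=1$ misses the solution $\abf=(1,\pm H,1)$, which gives a wall at $\Gamma=\tfrac23$ --- again harmless, but only because $\tfrac23\geq\tfrac12$.) You need this explicit lower-bound argument on $\Gamma$, not the positive-cone or radius claims, to make the completeness check valid.
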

To understand the wall and chamber structure in $\mathrm{Mov}(S^{[3]})$, we apply \cite[Theorem 5.7]{BM14b}. The full list of walls is given in the table below.

\hspace*{3mm}
\begin{center}
    \begin{tabular}{|c|c|c|c|c|c|}
    \hline
    &&&&&\\[-6pt]
    $\Gamma$ & $\abf$ & $\abf^2$ & $(\vbf,\abf)$ & Wall& Type \\
    &&&&&\\[-6pt]
    \hline
    &&&&&\\[-6pt]
    $0$ & $(0,0,-1)$ & 0 & 1 &$x=0$ & divisorial  \\
    &&&&&\\[-6pt]
    \hline
    &&&&&\\[-6pt]
    $\frac12$ & $(1,-1,2)$ & $-2$ & 0 &$(x+2)^2+y^2=\left(\sqrt{2}\right)^2$& divisorial  \\[-6pt]
    &&&&&\\
    \hline
    \end{tabular}
    \vspace{0.2cm}
    \captionof{table}{Walls of $\mathrm{Mov}(S^{[3]})$} 
\end{center}
As a result, $S^{[3]}$ has no other nontrivial birational models. Moreover, there are no other walls with radii larger than $1$.

\subsection{Walls for $S^{[4]}$}
We have $\vbf=(1,0,-3)$.
\begin{lem}\cite[Proposition 13.1]{BM14b}
Let $\tilde{H}=\theta(0,-1,0)$ and $B=\theta(-1,0,-3)$. Then
$$\mathrm{Mov}(S^{[4]})=\left\langle \tilde{H},\tilde{H}-\frac{1}{2}B\right\rangle.$$
\end{lem}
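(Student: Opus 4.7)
The plan is to apply the numerical criterion of Theorem~\ref{wallcomp}(1) to the Mukai vector $\vbf=(1,0,-3)$ of $S^{[4]}=M(1,0,-3)$. Writing a candidate class as $\abf=(r,cH,s)\in\Halg(S,\Z)$ and using $H^2=2$, the conditions on a divisorial-wall class reduce to one of three Diophantine equations:
\[
c^2-3r^2=-1, \qquad c^2=r(3r-1), \qquad c^2=r(3r-2),
\]
corresponding respectively to $\abf^2=-2$ with $(\vbf,\abf)=0$, and to the two isotropic cases $(\vbf,\abf)=1$ and $2$. The strategy is to enumerate integer solutions, translate each via $\theta(\vbf^\perp\cap\abf^\perp)$ into a ray in $\mathrm{NS}(S^{[4]})$, and then identify the two innermost rays as the boundary of the movable cone.

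First I would dispose of the $(-2)$-case: reducing modulo $3$ shows $c^2\equiv 2\pmod 3$ has no solution, so this case produces no wall. In the case $(\vbf,\abf)=1$, coprimality of $r$ and $3r-1$ forces any nontrivial solution to satisfy $|r|=a^2$ and $|3r-1|=b^2$, reducing to the Pell equations $b^2-3a^2=\pm 1$; the $-1$ branch is again blocked mod~$3$, while the $+1$ branch (fundamental solution $(1,2)$) yields $\abf=(-a^2,\pm abH,-3a^2-1)$, giving $\Gamma$-values $\tfrac{2ab}{6a^2+1}$ starting at $\tfrac{4}{7}$. In the case $(\vbf,\abf)=2$, after separating parity one reduces to $b^2-3a^2=-2$, whose fundamental solution $(1,1)$ produces $\abf=(1,\pm H,1)$ and whose subsequent solutions $(3,5),(11,19),\ldots$ yield $\Gamma=\tfrac{ab}{3a^2-1}\in\{\tfrac12,\tfrac{15}{26},\tfrac{209}{362},\ldots\}$, a sequence increasing monotonically toward $\tfrac{1}{\sqrt 3}$. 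The trivial contributions $(0,0,-1)$ and $(0,0,-2)$ both give the Hilbert-Chow ray $\tilde H$.

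To finish, I would translate the minimal nontrivial wall into the $(\tilde H,B)$ basis: for $\abf=(1,-H,1)$ the intersection $\vbf^\perp\cap\abf^\perp$ is spanned by $(1,-2H,3)$, which in the basis $\tilde H=\theta(0,-1,0),\ B=\theta(-1,0,-3)$ equals $2\tilde H-B$ and so contains the ray through $\tilde H-\tfrac12 B$. Since every other nontrivial wall has $\Gamma\ge\tfrac{4}{7}>\tfrac12$, the half-planes produced by Theorem~\ref{wallcomp}(1) intersect to give exactly $\mathrm{Mov}(S^{[4]})=\langle\tilde H,\tilde H-\tfrac12 B\rangle$. The hardest part will be the Pell bookkeeping required to rule out any solution giving $\Gamma\in(0,\tfrac12)$; this comes down to the mod~$3$ obstruction to $b^2-3a^2=-1$ combined with the monotonicity of the $\Gamma$-sequences coming from $b^2-3a^2\in\{1,-2\}$, whose common limit $\tfrac{1}{\sqrt 3}$ matches the isotropic boundary of the positive cone (where $q(\tilde H-\Gamma B)=2-6\Gamma^2=0$), beyond which no wall can cut into the movable cone.
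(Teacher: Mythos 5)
Your computation is correct, and it is consistent with the paper's own data: Table~5 lists $\abf=(1,-1,1)$ with $\abf^2=0$, $(\vbf,\abf)=2$ as the divisorial wall at $\Gamma=\tfrac12$, and no wall with $0<\Gamma<\tfrac12$. The difference in route is that the paper offers no argument at all here — it simply quotes \cite[Proposition 13.1]{BM14b}, which gives $\mathrm{Mov}(S^{[n]})$ in closed form for Picard-rank-one K3s — whereas you re-derive the special case $n=4$, $H^2=2$ directly from the numerical criterion of Theorem~\ref{wallcomp}(1). Your three Diophantine reductions are the right ones ($(\vbf,\abf)=-s+3r$ and $\abf^2=2c^2-2rs$ do give $c^2-3r^2=-1$, $c^2=r(3r-1)$, $c^2=r(3r-2)$), the mod~$3$ obstructions are applied correctly, and the identification $\theta(1,-2H,3)=2\tilde{H}-B$ for the wall of $(1,-H,1)$ checks out, as does the monotone convergence of the remaining $\Gamma$-values to the positive-cone boundary $\tfrac{1}{\sqrt3}$. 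What your approach buys is a self-contained verification (and, as a byproduct, the list of all divisorial walls, matching the table); what the citation buys is brevity, since Bayer--Macr\`i's Proposition~13.1 packages exactly this Pell-equation analysis once and for all. The only point worth making explicit in a write-up is why $\mathrm{Mov}$ is the chamber on the $\Gamma>0$ side of $\tilde{H}$ adjacent to the Hilbert--Chow wall (it must contain the nef cone of $S^{[4]}$ itself); you use this implicitly when you declare the two ``innermost'' rays to be the boundary.
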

The full list of walls is given in the table below.

\hspace*{3mm}
\begin{center}
    \begin{tabular}{|c|c|c|c|c|c|}
    \hline
    &&&&&\\[-6pt]
    $\Gamma$ & $\abf$ & $\abf^2$ & $(\vbf,\abf)$ & Wall& Type \\
    &&&&&\\[-6pt]
    \hline
    &&&&&\\[-6pt]
    $0$ & $(0,0,-1)$ & 0 & 1 &$x=0$ & divisorial  \\
    &&&&&\\[-6pt]
    \hline
    &&&&&\\[-6pt]
    $\frac {2}{5}$ & $(1,-1,2)$ & $-2$ & 1 & $\left(x+\frac{5}{2}\right)^2+y^2=\left(\frac{\sqrt{13}}{2}\right)^2$ & flop \\
    &&&&&\\[-6pt]
    \hline
    &&&&&\\[-6pt]
    $\frac 12$ & $(1,-1,1)$ & 0 & 2 & $(x+2)^2+y^2=1$ & divisorial  \\[-6pt]
    &&&&&\\
    \hline
    \end{tabular}
    \vspace{0.2cm}
    \captionof{table}{Walls of $\mathrm{Mov}(S^{[4]})$} 
\end{center}
As a result, there are two birational models of $S^{[4]}$. If we use ${}^{\sharp}S^{[4]}$ to denote the model not isomorphic to $S^{[4]}$, then ${}^{\sharp}S^{[4]}$ is obtained by performing a flop of $S^{[4]}$ along the locus $\{\xi_4\;|\;h^0(\Ical_{\xi_4}(1))\neq 0\}$.

\subsection{Walls for $S^{[8]}$} We let $\vbf=(1,0,-7)$.
\begin{lem}\cite[Proposition 13.1]{BM14b}
Let $\tilde{H}=\theta(0,-1,0)$ and $B=\theta(-1,0,-7)$. Then
$$\mathrm{Mov}(S^{[8]})=\left\langle \tilde{H},\tilde{H}-\frac{3}{8}B\right\rangle.$$
\end{lem}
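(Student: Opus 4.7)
The plan is to invoke \cite[Proposition 13.1]{BM14b}, whose proof proceeds via the classification of divisorial walls in Theorem~\ref{wallcomp}(1). Concretely, $\Mov(S^{[8]})$ is cut out inside the positive cone by the hyperplanes $\theta(\vbf^\perp\cap\abf^\perp)$ as $\abf\in\Halg(S,\Z)$ ranges over classes satisfying either (i) $\abf^2=-2$ with $(\vbf,\abf)=0$, or (ii) $\abf^2=0$ with $|(\vbf,\abf)|\in\{1,2\}$, where $\vbf=(1,0,-7)$. So the task reduces to finding the two classes $\abf$ that realize the extremal boundary rays, and ruling out any additional candidates in between.

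First I would identify the Hilbert--Chow boundary via $\abf=(0,0,-1)$, which is isotropic with $(\vbf,\abf)=1$, so it qualifies under (ii). Solving $\vbf^\perp\cap\abf^\perp$ in $\Halg(S,\Z)_\Qbb$ forces the middle component to be the only free parameter, giving the line $\Qbb\cdot(0,-1,0)$; under $\theta$ this becomes the ray through $\tilde H$.

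For the other boundary I would try $\abf=(1,-3H,9)$: a short computation gives $\abf^2=2\cdot 9-2\cdot 9=0$ and $(\vbf,\abf)=-9+7=-2$, so $\abf$ qualifies under (ii). Imposing $(\vbf,\mathbf{w})=0$ and $(\abf,\mathbf{w})=0$ on a general $\mathbf{w}=(r,cH,s)\in\Halg(S,\Z)_\Qbb$ yields $s=7r$ and $6c+16r=0$, so $\mathbf{w}\in\Qbb\cdot(3,-8H,21)$. Under $\theta$ this becomes $8\tilde H-3B$, whose ray is exactly $\tilde H-\tfrac{3}{8}B$.

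The main obstacle is the final step: verifying that no other $\abf$ in families (i) or (ii) produces a ray strictly between $\tilde H$ and $\tilde H-\tfrac{3}{8}B$. This reduces to Pell-type Diophantine analysis: case (ii) forces $c^2=rs$ combined with $|7r-s|\in\{1,2\}$, and case (i) forces $c^2-rs=-1$ combined with $s=7r$. Both are easily enumerable using the technique of \cite{Cat19} for polarized K3 surfaces of degree two; the remaining small solutions either reproduce the two rays already found (up to sign) or correspond to classes whose numerics satisfy the conditions of Theorem~\ref{wallcomp}(2), and thus contribute only flopping walls interior to the movable cone rather than new divisorial boundaries.
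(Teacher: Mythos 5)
Your overall strategy is the right one and is essentially how \cite[Proposition 13.1]{BM14b} is proved (the paper itself offers no argument beyond the citation): list the divisorial classes of Theorem~\ref{wallcomp}(1) for $\vbf=(1,0,-7)$ and compute the rays $\theta(\vbf^\perp\cap\abf^\perp)$. Your two boundary computations check out: $(0,0,-1)$ gives the ray through $\tilde H$, and $(1,-3H,9)$ (or rather its negative $(-1,3H,-9)$, the representative with $(\vbf,\abf)=+2$ as required by the statement of Theorem~\ref{wallcomp}(1)) gives $8\tilde H-3B$. Note also that your case (i) is vacuous: $s=7r$ together with $\abf^2=-2$ gives $c^2-7r^2=-1$, which has no integer solutions since $-1$ is not a square mod $7$.

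The gap is in your final dismissal step, and it is conceptual rather than merely an omitted computation. You claim that any further solution of the divisorial criteria either reproduces the two rays already found or ``satisfies the conditions of Theorem~\ref{wallcomp}(2) and thus contributes only a flopping wall.'' That cannot happen: a class with $\abf^2=0$ and $(\vbf,\abf)\in\{1,2\}$ defines a divisorial wall by definition, never a flopping one, and such extra classes do exist. For instance $\abf=(-9,24H,-64)$ has $\abf^2=1152-1152=0$ and $(\vbf,\abf)=64-63=1$; solving $s=7r$ and $48c+127r=0$ gives the ray $\theta(48,-127H,336)=127\tilde H-48B$, i.e.\ $\Gamma=\tfrac{48}{127}$, which lies inside the positive cone since $2\cdot 127^2-14\cdot 48^2=2>0$. (It comes from the Pell equation $(14r-1)^2-28c^2=1$ underlying your case (ii) with $(\vbf,\abf)=1$; the solutions with $(\vbf,\abf)=2$ beyond $(-1,\pm 3H,-9)$, such as $(-18,48H,-128)$, give the same ray.) The correct way to finish is therefore not to reclassify these classes as flopping walls, but to verify that every such extra ray has $\Gamma$ outside the open interval $(0,\tfrac38)$ --- here $\tfrac{48}{127}>\tfrac38$ because $48\cdot 8=384>381=3\cdot 127$ --- so that the chamber of the divisorial-wall arrangement adjacent to the ample cone of $S^{[8]}$ is exactly $\langle\tilde H,\tilde H-\tfrac38 B\rangle$. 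As written, your argument would not detect a divisorial ray lying strictly between the two claimed boundaries, so this enumeration genuinely needs to be carried out.
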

The full list of walls is given in the table below.

\hspace*{3mm}
\begin{center}
    \begin{tabular}{|c|c|c|c|c|c|}
    \hline
    &&&&&\\[-6pt]
    $\Gamma$ & $\abf$ & $\abf^2$ & $(\vbf,\abf)$ & Wall & Type \\
    &&&&&\\[-6pt]
    \hline
    &&&&&\\[-6pt]
    $0$ & $(0,0,-1)$ & 0 & 1 & $x=0$ & divisorial \\
    &&&&&\\[-6pt]
    \hline
    &&&&&\\[-6pt]
    $\frac{2}{9}$ & $(1,-1,2)$ & $-2$ & 5 & $\left(x+\frac{9}{2}\right)^2+y^2=\left(\frac{\sqrt{53}}{2}\right)^2$ & flop \\
    &&&&&\\[-6pt]
    \hline
    &&&&&\\[-6pt]
    $\frac{1}{4}$ & $(1,-1,1)$ & 0 & 6 & $(x+4)^2+y^2=3^2$ & flop \\
    &&&&&\\[-6pt]
    \hline
    &&&&&\\[-6pt]
    $\frac{2}{7}$ & $(0,1,-7)$ & 2 & 7 & $\left(x+\frac{7}{2}\right)^2+y^2=\left(\frac{\sqrt{21}}{2}\right)^2$ & flop \\
    &&&&&\\[-6pt]
    \hline &&&&&\\[-6pt]
    $\frac{1}{3}$ & $(0,1,-6)$ & 2 & 6 & $(x+3)^2+y^2=\left(\sqrt{2}\right)^2$ & flop \\
    &&&&&\\[-6pt]
    \hline &&&&&\\[-6pt]
    $\frac{6}{17}$ & $(-1,3,-10)$ & $-2$ & 3 & $\left(x+\frac{17}{6}\right)^2+y^2=\left(\frac{\sqrt{37}}{6}\right)^2$ & flop \\
    &&&&&\\[-6pt]
    \hline &&&&&\\[-6pt]
    $\frac{4}{11}$ & $(1,-2,4)$ & 0 & 3 & $\left(x+\frac {11}{4}\right)^2+y^2=\left(\frac{3}{4}\right)^2$ & flop \\
    &&&&&\\[-6pt]
    \hline &&&&&\\[-6pt]
    $\frac{10}{27}$ & $(2,-5,13)$ & $-2$ & 1 & $\left(x+\frac {27}{10}\right)^2+y^2=\left(\frac{\sqrt{29}}{10}\right)^2$ & flop \\
    &&&&&\\[-6pt]
    \hline &&&&&\\[-6pt]
    $\frac{3}{8}$ & $(-1,3,-9)$ & 0 & 2 & $\left(x+\frac{8}{3}\right)^2+y^2=\left(\frac{1}{3}\right)^2$ & divisorial \\[-6pt]
    &&&&&\\
    \hline
    \end{tabular}
    \vspace{0.2cm}
    \captionof{table}{Walls of $\mathrm{Mov}(S^{[8]})$} 
\end{center}
As a result, there are eight birational models of $S^{[8]}$.

\subsection{Walls for $\vbf=(0,1,0)$} By \cite[Theorem 5.7]{BM14b}, there is no wall for $(0,1,0)$ whose radius is larger than $1$.

\subsection{Walls for $\vbf=(0,2,-2)$} By \cite[Theorem 5.3]{MZ16}, the only wall for $(0,2,-2)$ whose radius is larger than $1$ is a flopping wall given by
\begin{align*}
    \left(x+\frac{1}{2}\right)^2+y^2=\left(\frac{\sqrt{5}}{2}\right)^2.
\end{align*}

\subsection{Walls for $\vbf=(0,2,-1)$} These are computed in \cite[Section 5]{Hel21}. There are two walls with radii larger than $1$, given by
\begin{align*}
    \left(x+\frac{1}{4}\right)^2+y^2 & =\left(\frac{5}{4}\right)^2,\\
     \left(x+\frac{1}{4}\right)^2+y^2 & =\left(\frac{\sqrt{17}}{4}\right)^2.
\end{align*}
\vspace{0.2cm}

\end{document}